\title[Singular Set]{On the size of the singular set of minimizing harmonic maps into the 2-sphere\\ in dimension four and higher}
\author{Katarzyna Mazowiecka}
\address[Katarzyna Mazowiecka]{
Institute of Mathematics,%
University of Warsaw,
Banacha 2,
02-097 Warszawa, Poland
\newline
\&
Universit\'e catholique de Louvain, Institut de Recherche en Math\'ematique et Physique, Chemin du Cyclotron 2 bte L7.01.01, 134 8 Louvain-la-Neuve, Belgium}
\email{katarzyna.mazowiecka@uclouvain.be}
\author{Micha\l{} Mi\'{s}kiewicz}
\address[Micha\l{} Mi\'{s}kiewicz]{
Institute of Mathematics,
University of Warsaw,
Banacha 2,
02-097 Warszawa, Poland}
\email{m.miskiewicz@mimuw.edu.pl}
\author{Armin Schikorra}
\address[Armin Schikorra]{Department of Mathematics,
University of Pittsburgh,
301 Thackeray Hall,
Pittsburgh, PA 15260, USA}
\email{armin@pitt.edu}
\definecolor{indigo}{rgb}{0.29, 0.0, 0.51}
\newcommand{\sing}{\operatorname{sing}}
\def\eps{\varepsilon}
\def\vp{\varphi}
\def\B{{B}}
\def\N{{\mathbb N}}
\def\n{{\mathcal N}}
\def\H{{\mathcal H}}
\def\S{{\mathbb S}}
\newcommand{\cH}{\mathcal H}
\newcommand{\cB}{\mathcal B}
\newcommand{\cA}{\mathcal A}
\newcommand{\bad}{{\mathsf{Bad}}}
\newcommand{\good}{{\mathsf{Good}}}
\newtheorem{theorem}{Theorem}
\newtheorem{lemma}[theorem]{Lemma}
\newtheorem{corollary}[theorem]{Corollary}
\newtheorem{proposition}[theorem]{Proposition}
\newtheorem{remark}[theorem]{Remark}
\newtheorem{definition}[theorem]{Definition}
\def\dist{{\rm dist\,}}
\newcommand{\dd}{\,\mathrm{d}}
\newcommand{\dx}{\,dx}
\newcommand{\dhn}{\,d\mathcal{H}^{n-1}}
\newcommand{\R}{\mathbb{R}}
\newcommand{\brac}[1]{\left (#1 \right )}
\newcommand{\barint}{
\rule[.036in]{.12in}{.009in}\kern-.16in \displaystyle\int }
\newcommand{\barcal}{\mbox{$ \rule[.036in]{.11in}{.007in}\kern-.128in\int $}}
\def\mvint_#1{\mathchoice
          {\mathop{\vrule width 6pt height 3 pt depth -2.5pt
                  \kern -8pt \intop}\nolimits_{\kern -3pt #1}}%
%%%% P.S., 01/03/2001
% old definition had ...\nolimits_{#1}}
% \kern -3pt makes nicer distances between the integral sign
% and the domain of integration
%%%%
          {\mathop{\vrule width 5pt height 3 pt depth -2.6pt
                  \kern -6pt \intop}\nolimits_{#1}}%
          {\mathop{\vrule width 5pt height 3 pt depth -2.6pt
                  \kern -6pt \intop}\nolimits_{#1}}%
          {\mathop{\vrule width 5pt height 3 pt depth -2.6pt
                  \kern -6pt \intop}\nolimits_{#1}}}
\numberwithin{theorem}{section} \numberwithin{equation}{section}
\newcommand{\aleq}{\precsim}
\let\latexchi\chi
\renewcommand\chi{\@ifnextchar_\sub@chi\latexchi}
\newcommand{\sub@chi}[2]{% #1 is _, #2 is the subscript
  \@ifnextchar^{\subsup@chi{#2}}{\latexchi^{}_{#2}}%
}
\newcommand{\subsup@chi}[3]{% #1 is the subscript, #2 is ^, #3 is the superscript
  \latexchi_{#1}^{#3}%
}
\begin{document}

\begin{abstract}
We extend the results of our recent preprint \cite{MMS18} into higher dimensions $n \geq 4$.
For minimizing harmonic maps $u\in W^{1,2}(\Omega,\S^2)$ from $n$-dimensional domains into the two dimensional sphere we prove:
\begin{enumerate}
 \item An extension of Almgren and Lieb's linear law, namely \[\H^{n-3}(\sing u) \le C \int_{\partial \Omega} |\nabla_T u|^{n-1} \dhn;\] 
 \item An extension of Hardt and Lin's stability theorem, namely that the size of singular set is stable under small perturbations in $W^{1,n-1}$ norm of the boundary. 
\end{enumerate}
\end{abstract}

\sloppy

\subjclass[2010]{58E20, 35B65, 35J60, 35S05}
\maketitle
\tableofcontents
\sloppy

\section{Introduction}
% Minimizing harmonic maps between manifolds have
% %long (ago) 
% been an interesting subject of studies. 
In \cite{MMS18} we studied singularities of minimizers of the Dirichlet energy
\[
 \int_{\Omega} |\nabla u|^2 \dx \quad \mbox{among $u\in W^{1,2}(\Omega,\S^2)$ with fixed boundary data}
\]
for smooth, bounded domains $\Omega \subset \R^3$. We refer to \cite{MMS18} for an introduction and background, as well as a discussion of earlier works.

There are two main results in \cite{MMS18}. Firstly, we sharpen result by Hardt and Lin \cite{HL1989}, on the stability of singularities.

\begin{theorem}\label{th:stability3D}
Let $\Omega \subset \R^3$ be a bounded domain with smooth boundary. Let $s \in (\frac{1}{2},1]$, $p \in [2,\infty)$ and $sp=2$.

Assume that $u \in W^{1,2}(\Omega,\S^2)$ is the \emph{unique} minimizing harmonic map with boundary $u\rvert_{\partial \Omega} = \varphi \in W^{s,p}(\partial \Omega,\S^2)$. 

Then for any $\eps > 0$ there is a $\delta = \delta(\eps,\Omega,\varphi) > 0$ such that whenever $v$ is a minimizing harmonic map $v \in W^{1,2}(\Omega,\S^2)$ with trace $\psi := v \Big |_{\partial \Omega}$ close to $\varphi$, namely
\[
[\psi-\varphi]_{W^{s,p}(\partial \Omega)} \leq \delta,
\]
then $v$ has the same number of singularities as $u$. Moreover
\[
\|u - v\|_{W^{1,2}(\Omega)} \leq \eps.
\]
\end{theorem}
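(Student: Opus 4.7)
\emph{Proof strategy.} The plan is a contradiction/compactness argument in the spirit of Hardt--Lin, sharpened to track the \emph{exact} count of singularities. Suppose the conclusion fails: there exist $\eps>0$, boundary data $\psi_k\in W^{s,p}(\partial\Omega,\S^2)$ with $[\psi_k-\vi]_{W^{s,p}}\to 0$, and minimizers $v_k$ with trace $\psi_k$ such that either $\|v_k-u\|_{W^{1,2}}>\eps$ or the number of singularities of $v_k$ differs from that of $u$. Because $sp=2$, the trace operator $W^{1,2}(\Omega)\to W^{s,p}(\partial\Omega)$ admits a bounded right inverse; using it I would build comparison maps with boundary $\psi_k$ and energy close to $\int_\Omega|\nabla u|^2$. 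Minimality of $v_k$ then gives a uniform $W^{1,2}$ bound, so a subsequence converges weakly to some $v_\ast\in W^{1,2}(\Omega,\S^2)$.

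The first main step is to upgrade this to strong $W^{1,2}$ convergence and to identify the limit. Here I would apply Luckhaus' lemma near $\partial\Omega$ to interpolate between $\psi_k$ and $\vi$ with vanishing extra energy, combined with the classical fact that weak limits of minimizers with convergent boundary data are again minimizers. By the uniqueness hypothesis $v_\ast=u$, so $v_k\to u$ strongly in $W^{1,2}(\Omega)$; this already handles the norm statement and rules out the first failure mode for large $k$.

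The second, more delicate step is to match the singular sets exactly. Write $\sing u=\{a_1,\dots,a_N\}$. By Schoen--Uhlenbeck $\eps$-regularity and the strong convergence $v_k\to u$, every $v_k$ is smooth on any fixed compact set disjoint from $\sing u$ for $k$ large, so $\sing v_k$ eventually concentrates in a prescribed neighborhood of $\sing u$. For the lower bound $\#\sing v_k\ge N$ I would observe that each $a_i$ has nontrivial topological degree on a small sphere $\partial B_r(a_i)$ (minimizing tangent maps in dimension three are $\pm\id$), and this degree is preserved by strong trace convergence, forcing a singularity of $v_k$ inside $B_r(a_i)$. For the matching upper bound, suppose two distinct singularities $b_k^1,b_k^2$ of $v_k$ collapse to the same $a_i$; rescaling $v_k$ around $a_i$ at the scale $r_k:=|b_k^1-b_k^2|$ and extracting a limit (using energy monotonicity to bound Dirichlet energy on growing balls) produces a nonconstant minimizing map $\R^3\to\S^2$ with at least two singularities. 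In dimension three every such minimizing tangent map is $0$-homogeneous with a single singularity at the origin -- a contradiction.

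The step I expect to be the main obstacle is the last blowup: one needs compactness of the rescaled sequence despite the singular reference point, which requires combining monotonicity of the normalized energy with the strong convergence $v_k\to u$ near $a_i$ to rule out energy concentration on other scales. This is also the step whose extension to $n\ge 4$ is nontrivial, since in higher dimensions tangent maps themselves may carry a nontrivial singular set, which is precisely the difficulty the present paper must confront.
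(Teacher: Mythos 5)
Your proposal follows the classical Hardt--Lin/Almgren--Lieb route rather than the route this paper takes. The paper proves Theorem~\ref{th:stability3D} by specializing the higher-dimensional Theorem~\ref{th:stabilitynD} (equivalently Theorem~\ref{th:global-stability}), whose proof is built on Naber--Valtorta's quantitative stratification: the notion of $\delta$-flatness near top-dimensional singular points, the $L^2$-best approximation (Theorem~\ref{th:NabVal-best-approx}) and Rectifiable Reifenberg (Theorem~\ref{th:NabVal-reifenberg}), together with a covering by ``good'' and ``bad'' balls and the stability of $\delta$-flatness (Proposition~\ref{prop:stability-of-flatness}). Your approach instead uses compactness plus uniqueness to get $v_k\to u$ strongly, a degree argument for the lower bound on $\#\sing v_k$, and a blowup/separation argument for the upper bound --- this is much closer to the argument in \cite{MMS18} and to Almgren--Lieb's original Theorems~1.8 and~2.4. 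Both routes do reach the $n=3$ statement; the paper's machinery is more than needed in $n=3$ but is what makes the $n\ge 4$ extension work, which you correctly observe at the end.

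There is, however, one genuine gap in the upper-bound step as written. You rescale $v_k$ at the scale $r_k=|b_k^1-b_k^2|$ and extract a nonconstant minimizing map $w\colon\R^3\to\S^2$ with at least two singular points, and then invoke Brezis--Coron--Lieb to conclude that ``every such minimizing tangent map is $0$-homogeneous with a single singularity.'' But this blowup limit is \emph{not} a tangent map: you are blowing up a \emph{sequence} $v_k$ at the fixed separation scale $r_k$, not a single map at vanishing scales, and there is no a priori reason for $w$ to be homogeneous, so the BCL classification does not apply directly. (There is also a smaller point: rescaling about $a_i$ could let $(b_k^j-a_i)/r_k$ escape to infinity; one should rescale about one of the $b_k^j$.) What is actually needed is an energy-pinching argument: since the only tangent map in $n=3$ is $\mathcal{R}(x/|x|)$ with density exactly $\Theta$, one has $\theta_u(a_i,0)=\Theta$, and strong convergence $v_k\to u$ together with upper semicontinuity of the density forces $\Theta\le\theta_w(0,R)\le\Theta$ for every $R$. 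Only then does the rigidity case of the monotonicity formula give that $w$ is homogeneous, i.e.\ a tangent map, and BCL yields the contradiction. You flag ``ruling out energy concentration on other scales'' as the main obstacle, which is exactly this step, but the justification you give points to the wrong lemma; the homogeneity must be \emph{deduced} from the pinching, not assumed.
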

Theorem~\ref{th:stability3D} has previously been known under Lipschitz boundary assumption. See also \cite{Li18} for a related result. Theorem~\ref{th:stability3D} is sharp as was shown in \cite{MazowieckaStrzelecki}.

Secondly, we extended Almgren and Lieb's linear law, \cite[Theorem 2.12]{AlmgrenLieb1988}, to general trace spaces (whereas Almgren and Lieb had proved this previously for $W^{1,2}$-traces).
\begin{theorem}[Almgren and Lieb's linear law for trace spaces]\label{th:ll3D}
Let $\Omega \subset \R^3$ be a bounded domain with smooth boundary, $s \in (\frac{1}{2},1]$, $p \in (1,\infty)$ and $sp = 2$. Then there exist a constant $C = C(\Omega,s,p) > 0$ such that if $u \in W^{1,2}(\Omega,\S^2)$ is a minimizing harmonic map with trace $\varphi := u \Big |_{\partial \Omega}$, then 
\[
\mathcal{H}^0\{ \sing u \}\leq C\, \int_{\partial \Omega}\int_{\partial \Omega} \frac{|\varphi(x)-\varphi(y)|^p}{|x-y|^{2+sp}}\, dx\, dy.
\]
\end{theorem}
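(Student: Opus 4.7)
The plan is to prove Theorem~\ref{th:ll3D} by combining the classical $\eps$-regularity bound on the number of singularities by the Dirichlet energy with a carefully constructed comparison map whose energy is controlled by the critical Gagliardo seminorm of $\varphi$.

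\emph{Counting singularities by energy.} For a minimizing harmonic map $u:\Omega\to\S^2$, the monotonicity formula and Schoen--Uhlenbeck $\eps$-regularity force each interior singularity to carry a uniformly positive amount of scaled Dirichlet energy. At the critical index $sp=2$, $s>\tfrac12$, the embedding $W^{s,p}(\partial\Omega)\hookrightarrow\mathrm{VMO}(\partial\Omega)$ lets one invoke Hardt--Kinderlehrer--Lin boundary regularity, keeping $\sing u$ at a positive distance from $\partial\Omega$. A Vitali-type covering argument then yields
\[
\mathcal{H}^0(\sing u) \;\leq\; C(\Omega)\, E(u), \qquad E(u) = \int_\Omega |\nabla u|^2\,\dx.
\]

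\emph{Construction of the competitor.} The plan is to produce $\tilde u\in W^{1,2}(\Omega,\S^2)$ extending $\varphi$ with the dimensionally correct bound
\[
E(\tilde u) \;\leq\; C(\Omega,s,p)\,[\varphi]_{W^{s,p}(\partial\Omega)}^{p}.
\]
The natural strategy is a dyadic multi-scale extension: approximate $\varphi$ by mollifications $\varphi_t$ at scale $t>0$, fill a collar of $\partial\Omega$ parametrized by $t$ with these approximants, and project to $\S^2$ using the nearest-point retraction. The Besov-norm characterization of $W^{s,p}$ in terms of the $p$-moduli of continuity $\omega_p(\varphi,t)$ controls the collar contribution to $\int|\nabla\tilde u|^2$ in terms of $[\varphi]_{W^{s,p}}^{p}$ precisely when $sp=2$, matching scalings. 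The $\mathrm{VMO}$ character of $\varphi$ keeps $\varphi_t$ bounded away from $0$ in $\R^3$, so the retraction is well-defined on the collar; the interior is handled either by a constant fill (trivial topology) or by a local dipole / hedgehog construction accounting for the degree of $\varphi$.

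\emph{Conclusion and main obstacle.} Combining the two steps with minimality $E(u)\leq E(\tilde u)$ yields $\mathcal{H}^0(\sing u)\leq C(\Omega,s,p)\,[\varphi]_{W^{s,p}}^p$, as desired. The delicate part of the argument is the multi-scale construction of $\tilde u$ in the second step. A direct application of the Gagliardo trace theorem produces an $\R^3$-valued extension in $W^{s+1/p,p}(\Omega)$, but for $p>2$ there is no Sobolev embedding into $W^{1,2}(\Omega)$, so the $\S^2$-valued $W^{1,2}$-competitor must be built by hand. Moreover, the homogeneity of the target bound ($p$-th power on the right, quadratic-type energy on the left) is scale-invariant only at the critical index $sp=2$, precisely where the argument is at the edge of working; quantitative control on the zero set of any intermediate $\R^3$-valued extension, and on how the dyadic collar pieces recombine into the critical Gagliardo seminorm, is what makes the construction non-trivial.
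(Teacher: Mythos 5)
Your proposal reduces to two claims: (a) $\mathcal{H}^0(\sing u)\le C\,E(u)$, and (b) there is a competitor $\tilde u$ with $E(\tilde u)\le C[\varphi]_{W^{s,p}}^{p}$. Claim (b) is false for $p>2$ on scaling grounds: take $\varphi$ to be a small perturbation $\varphi=\Pi_{\S^2}(\mathrm{const}+\delta\psi)$ of a constant. Any $\S^2$-valued $W^{1,2}$-extension has energy comparable to $\delta^2[\psi]^2_{W^{1/2,2}}$ (the trace energy is essentially forced), while $[\varphi]^p_{W^{s,p}}\sim \delta^p[\psi]^p_{W^{s,p}}$. Since $p>2$, the inequality $\delta^2\lesssim\delta^p$ fails as $\delta\to 0$, so the energy of a competitor cannot be dominated by the $p$-th power of the $W^{s,p}$ seminorm. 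No multi-scale/dyadic construction can fix this; the best one can obtain is $E(\tilde u)\lesssim[\varphi]^2_{W^{1/2,2}}\lesssim[\varphi]^2_{W^{s,p}}$, which is the quadratic, not $p$-th, power.

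Claim (a) is also not justified as written. Schoen--Uhlenbeck $\eps$-regularity gives a lower density bound $\theta_u(x_i,0)\ge\eps_0$ at each singular point, but a Vitali argument needs pairwise disjoint balls of controlled radius, and singularities of a minimizer could a priori cluster (their mutual distances are not obviously bounded below by a fixed $\sigma$). The correct statement of this kind (Almgren--Lieb's singularity separation theorem) is itself a substantial result relying on the Brezis--Coron--Lieb classification of tangent maps and a degree-theoretic dichotomy; it is not an immediate corollary of $\eps$-regularity, and even then it produces $\mathcal{H}^0(\sing u)\lesssim 1+E(u)$ with an additive constant that must be absorbed separately. The proof in \cite{MMS18} (and its higher-dimensional analogue Theorem~\ref{th:almgren-lieb-high} in this paper) proceeds in an essentially different, local-in-scale way: a ``hot spots'' theorem (Theorem~\ref{th:hot-spots}) guarantees that around every singular point $p$ close to $\partial\Omega$ the boundary seminorm accumulates by at least a fixed $\eps$ in a suitable annulus of radius $\sim\dist(p,\partial\Omega)$; the covering lemma (Theorem~\ref{th:covering}), designed precisely to handle overlapping annuli, then counts the singularities by $\frac1\eps[\varphi]^p_{W^{s,p}}$, and the singularities far from the boundary are handled by the Naber--Valtorta interior bound and absorbed using small-data regularity. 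This is what produces the correct $p$-th-power homogeneity that the global energy comparison cannot.
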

In this work we extend these results to higher dimensional domains $\Omega \subset \R^n$ for $n \geq 3$. Let us remark that for simplicity below we will make no attempt at obtaining the estimates in trace spaces $W^{s,p}$, but stick to the Sobolev space $W^{1,p}(\partial \Omega)$. An easy combination of the arguments presented here and in \cite{MMS18} will lead to them.

Our generalization of Theorem~\ref{th:stability3D} to higher dimensional domains takes the following form, see Theorem~\ref{th:global-stability}.
\begin{theorem}\label{th:stabilitynD}
Let $\Omega \subset \R^n$, $n \geq 3$, be a bounded smooth domain. Assume that for some boundary map $\varphi \in W^{1,n-1}(\partial \Omega,\S^2)$ there is a unique minimizer $u \in W^{1,2}(\Omega,\S^2)$. 

Then, for each $\varepsilon > 0$ there is a $\delta > 0$ such that if
\begin{equation}\label{eq:stabilityn}
\| \psi - \varphi \|_{W^{1,n-1}(\partial \Omega)} < \delta
\quad \Rightarrow \quad 
d_W \left( \H^{n-3} \llcorner \sing v, \H^{n-3} \llcorner \sing u \right) < \varepsilon
\end{equation}
for any minimizer $v$ with boundary data $\psi$. 

Here, $d_W$ is the $1$-Wasserstein distance, see \eqref{eq:wasserstein}, which in particular satisfies
\[
 \left | \H^{n-3} (\sing u) - \H^{n-3} (\sing v) \right | \aleq d_W \left( \H^{n-3} \llcorner \sing u, \H^{n-3} \llcorner \sing v \right) .
\]
\end{theorem}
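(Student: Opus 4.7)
The plan is to argue by contradiction, combining compactness of minimizing harmonic maps under the uniqueness assumption with the linear law of item (1) of the abstract, and then upgrading weak-$\ast$ convergence of the singular measures $\mathcal{H}^{n-3}\llcorner \sing v_k$ to Wasserstein convergence via matching upper and lower density estimates.

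\emph{Step 1 (contradiction setup).} Suppose the conclusion fails: there exist $\varepsilon_0>0$ and a sequence $\psi_k \in W^{1,n-1}(\partial\Omega,\S^2)$ with $\|\psi_k - \varphi\|_{W^{1,n-1}(\partial\Omega)} \to 0$, together with minimizers $v_k\in W^{1,2}(\Omega,\S^2)$ with trace $\psi_k$, such that $d_W(\H^{n-3} \llcorner \sing v_k, \H^{n-3}\llcorner \sing u) \ge \varepsilon_0$.

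\emph{Step 2 (compactness and use of uniqueness).} The $W^{1,n-1}(\partial\Omega)$ bound on $\psi_k$ gives a uniform $W^{1,2}(\partial\Omega)$ bound, hence by harmonic extension a uniform bound on the Dirichlet energy of $v_k$. Applying standard strong $W^{1,2}$-compactness for sequences of minimizing harmonic maps (Luckhaus / Hardt--Lin), after passing to a subsequence $v_k \to v_\infty$ strongly in $W^{1,2}(\Omega)$, where $v_\infty$ is a minimizer with boundary datum $\varphi$. The uniqueness hypothesis forces $v_\infty = u$.

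\emph{Step 3 (uniform bound on the singular measure and weak-$\ast$ compactness).} The extension of the Almgren--Lieb linear law stated as item (1) of the abstract yields
\[
\H^{n-3}(\sing v_k) \le C \int_{\partial\Omega} |\nabla_T \psi_k|^{n-1}\dhn \le C',
\]
uniformly in $k$. Hence the Radon measures $\mu_k := \H^{n-3}\llcorner \sing v_k$ have uniformly bounded total mass on the compact set $\overline\Omega$, and, after a further subsequence, $\mu_k \xrightarrow{\ast} \nu$ for some finite Radon measure $\nu$.

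\emph{Step 4 (identification of the limit).} Upper semicontinuity of the singular set follows from the $\varepsilon$-regularity theorem of Schoen--Uhlenbeck together with the strong $W^{1,2}$-convergence $v_k \to u$: any cluster point of $\sing v_k$ lies in $\sing u$, so $\supp \nu \subset \sing u$. The matching lower bound requires the more delicate statement that no singular mass is lost in the limit. I would use the quantitative stratification of Naber--Valtorta (or Cheeger--Naber) to obtain uniform $(n-3)$-dimensional Ahlfors-regularity lower bounds: for each $x\in \sing u$ and small $r$ one has $\H^{n-3}(\sing v_k \cap B_r(x)) \ge c\,r^{n-3}$ for $k$ large, whence $\nu \ge \H^{n-3}\llcorner \sing u$. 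Combined with rectifiability upper bounds this yields $\nu = \H^{n-3}\llcorner \sing u$, so both $\supp \mu_k$ and the total masses converge to those of $\mu_u$, producing Wasserstein convergence $\mu_k \to \mu_u$ and contradicting the choice of $\varepsilon_0$.

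\emph{Main obstacle.} Steps 1--3 are essentially soft: compactness plus uniqueness plus the linear law. The substantive difficulty is Step 4, namely the lower bound $\nu \ge \H^{n-3}\llcorner \sing u$. In the three-dimensional case of \cite{MMS18} the singular set is finite and a simple topological/degree argument together with $\varepsilon$-regularity suffices to prevent singular points from disappearing; here $\sing u$ is genuinely $(n-3)$-dimensional and generically itself has singularities, so ruling out mass loss requires quantitative stratification and Ahlfors-regularity of the singular set, stable under the perturbation $\psi_k \to \varphi$. This is where the bulk of the technical work is expected to lie.
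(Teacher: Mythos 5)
Your Steps 1--2 match the paper's reduction: argue by contradiction, use strong $W^{1,2}$-compactness of minimizers (the paper's Theorem~\ref{th:bsc}), and invoke uniqueness to identify the limit as $u$. Step~3's idea of extracting a weak-$\ast$ limit of the finite measures $\mu_k=\H^{n-3}\llcorner\sing v_k$ is a reasonable reformulation; on a fixed compact set, weak-$\ast$ convergence of nonnegative measures together with mass convergence does upgrade to Wasserstein-$1$ convergence, so identifying the limit measure exactly would indeed close the argument.

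The genuine gap is Step~4, and it is worse than you acknowledge. You claim that ``quantitative stratification of Naber--Valtorta'' yields an Ahlfors lower bound $\H^{n-3}(\sing v_k\cap B_r(x))\ge c\,r^{n-3}$. That is not what Naber--Valtorta provide: their theorems give rectifiability and \emph{upper} measure bounds (Theorem~\ref{th:NabVal-general-meas-bound}), not lower density bounds, which are false for general targets. Lower bounds on $\H^{n-3}(\sing v_k\cap B_r)$ in the $\S^2$-valued setting are a genuinely \emph{topological} phenomenon: they come from the fact that $v_k$ restricted to a small $2$-sphere transverse to the approximate singular plane has nonzero degree, forcing singularities to persist on every nearby $2$-disk (Lemma~\ref{lem:mis-forced-sing} from \cite{Mis18}). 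And even granting a lower Ahlfors bound with some constant $c>0$, you would only conclude $\nu\ge c'\,\H^{n-3}\llcorner\sing u$ for a possibly small constant $c'$, not the exact identity $\nu=\H^{n-3}\llcorner\sing u$ that Wasserstein convergence needs. To get the sharp constant $\omega_{n-3}$ on both sides, the paper introduces the $\delta$-flatness condition (Definition~\ref{df:mis-flatness}), proves a \emph{local} two-sided estimate $(1\pm\eps)\omega_{n-3}$ on good balls (Lemma~\ref{lem:local-stability}) by combining the degree argument with the Rectifiable Reifenberg Theorem~\ref{th:NabVal-reifenberg} and the $L^2$-best-approximation Theorem~\ref{th:NabVal-best-approx}, shows that $\delta$-flatness is stable under $W^{1,2}$-perturbation (Proposition~\ref{prop:stability-of-flatness}), and then covers $\sing u$ by good and bad balls (Lemma~\ref{lem:easy-covering}) rather than passing through a weak-$\ast$ limit. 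Without the $\delta$-flatness machinery and the sharp $(1\pm\eps)\omega_{n-3}$ density control, the limit measure cannot be identified, and the proof does not close.
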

For $n=3$ Theorem~\ref{th:stabilitynD} implies indeed Theorem~\ref{th:stability3D}: in three-dimensional domains the singular set is locally finite and any continuous map into integers is constant. In this sense, Theorem~\ref{th:stabilitynD} is the natural extension of Theorem~\ref{th:stability3D} to higher dimensions.

We also obtain a generalization of Theorem~\ref{th:ll3D} which takes the following form. See Theorem~\ref{th:almgren-lieb-high}.

\begin{theorem}\label{th:intro:al}
Let $u:\Omega\to \S^2$ be a minimizing map with $u\rvert_{\partial \Omega} = \varphi$, $\varphi \colon \partial \Omega \to \S^2$, where $\Omega\subset \R^n$ is a bounded, smooth domain. Then 
 \[
  \H^{n-3}(\sing u) \le C(n,\Omega) \int_{\partial \Omega} |\nabla \varphi|^{n-1} \dhn.
 \]
\end{theorem}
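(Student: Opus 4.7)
The plan is to reduce the $n$-dimensional estimate to the three-dimensional linear law (Theorem~\ref{th:ll3D}) via an integral-geometric slicing argument, combined with the standard topological lower bound on the Dirichlet energy of $\S^2$-valued maps at isolated degree-$\pm 1$ singularities.

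First I would recall that $\sing u$ is countably $(n-3)$-rectifiable with locally finite $\H^{n-3}$-measure, by the work of Schoen--Uhlenbeck and Simon's rectifiability theorem for the target $\S^2$. For a generic $(n-3)$-dimensional subspace $L \subset \R^n$ (chosen transverse to the approximate tangent planes of $\sing u$), I would foliate $\Omega$ by parallel affine $3$-planes $P_y = L^\perp + y$, $y \in L$. Federer's slicing formula for rectifiable sets then gives
\[
 \H^{n-3}(\sing u) \aleq \int_L \#\bigl(\sing u \cap P_y\bigr) \, dy.
\]
For a.e.\ $y$, each point of $\sing u \cap P_y$ is an isolated singularity of $u|_{P_y}$ at which the transverse tangent map is the canonical degree-$\pm 1$ hedgehog (the only non-constant minimizing tangent map $S^2 \to \S^2$). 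The topological lower bound---any map $B^3 \to \S^2$ with boundary degree $d$ has Dirichlet energy at least $8\pi|d|$---then yields
\[
 \int_{P_y \cap \Omega} |\nabla u|^2 \, d\H^3 \ge 8\pi \, \#\bigl(\sing u \cap P_y\bigr),
\]
and integrating over $y$ by Fubini combines these into $\H^{n-3}(\sing u) \aleq \int_\Omega |\nabla u|^2 \, dx$.

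Second, I would bound the interior Dirichlet energy of $u$ by $\int_{\partial \Omega} |\nabla \varphi|^{n-1}$. Since $u$ is an energy minimizer, it suffices to exhibit an admissible competitor $\tilde u \colon \Omega \to \S^2$ with trace $\varphi$ and controlled energy. Following the Almgren--Lieb paradigm, I would take the $\R^3$-valued harmonic extension $h$ of $\varphi$, define $\tilde u := h/|h|$ away from a small tubular neighborhood of the zero set $Z := \{h=0\}$, and resolve $\tilde u$ near $Z$ by the canonical degree-$\pm 1$ hedgehog in the transverse $3$-plane along the generically $(n-3)$-dimensional set $Z$. A Jacobian coarea estimate combined with $W^{1,n-1}$ trace theory for harmonic extensions controls $\H^{n-3}(Z)$ by $\int_{\partial \Omega} |\nabla \varphi|^{n-1}$, and balancing the contributions from $\{|h| > \eta\}$ against those from the singular tube (with an optimal choice of cut-off $\eta$) produces the competitor bound $\int_\Omega |\nabla \tilde u|^2 \aleq C(\Omega) \int_{\partial \Omega} |\nabla \varphi|^{n-1} \, d\H^{n-1}$.

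The main obstacle will be this sharp competitor construction: a naive harmonic-plus-projection yields only the weaker, non-scale-invariant bound $\int |\nabla \varphi|^2$, which differs from the claimed $\int |\nabla \varphi|^{n-1}$ by a fractional Hölder factor that matters in the small-data regime. Obtaining the correct scale-invariant bound requires carefully exploiting the $(n-3)$-dimensional structure of $Z$ and the dipole resolution near it; alternatively, one can combine the natural $L^2$ bound with an independent small-data regularity argument showing that $\sing u = \emptyset$ whenever $\int_{\partial \Omega}|\nabla \varphi|^{n-1}$ falls below a universal threshold, and thus closing the argument in both regimes.
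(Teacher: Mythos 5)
Your proposal differs fundamentally from the paper's argument, and Step~1 contains a gap that cannot be repaired: the intermediate inequality $\H^{n-3}(\sing u) \le C(\Omega)\int_\Omega |\nabla u|^2$ is false, and the topological lower bound you invoke does not support it. The Brezis--Coron--Lieb estimate for a $W^{1,2}(D,\S^2)$ map on a $3$-dimensional domain $D$ bounds the Dirichlet energy below by $8\pi$ times the length of a \emph{minimal connection} from the interior singularities to $\partial D$ --- not by $8\pi$ per singularity. A single degree-$\pm 1$ singularity at distance $\eps$ from $\partial D$ can carry as little as $\approx 8\pi\eps$ energy (think of a dipole with one pole pushed just outside $D$). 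Consequently, when the slicing plane $P_y$ exits $\Omega$ near $\sing u$, the per-slice inequality $\#(\sing u \cap P_y) \le \tfrac{1}{8\pi}\int_{P_y\cap\Omega}|\nabla u|^2\,d\H^3$ fails: already for $u=\Psi$ restricted to a small ball $B_\rho\subset\R^4$, the slices with $|y|$ near $\rho$ are small $3$-balls on which the energy is $\approx 8\pi\sqrt{\rho^2-y^2}$ but the singular count is $1$.

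The underlying obstruction is quantitative: boundary data $\varphi$ with scale-$r$ ``vortex'' features wrapping around a curve $\gamma\subset\partial\Omega$ of length $\ell$ forces a singular arc of comparable length at distance $\sim r$ from $\partial\Omega$, while the interior energy $\int_\Omega|\nabla u|^2\lesssim [\varphi]^2_{W^{1/2,2}(\partial\Omega)}\sim \ell r\to 0$ as $r\to 0$. So $\H^{n-3}(\sing u)$ stays bounded below while $\int_\Omega|\nabla u|^2$ vanishes, and no slicing of the interior energy can account for singularities that concentrate near $\partial\Omega$. (The theorem is still consistent with this family, since $\int_{\partial\Omega}|\nabla\varphi|^{n-1}\sim \ell/r\to\infty$.) Your two-regime salvage in Step~2 is a correct observation --- if $\int|\nabla\varphi|^{n-1}>\eps$ then indeed $\int|\nabla\varphi|^2\lesssim_{\eps,\Omega}\int|\nabla\varphi|^{n-1}$, and if $\int|\nabla\varphi|^{n-1}\le\eps$ then $\sing u=\emptyset$ --- but it is moot because Step~1 is already broken.

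The paper's proof takes a genuinely different route precisely to handle singularities near $\partial\Omega$. It covers $\{p\in\sing u : \dist(p,\partial\Omega)\le\sigma\}$ by balls $B_{r(p)}(p)$ with $r(p)=\tfrac12\dist(p,\partial\Omega)$, uses the hot-spots theorem (Theorem~\ref{th:hot-spots}) to show each such ball forces a fixed amount of $\int|\nabla_T\varphi|^{n-1}$ onto a nearby boundary annulus, counts the balls via the Almgren--Lieb covering lemma (Theorem~\ref{th:covering}), and then bounds the singular measure inside each ball by the Naber--Valtorta estimate $\H^{n-3}(\sing u\cap B_r)\le Cr^{n-3}$ (Corollary~\ref{co:NabVal-meas-bound}); the far-from-boundary part is handled by the same interior estimate together with the small-data regularity Theorem~\ref{th:int-regularity-in-terms-of-bdry}. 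The key point is that the packing argument is governed directly by the $W^{1,n-1}$ concentration of $\varphi$ at all scales near $\partial\Omega$, not by the interior Dirichlet energy.
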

In the case of Theorem~\ref{th:intro:al} it is possible to replace $\S^2$ by any simply connected target manifold $\n$, since then Theorem~\ref{th:extensionthm} still applies (see \cite[Theorem 6.2]{HL1987}). 

As for Theorem~\ref{th:stabilitynD} the situation is more involved as our argument relies also on the classification of tangent maps. For the case when the target manifold is $\S^2$ this was obtained in \cite[Theorem 1.2]{BCL1986}. 
Such a classification is also known for targets $\S^3$, see \cite{Nakajima,LinWang2006}. In this case $\dim_{\mathcal{H}} \sing u \leq n-4$,  \cite{SU84}, thus it should be possible to extend Theorem~\ref{th:stabilitynD} in this case to an estimate of $\mathcal{H}^{n-4}(\sing u)$ following the spirit of our argument. 

Another, very challenging, question is whether the $W^{1,n-1}(\partial \Omega)$-condition in Theorem~\ref{th:intro:al} can be improved. Technically, this condition controls each singularity close to the boundary of $\Omega$ -- but only $(n-3)$-dimensional singularities appear in the estimate. So one might be tempted to believe that a $W^{1,2}$-bound is sufficient in \eqref{eq:mainestimate}. On the other hand, one might also be able to analyze each stratum of the singular set via a different Sobolev norm along the boundary.

While the proofs of the theorems above are in spirit very similar to the arguments in \cite{MMS18}, there is one main new ingredient: the following (interior) analysis of the singular set of harmonic maps by Naber and Valtorta \cite{NabVal17}. 
\begin{theorem}[{{\cite[Theorem~1.6]{NabVal17}}}]
\label{th:NabVal-general-meas-bound}
For $n \geq 3$, let $u \colon B_{2r}(x) \to \n$ be energy minimizing and \[r^{2-n} \int_{B_{2r}(x)} |\nabla u|^2 \le \Lambda.\] Then there exists a constant $C=C(n,\n,\Lambda) > 0$ such that \[\H^{n-3}(\sing u \cap B_r(x)) \le C r^{n-3}.\]
\end{theorem}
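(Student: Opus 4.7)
The plan is to follow the quantitative stratification strategy of Naber--Valtorta (building on Cheeger--Naber). By the scaling invariance of the Dirichlet energy, after replacing $u$ with $\tilde u(y) = u(x + r y)$ I may assume $x = 0$ and $r = 1$, so the hypothesis becomes $\int_{B_2} |\nabla u|^2 \le \Lambda$ and the conclusion reduces to $\H^{n-3}(\sing u \cap B_1) \le C$. The two starting points are Almgren's monotonicity formula---$\theta_u(y,s) := s^{2-n}\int_{B_s(y)}|\nabla u|^2$ is non-decreasing in $s$ for minimizers and constant in $s$ iff $u$ is $0$-homogeneous around $y$---and Schoen--Uhlenbeck's $\varepsilon$-regularity, which supplies an $\varepsilon_0 = \varepsilon_0(n,\n) > 0$ such that $\theta_u(y,s) \le \varepsilon_0$ forces $u$ to be smooth on $B_{s/2}(y)$. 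In particular every singular point carries definite normalized energy at every scale.

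Next I would introduce the quantitative stratum $S^{n-3}_{\eta,r}(u) \subset B_1$ consisting of those $y$ such that $u$ is \emph{not} $(\eta, n-2)$-symmetric on $B_s(y)$ for any $s \in [r,1]$, meaning not $L^2$-close on $B_s(y)$ to a map that is $0$-homogeneous and invariant under translations along some $(n-2)$-dimensional subspace. Since 2-dimensional minimizing harmonic maps into any target are smooth (Morrey, Schoen--Uhlenbeck), any $(n-2)$-symmetric minimizing tangent map must be constant; combined with $\varepsilon$-regularity this yields an $\eta = \eta(n,\n,\Lambda) > 0$ with $\sing u \cap B_1 \subset \bigcap_{r>0} S^{n-3}_{\eta,r}(u)$.

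The core step is the Naber--Valtorta Minkowski estimate
\[
|B_r(S^{n-3}_{\eta,r}(u)) \cap B_1| \le C(n,\n,\Lambda,\eta)\, r^3 \qquad \text{for all } 0 < r \le 1.
\]
This is proved by combining an $L^2$-best approximation lemma with a discrete Reifenberg-type covering theorem. The approximation lemma asserts that if a finite measure $\mu$ is supported on points where the dyadic monotonicity drops $\theta_u(\cdot, 2s) - \theta_u(\cdot, s)$ are small in a summed sense, then the second moment of $\mu$ about the best-fitting $(n-3)$-affine subspace is controlled by the total pinching. The discrete Reifenberg theorem then converts this $L^2$-flatness at every scale into a covering of $S^{n-3}_{\eta,r}$ by at most $C r^{-(n-3)}$ balls of radius $r$. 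Because $\theta_u$ is bounded by $\Lambda$ and monotone, the summed pinching along any dyadic chain telescopes and is globally controlled. Passing from Minkowski content to $\H^{n-3}(\sing u \cap B_1) \le C$ is then standard bookkeeping: cover by $\le C r^{-(n-3)}$ balls of radius $r$, sum $(n-3)$-volumes, and let $r \to 0$.

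The principal obstacle is the $L^2$-best approximation inequality itself. It must extract, from the \emph{integrated} smallness of monotonicity defects along a whole chain of scales, a rigid geometric statement---that the support of $\mu$ clusters near a single $(n-3)$-plane in a quantitative $L^2$ sense. This relies on the quantitative rigidity of Almgren's monotonicity (a small drop $\theta_u(y,2s) - \theta_u(y,s)$ forces $u$ to be almost $0$-homogeneous about $y$ on $B_s(y)$, with quadratic control of the radial derivative), coupled with a Hilbert-space orthogonality argument relating the covariance tensor of $\mu$ to the invariance directions of any nearly-homogeneous minimizer. Once this analytic input is in hand, the subsequent discrete Reifenberg machinery and covering bookkeeping are purely measure-theoretic.
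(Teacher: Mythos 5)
The paper does not prove Theorem~\ref{th:NabVal-general-meas-bound}; it imports it verbatim from Naber--Valtorta~\cite[Theorem~1.6]{NabVal17} as one of its main new ingredients, so there is no ``paper's own proof'' to compare against. Your sketch is a faithful high-level outline of the Naber--Valtorta argument: after rescaling, you invoke monotonicity and $\varepsilon$-regularity, pass to the quantitative stratum $S^{n-3}_{\eta,r}$ (the correct statement that any $(n-2)$-symmetric minimizing tangent map is constant, hence $\sing u$ is quantitatively $(n-3)$-stratified), establish the uniform Minkowski-type bound via the $L^2$-best-approximation lemma plus the discrete/rectifiable Reifenberg theorem, and pass to a Hausdorff-measure bound by covering and letting $r\to0$. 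This matches what the paper itself records in its quotations of the two key technical inputs from~\cite{NabVal17}, namely Theorems~\ref{th:NabVal-reifenberg} (Rectifiable Reifenberg) and~\ref{th:NabVal-best-approx} ($L^2$-best approximation). In other words, your proposal is consistent with the source of the cited theorem, and nothing in it conflicts with how the paper uses the result.
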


Results on the analysis of singular sets were also considered in \cite{M03,KM07}. 

\textbf{Notation.}
We denote by $B_r(x)$ the ball centered in $x$ with radius $r$. By $\R^n_+ = \R^n\cap\{x\in\R^n: x_n > 0\}$ we denote the upper half-space. $B^+_r(x)$ is given by $B^+_r(x) = B_r(x) \cap \R^n_+$. For any $\rho>0$ we write $T_\rho = B_\rho \cap \{x\in\R^n:x_n=0\}$ for the flat part and $S_\rho^+ = \partial B_\rho \cap \R^n_+$ for the curved part of the boundary of the half ball $B_\rho^+$.

For simplicity we use Greek letters $\psi, \varphi,$ etc. for boundary maps and $u$, $v$, etc. for interior maps. The letters $r, R, \rho$ will be usually reserved for the radii. We use $\nabla_T u$ for the tangential gradient of $u$, i.e., the gradient of its restriction $u |_{\partial \Omega}$.  As usual, the constant $C$ will denote a generic constant that may vary from line to line.

Throughout the paper the term \emph{minimizer} or \emph{energy minimizer} will refer to an $\S^2$-valued map minimizing the Dirichlet energy among $W^{1,2}(\Omega,\S^2)$ maps with same boundary data, unless otherwise stated.

\textbf{Acknowledgments.}
The authors would like to thank Pawe\l{} Strzelecki for suggesting extending the results of \cite{AlmgrenLieb1988} to higher dimensions. 

Financial support is acknowledged as follows
\begin{itemize}
\item National Science Centre Poland via grant no. 2015/17/N/ST1/02360 (KM)
\item National Science Centre Poland via grant no. 2016/21/B/ST1/03138 and Etiuda scholarship no. 2018/28/T/ST1/00117 (MM)
\item German Research Foundation (DFG) through grant no.~SCHI-1257-3-1 (KM, AS)
\item Daimler and Benz foundation, grant no 32-11/16 (KM,AS) 
\item Simons foundation, grant no 579261 (AS)
%\item Etiuda scholarship no. 2018/28/T/ST1/00117 (MM) 
\item Mandat d'Impulsion scientifique (MIS) F.452317 - FNRS (KM)
\end{itemize}

\section{Tangent Maps and the interior estimates by Naber-Valtorta}
The crucial new ingredient in comparison to the original work by Almgren and Lieb \cite{AlmgrenLieb1988} are interior estimates on singular sets recently obtained by Naber and Valtorta \cite{NabVal17}.

\subsection{Tangent maps}\label{s:tangentmaps}
In this subsection we recall various facts concerning tangent maps which will be useful for future purposes. For more details we refer the interested reader to \cite[Chapter 3]{Simon1996}.

We start with the monotonicity formula see \cite[Lemma 2.4]{SU1}, \cite[Lemma 4.1]{HL1987}, or \cite[Section 2.4]{Simon1996}.

\begin{theorem}[Monotonicity formula]\label{th:monotonicityformula}
Let $\Omega \subset \R^n$ and let $u\in W^{1,2}(\Omega, \S^2)$ be a minimizing harmonic map. Then for any $0<r<R<\dist(y,\partial \Omega)$ 
 \begin{equation}\label{eq:monotonicityformula}
  R^{2-n}\int_{B_R(y)}|\nabla u|^2\dx - r^{2-n}\int_{B_r(y)}|\nabla u|^2 \dx = 2\int_{B_R(y)\setminus B_r(y)}|x-y|^{2-n}\left|\frac{\partial u}{\partial \nu}\right|^2 \dx,
  \end{equation}
  where $\frac{\partial u}{\partial \nu}$ is the directional derivative in the radial direction $\frac{x-y}{|x-y|}$.
\end{theorem}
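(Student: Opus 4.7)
The plan is to derive the monotonicity formula from the \emph{inner variation} (domain variation) identity that minimizing harmonic maps satisfy. Without loss of generality, I would translate so that $y=0$, fix $0 < r < R < \dist(0,\partial \Omega)$, and study the function
\[
 f(\rho) := \rho^{2-n} \int_{B_\rho} |\nabla u|^2 \dx, \qquad r \le \rho \le R.
\]
The goal is to show that $f$ is nondecreasing and that
\[
 f'(\rho) = 2 \rho^{2-n} \int_{\partial B_\rho} \left|\tfrac{\partial u}{\partial \nu}\right|^2 \dhn,
\]
from which \eqref{eq:monotonicityformula} follows immediately by integrating from $r$ to $R$ and applying the coarea formula (or Fubini in radial coordinates).

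First I would recall that because $u$ minimizes the Dirichlet energy, it is in particular stationary with respect to variations of the domain: for every compactly supported vector field $\eta \in C_c^\infty(\Omega,\R^n)$, the family $u_t(x) = u(x + t\eta(x))$ has $\tfrac{d}{dt}\big|_{t=0} \int |\nabla u_t|^2 = 0$, which yields the identity
\[
 \int_\Omega \Bigl( |\nabla u|^2 \divv \eta - 2 \sum_{i,j} \partial_j \eta^i \, \langle \partial_i u, \partial_j u\rangle \Bigr) \dx = 0.
\]
I would then test this against the radial field $\eta(x) = \phi(|x|)\, x$ for a smooth nonincreasing cutoff $\phi$ supported in $[0,R]$. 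A direct computation gives $\divv \eta = n\phi + |x|\phi'(|x|)$ and $\sum_{i,j} \partial_j\eta^i \langle\partial_i u,\partial_j u\rangle = \phi |\nabla u|^2 + |x|\phi'(|x|)|\partial u/\partial\nu|^2$, so the inner variation equation reduces to
\[
 \int_\Omega \bigl[(n-2)\phi(|x|) + |x|\phi'(|x|)\bigr] |\nabla u|^2 \dx = 2\int_\Omega |x|\phi'(|x|)\left|\tfrac{\partial u}{\partial \nu}\right|^2 \dx.
\]

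Next I would approximate the indicator $\mathbf{1}_{[0,\rho]}$ by such cutoffs $\phi_\epsilon$ and pass to the limit $\epsilon \to 0$; by the absolute continuity of $\rho \mapsto \int_{B_\rho}|\nabla u|^2$ and Lebesgue differentiation this is valid for almost every $\rho \in (0,R)$ and yields
\[
 (n-2)\int_{B_\rho} |\nabla u|^2 \dx - \rho\int_{\partial B_\rho}|\nabla u|^2 \dhn = -2\rho \int_{\partial B_\rho}\left|\tfrac{\partial u}{\partial \nu}\right|^2 \dhn.
\]
Comparing this with the elementary identity $f'(\rho) = (2-n)\rho^{1-n}\int_{B_\rho}|\nabla u|^2 + \rho^{2-n}\int_{\partial B_\rho}|\nabla u|^2$ gives exactly $f'(\rho) = 2\rho^{2-n} \int_{\partial B_\rho}|\partial u/\partial \nu|^2\dhn$. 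Integrating from $r$ to $R$ and rewriting the right-hand side as $2\int_{B_R \setminus B_r} |x|^{2-n} |\partial u/\partial\nu|^2\dx$ via Fubini finishes the proof.

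The main technical obstacle is the approximation step: $u$ is only in $W^{1,2}$, so the field $\eta$ that we ultimately want to test against is merely Lipschitz and the traces on spheres $\partial B_\rho$ need not be defined for every radius. I would handle this by justifying the passage to the limit for almost every $\rho$ using Fubini on $|\nabla u|^2$, deducing the identity for the locally absolutely continuous representative of $f$, and noting that the monotonicity statement \eqref{eq:monotonicityformula} is in integral form, so it survives the a.e.\ restriction on $\rho$.
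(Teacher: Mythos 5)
Your proposal is correct and coincides with the standard derivation found in the references the paper cites for this statement (Schoen--Uhlenbeck, Hardt--Lin, and Simon's lecture notes); the paper itself gives no independent proof, only those citations. Testing the inner-variation (stationarity) identity against the radial field $\phi(|x|)\,x$ and letting $\phi$ tend to an indicator is exactly how those sources obtain the equality in \eqref{eq:monotonicityformula}.
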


Let $u\in W^{1,2}(\Omega,\S^2)$ be a minimizing harmonic map, $y\in\Omega$ and $\lambda>0$. We define the rescaled maps $u_{y,\lambda}\in W^{1,2}\brac{\frac 1\lambda (\Omega - y),\S^2}$ by
\[
 u_{y,\lambda}(x) : = u(y+ \lambda x).
\]
By the monotonicity formula we know that if we let $\lambda_i\searrow 0$ then $\limsup_{i\rightarrow\infty} \int_{B_r(0)}|\nabla u_{y,\lambda_i}|^2 \dx < \infty$ for all $r>0$. Thus, by the compactness theorem (cf. Theorem~\ref{th:bsc}), we obtain a subsequence $\lambda_{i_j}$ such that 
\[
 u_{y,\lambda_{i_j}} \xrightarrow{j\rightarrow \infty} \Phi \quad \text{in } W^{1,2}_{loc}(\R^n,\S^2) 
\]
and $\Phi\in W^{1,2}(\R^n,\S^2)$ is locally a minimizing harmonic map. Moreover, also by the monotonicity formula $\Phi$ is homogeneous of degree 0 (see \cite[Lemma 2.5]{SU1}), i.e., $\Phi(x) = \Phi(\lambda x)$ for all $\lambda>0$. We call such map a \emph{tangent map to $u$ at point $y$}. 

In the case $n=3$ Simon proved uniqueness of the tangent maps \cite[Section 8]{Simon1983}, but in general if we choose a different subsequence of $\lambda_i$ the limiting map may be different (see \cite{White1992}).

For $B_r(x)\subset\Omega$ we denote the rescaled energy by
\begin{equation}\label{eq:thetau}
 \theta_u(x,r): = r^{2-n}\int_{B_r(x)} |\nabla u|^2
\end{equation}
and the energy density at $y$ by 
\[
 \theta_u(x,0) := \lim_{r\searrow 0} \theta_u(x,r) = \lim_{r\searrow 0} r^{2-n}\int_{B_r(x)} |\nabla u|^2. 
\]
For any tangent map $\Phi$ the maximum of the energy density is attained at $0\in\R^n$:
\[
 \theta_\Phi(y,0)\le \theta_\Phi(0,0) \quad \text{for any } y \in \R^n.
\]
If we assume additionally that $\theta_\Phi(y,0) = \theta_\Phi(0,0)$ then we obtain
\[
 \Phi(x+\lambda y) = \Phi(x+y) \quad \text{for any } \lambda>0 \text{ and } x\in \R^n,
\]
which leads to the definition
\[
 S(\Phi):= \{y\in\R^n \colon \theta_\Phi(y,0) = \theta_\Phi(0,0)\}.
\]
Observe that for non-constant tangent map $\Phi$ we have $S(\Phi)\subset \sing \Phi$.

We introduce the notion of $k$-symmetric maps. A map $f \colon \R^n \to \S^2$ is called $k$-symmetric if $f(\lambda x) = f(x)$ for any $x \in \R^n$, $\lambda > 0$, and there exists a linear $k$-dimensional plane $L \subset \R^n$ such that $f(x+y) = f(x)$ for any $x \in \R^n$, $y \in L$. The space of such functions will be denoted by $\mathrm{sym}_{n,k}$. 

Next we observe
\[
 y\in \sing u \Longleftrightarrow\dim S(\Phi) \le n-1 \quad \text{ for every tangent map $\Phi$ of $u$ at $y$}.
\]
We define for all $j\in \{0,\ldots,n-1\}$
\[
\begin{split}
 S_j &:= \{y\in \sing u \colon \dim S(\Phi) \le j \text{ for all tangent maps $\Phi$ of $u$ at $y$} \}\\
 &\le \{y\in \sing u \colon \text{ no tangent map of $u$ at $y$ belongs to $\mathrm{sym}_{n,j+1}$}\}.
 \end{split}
\]
% Note that
\begin{equation}
\label{eq:Sjdim}
 \dim_{\mathcal{H}} (S_j) \leq j
\end{equation}
and in particular from the regularity result $\dim_\H (\sing u) \le n-3$, see \cite[Theorem II]{SU1}. This gives us the stratification
\[
 S_0 \subset S_1 \subset \ldots \subset S_{n-4}\subset S_{n-3} = S_{n-2} = S_{n-1} = \sing u.
\]

We will be mainly interested in the top-dimensional part of the singular set, so for this purpose we define
\[
 \sing_* u = S_{n-3} \setminus S_{n-4}.
\]
% We note that $\dim_\H (\sing_*)\le n-4$. ---what did we want to write here :D 

We also recall the classification of tangent maps by Brezis--Coron--Lieb.
\begin{theorem}[{{\cite[Theorem~1.2]{BCL1986}}}]\label{th:BCLclassification}
In the case $n=3$ every nonconstant tangent map must have the form 
$\mathcal R \brac{\frac{x}{|x|}}$ for a orthogonal rotation $\mathcal R$ of $\R^3$.
\end{theorem}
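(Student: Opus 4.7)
My strategy is to reduce the problem to a classification of harmonic maps $\S^2 \to \S^2$ and then use the minimality of $\Phi$ to narrow the list down to rotations. Since $\Phi$ is $0$-homogeneous, setting $f := \Phi|_{\S^2} \colon \S^2 \to \S^2$, a direct computation in polar coordinates gives
\[
 \int_{B_\rho} |\nabla \Phi|^2 \dx = \rho \int_{\S^2} |\nabla_T f|^2 \, d\sigma,
\]
and the harmonic map equation for $\Phi$ on $\R^3 \setminus \{0\}$ reduces exactly to the harmonic map equation for $f$ on $\S^2$. Hence $f$ is a smooth harmonic map from $\S^2$ to $\S^2$, and by the classical Eells--Wood classification (or, in this low-dimensional case, a direct argument based on the fact that the Hopf quadratic differential is holomorphic and must vanish on $\S^2$), $f$ is holomorphic or anti-holomorphic in stereographic coordinates; its Dirichlet energy equals $8\pi|\deg f|$.

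Next I would use the minimality of $\Phi$ on $B_1$ to force $|\deg f| = 1$. If $|d| := |\deg f| \ge 2$, the degree-$d$ singularity at the origin can be resolved into $|d|$ well-separated degree-$\pm 1$ singularities by an explicit cut-and-paste ``dipole insertion'' construction, producing a competitor with strictly smaller Dirichlet energy and contradicting minimality. The case $\deg f = 0$ is also ruled out, because the only degree-zero harmonic map $\S^2 \to \S^2$ is constant, which would force $\Phi$ to be constant, contrary to the non-constancy hypothesis. Therefore $f$ is a M\"obius transformation of degree $\pm 1$.

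The remaining step---and, I expect, the main technical obstacle---is to upgrade ``degree $\pm 1$ M\"obius map'' to ``orthogonal rotation.'' Every degree-one M\"obius map already saturates $\int_{\S^2} |\nabla_T f|^2 = 8\pi$, so the isometric versus non-isometric distinction cannot be detected from the boundary energy alone. One must produce, for each non-isometric M\"obius $f$, an interior competitor $u \colon B_1 \to \S^2$ with $u|_{\partial B_1} = f$ and $\int_{B_1} |\nabla u|^2 < 8\pi$, contradicting the minimality of the homogeneous extension. A natural candidate is $u(x) = f_{\tau(|x|)}(x/|x|)$ along a one-parameter family $\{f_\tau\}$ of M\"obius dilations in stereographic coordinates transverse to $O(3)$, with $\tau \colon [0,1] \to \R$ chosen to minimize the resulting one-dimensional energy functional. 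Analyzing this minimization should force $\tau$ to be constant exactly when $f_0 \in O(3)$, thereby giving $\Phi(x) = \mathcal{R}(x/|x|)$ as claimed. This last reduction requires delicate energy comparisons that go beyond the topological lower bound and is where I expect the real work to lie.
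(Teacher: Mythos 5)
This is a cited result --- the paper attributes Theorem~\ref{th:BCLclassification} to Brezis--Coron--Lieb \cite{BCL1986} and gives no proof of its own, so there is nothing in the paper to compare your attempt against. Judged on its own, your outline of the first four steps correctly reconstructs the overall strategy: restrict to $f := \Phi|_{\S^2}$, invoke vanishing of the Hopf differential on $\S^2$ to get $\pm$-conformality and energy $8\pi|\deg f|$, rule out $\deg f = 0$, and rule out $|\deg f| \ge 2$ by splitting the singularity via an explicit competitor (the splitting computation is itself non-trivial but you have the right idea).

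The genuine gap is in the last step, and your proposed competitor provably cannot close it. If $\{f_\lambda\}$ is a one-parameter family of M\"obius dilations and $u(x) = f_{\tau(|x|)}(x/|x|)$, then in polar coordinates
\[
\int_{B_1}|\nabla u|^2 \dx
= \int_0^1 \int_{\S^2}|\nabla_\omega f_{\tau(r)}|^2\, d\omega\, dr
 + \int_0^1 r^2\,\tau'(r)^2 \int_{\S^2}\big|\partial_\lambda f_\lambda\big|^2_{\lambda=\tau(r)}\, d\omega\, dr.
\]
Since every $f_\lambda$ is a degree-one conformal self-map of $\S^2$, the first integrand is identically $8\pi$, so the whole right-hand side is $\ge 8\pi$ with equality only when $\tau$ is constant --- i.e., when $u$ is already the $0$-homogeneous map. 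Any competitor that keeps the singularity on the origin and merely varies the M\"obius parameter radially can never beat the tangent map. What makes the BCL argument work is precisely that the competitor \emph{moves the singular point off-center}, toward the point on $\S^2$ where the non-isometric M\"obius map concentrates its conformal factor; this breaks the radial symmetry that your ansatz preserves. So the final step is not merely ``delicate'' but requires a qualitatively different construction than the one you sketched.
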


We will use the symbol $\Psi \colon \R^n \to \S^2$ to denote the map
\begin{equation}
\label{eq:def-psi}
\R^3 \times \R^{n-3} \ni (x',x'')
\xmapsto{\quad \Psi \quad}
\frac{x'}{|x'|} \in \S^2.
\end{equation}
We note that the map $\Psi$ belongs to $\mathrm{sym}_{n,k}$ for all $k=0,1,\ldots,n-3$ but not to $\mathrm{sym}_{n,n-2}$. 
Its energy density will be denoted by 
\begin{equation}
\label{eq:def-Theta}
\Theta := \int_{B_1} |\nabla \Psi|^2 \dx.
\end{equation}

\begin{corollary}\label{co:uniquetangent}
Suppose $u\in W^{1,2}(\Omega, \S^2)$ is a minimizing harmonic map and $y\in \sing_*u$, then up to isometries of $\R^n$ the only tangent map of $u$ at $y$ is $\Psi$. In particular the density of a tangent map to $u$ at a point from $\sing_* u$ is constant.
\end{corollary}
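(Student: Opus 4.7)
The plan is to use $y\in\sing_* u$ to exhibit at least one tangent map of $u$ at $y$ isometric to $\Psi$ (via factoring and Theorem~\ref{th:BCLclassification}), and then promote this to uniqueness via a density-rigidity argument.

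Unpacking $\sing_* u = S_{n-3}\setminus S_{n-4}$ yields a tangent map $\Phi_0$ of $u$ at $y$ with $\dim S(\Phi_0)>n-4$; combined with $\dim S(\Phi_0)\le n-3$ (from $y\in S_{n-3}$) this forces equality. Setting $L:=S(\Phi_0)$ and splitting $\R^n=L^\perp\oplus L$ with $L^\perp\cong\R^3$, the definition of $S(\Phi_0)$ combined with $0$-homogeneity gives a factorization $\Phi_0(x',x'')=\varphi_0(x')$ for some $0$-homogeneous $\varphi_0\colon L^\perp\to\S^2$. A Fubini/slicing argument shows $\varphi_0$ inherits minimality: any compactly supported energy-decreasing perturbation of $\varphi_0$, tensored with a cutoff in the $L$-direction, would decrease the energy of $\Phi_0$ on a slab, contradicting minimality. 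Theorem~\ref{th:BCLclassification} then identifies $\varphi_0(x')=\mathcal R(x'/|x'|)$ for some $\mathcal R\in O(3)$, and an ambient isometry of $\R^n$ sending $L$ to $\{x_1=x_2=x_3=0\}$ and applying $\mathcal R^{-1}$ to the remaining coordinates turns $\Phi_0$ into $\Psi$. The standard identity $\theta_u(y,0)=\theta_{\Phi_0}(0,0)$ together with \eqref{eq:def-Theta} then gives $\theta_u(y,0)=\Theta$, so every tangent map $\Phi$ at $y$ has $\theta_\Phi(0,0)=\Theta$.

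For uniqueness, let $\Phi$ be an arbitrary tangent map at $y$: a $0$-homogeneous minimizer $\R^n\to\S^2$ with $\theta_\Phi(0,0)=\Theta$ and $\dim S(\Phi)\le n-3$. Monotonicity (Theorem~\ref{th:monotonicityformula}) together with $0$-homogeneity bounds $\theta_\Phi(z,0)\le\Theta$ everywhere, while the Schoen--Uhlenbeck $\varepsilon$-regularity gap at singular points (combined with the fact that $\Theta$ is the infimum of positive densities of tangent maps into $\S^2$, itself a consequence of Theorem~\ref{th:BCLclassification} via Federer dimension reduction) forces $\theta_\Phi(z,0)\ge\Theta$ for every $z\in\sing\Phi$. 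The equality case of \eqref{eq:monotonicityformula} then puts $\sing\Phi\subset S(\Phi)$, hence $\sing\Phi=S(\Phi)$; factoring in this direction and iterating forces $\dim S(\Phi)=n-3$, after which the first step applies to identify $\Phi$ with $\Psi$ up to isometry. The main technical obstacle is precisely this density-rigidity step in dimensions $\ge 4$; the rest is routine given Theorems~\ref{th:BCLclassification} and~\ref{th:monotonicityformula}.
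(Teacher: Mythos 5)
Your proposal takes a genuinely different route from the paper, and in fact you correctly spot a subtlety the paper glosses over. The paper's proof begins ``Let $\Phi$ be \emph{any} tangent map\ldots\ since $y \in \sing_*u$ it means that $\Phi$ is $(n-3)$-symmetric,'' treating $(n-3)$-symmetry of \emph{every} tangent map as definitional. But with $S_j$ defined by the ``for all tangent maps'' quantifier, $y \in \sing_*u = S_{n-3}\setminus S_{n-4}$ only yields that \emph{some} tangent map at $y$ has $\dim S(\Phi) = n-3$. You handle that correctly: extract one tangent map $\Phi_0$, factor it through $\R^3$ via $[$SU1, Lemma 5.2$]$, apply Theorem~\ref{th:BCLclassification}, and deduce $\theta_u(y,0)=\Theta$. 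That already gives the ``in particular'' clause, which is what the paper actually uses later (e.g.\ in Proposition~\ref{prop:stability-of-flatness}).

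The issue is in your uniqueness step. You argue: every tangent map $\Phi$ at $y$ has $\theta_\Phi(0,0)=\Theta$; by monotonicity $\theta_\Phi(z,0)\le\Theta$ everywhere; ``since $\Theta$ is the infimum of positive densities'' one also has $\theta_\Phi(z,0)\ge\Theta$ at singular points; hence $\sing\Phi=S(\Phi)$; and then ``iterating forces $\dim S(\Phi)=n-3$.'' This is circular at exactly the point you flag as ``the main technical obstacle.'' The statement that $\Theta$ is the minimal positive density among $0$-homogeneous minimizers $\R^n\to\S^2$ is essentially \emph{equivalent} to what you are trying to prove, and invoking Federer dimension reduction does not close the loop: dimension reduction lowers the density and increases symmetry only while there exist singular points off the spine. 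Once you have reached $\sing\Phi = S(\Phi) = L$, a $k$-plane with $k < n-3$, the map factors through some $\phi\colon\R^{n-k}\to\S^2$, $0$-homogeneous and energy-minimizing, with $\sing\phi = \{0\}$ and $n-k\ge 4$, and the iteration has already terminated. At that point neither Theorem~\ref{th:BCLclassification} (which only handles the three-dimensional slice) nor the $\eps$-regularity gap by itself tells you $k=n-3$; one needs a separate argument ruling out (or density-comparing) such isolated-singularity homogeneous minimizers in dimension $\ge 4$, and no such argument appears in your sketch. So the first half of your plan is correct and cleaner than the paper's presentation, but the uniqueness half remains an unproven assertion.
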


\begin{proof}
Let $\Phi$ be any tangent map to $u$ at $y$. By definition, since $y\in \sing_*u$ it means that $\Phi$ is $(n-3)$-symmetric, thus if $(x',x'')\in \R^3\times \R^{n-3}$ we have $\Phi(x',x'') = \Phi_0(x') = w\brac{\frac{x'}{|x'|}}$, where the last equality results from the 0-homogeneity of $\Phi$. Now, by \cite[Lemma 5.2]{SU1} we know that $\Phi_0$ is also a locally minimizing harmonic map. By Theorem~\ref{th:BCLclassification} we know that up to an orthogonal rotation $\Phi_0 = \frac{x'}{|x'|}$. See also \cite[Corollary 2.2]{HLB4}. 
\end{proof}

\subsection{Refined estimates by Naber and Valtorta}
Here we discuss the results of Naber and Valtorta \cite{NabVal17} needed in the sequel. A simplified presentation of these is available in their later article \cite{NabVal18}.
%\cite[Theorem 3.1]{NabVal-arxiv} - previous arXiv version

The main ingredient is Theorem~\ref{th:NabVal-general-meas-bound}.  
%(see \cite[Thm~1.6]{NabVal17} or \cite[Thm.~20]{NabVal18}). 
% 
% \begin{theorem}[{{\cite[Thm~1.6]{NabVal17}}}]
% %\label{th:NabVal-general-meas-bound}
% Let $u \colon B_{2r} \to \n$ be energy minimizing and $r^{2-n} \int_{B_{2r}} |\nabla u|^2 \le \Lambda$. Then there exists a constant $C(n,\n,\Lambda) > 0$ such that $\H^{n-3}(\sing u \cap B_r) \le C r^{n-3}$.
% \end{theorem}
% 
In the special case of $\n = \S^2$, uniform boundedness of minimizers, Theorem~\ref{th:uniformboundedness}, implies that the energy assumption is redundant. 

\begin{corollary}
\label{co:NabVal-meas-bound}
If $u \colon B_{2r} \to \S^2$ is energy minimizing then $\H^{n-3}(\sing u \cap B_r) \le C r^{n-3}$ with some constant $C(n) > 0$.

In particular, whenever $\Omega' \subset \subset \Omega$ and $u$ is a minimizing harmonic map on $\Omega$, then
\[
 \mathcal{H}^{n-3}(\sing u \cap \Omega') < \infty.
\]
\end{corollary}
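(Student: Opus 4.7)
The plan is to reduce Corollary~\ref{co:NabVal-meas-bound} directly to Theorem~\ref{th:NabVal-general-meas-bound} by showing that for $\S^2$-valued minimizers the rescaled energy $r^{2-n}\int_{B_{2r}(x)}|\nabla u|^2\,dx$ is automatically controlled by a constant depending only on $n$. Once such a universal $\Lambda = \Lambda(n)$ is in hand, Theorem~\ref{th:NabVal-general-meas-bound} applied with $\n = \S^2$ gives the bound $\H^{n-3}(\sing u \cap B_r) \le C(n)\, r^{n-3}$ with a constant independent of $u$.

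First, I would invoke the uniform boundedness result for $\S^2$-valued minimizers (Theorem~\ref{th:uniformboundedness}, referenced just before the statement): if $u \colon B_{2r}(x) \to \S^2$ is energy minimizing, then $r^{2-n}\int_{B_{2r}(x)}|\nabla u|^2\,dx \le \Lambda(n)$. This is the key step that distinguishes the $\S^2$ target from the general case and makes the energy hypothesis in Theorem~\ref{th:NabVal-general-meas-bound} redundant. The monotonicity formula (Theorem~\ref{th:monotonicityformula}) guarantees that this rescaled energy is a monotone nondecreasing function of the radius, so the control at scale $2r$ is indeed the right quantity to estimate. Plugging $\Lambda(n)$ into Theorem~\ref{th:NabVal-general-meas-bound} yields the first assertion.

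For the second assertion, let $\rho := \tfrac{1}{4}\dist(\Omega',\partial\Omega) > 0$, which is positive since $\Omega' \subset\subset \Omega$. Cover the compact set $\overline{\Omega'}$ by a finite family of balls $\{B_\rho(x_i)\}_{i=1}^{N}$ with $x_i \in \overline{\Omega'}$; the number $N$ depends on $\Omega'$, $\Omega$, and $n$ only. By the choice of $\rho$ each ball $B_{2\rho}(x_i)$ lies in $\Omega$, so the first part of the corollary applies on every $B_{2\rho}(x_i)$, giving $\H^{n-3}(\sing u \cap B_\rho(x_i)) \le C(n)\, \rho^{n-3}$. Summing over $i$ yields
\[
\H^{n-3}(\sing u \cap \Omega') \le \sum_{i=1}^N \H^{n-3}(\sing u \cap B_\rho(x_i)) \le N\, C(n)\, \rho^{n-3} < \infty.
\]

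The only genuinely nontrivial input is the universal energy bound furnished by Theorem~\ref{th:uniformboundedness}; everything else is a direct application of Naber--Valtorta plus a compactness-based covering. I do not anticipate any obstacle beyond citing the already-available ingredients in the correct order.
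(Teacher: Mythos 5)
Your overall plan is the right one (and is what the paper intends): invoke Theorem~\ref{th:uniformboundedness} to supply a dimensional constant $\Lambda$ and then apply Theorem~\ref{th:NabVal-general-meas-bound}. However, there is a concrete gap in your first step. You claim that for an energy minimizer $u \colon B_{2r}(x) \to \S^2$ one has $r^{2-n}\int_{B_{2r}(x)}|\nabla u|^2 \le \Lambda(n)$, i.e. a universal bound on the rescaled energy over the \emph{entire} domain ball $B_{2r}$. This does not follow from Theorem~\ref{th:uniformboundedness}: the interior uniform boundedness estimate reads $\rho^{2-n}\int_{B_\rho(0)}|\nabla u|^2 \le C\,R/(R-\rho)$ and blows up as $\rho \to R$, so it controls the rescaled energy only on balls strictly interior to the domain, never up to the boundary. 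Indeed no such universal bound on $B_{2r}$ can hold: the total energy of a minimizer on its full domain depends on the boundary data and can be made arbitrarily large. Monotonicity does not help here either, since it only propagates control from larger scales down to smaller ones; it does not create control at the top scale.

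The fix is a short covering argument. Cover $B_r(0)$ by a dimensional number $K(n)$ of balls $B_{r/4}(y_i)$ with $y_i \in B_r(0)$; then each $B_{r/2}(y_i)$ is contained in $B_{3r/2}(0) \subset\subset B_{2r}(0)$, so $u$ is energy minimizing on $B_{r/2}(y_i)$ and Theorem~\ref{th:uniformboundedness} (applied on $B_{2r}(0)$ with inner radius $3r/2$) gives $(r/4)^{2-n}\int_{B_{r/2}(y_i)}|\nabla u|^2 \le \Lambda(n)$ with an explicit dimensional constant. Theorem~\ref{th:NabVal-general-meas-bound} applied on each $B_{r/2}(y_i)$ then yields $\H^{n-3}(\sing u \cap B_{r/4}(y_i)) \le C(n)(r/4)^{n-3}$, and summing over $i$ gives $\H^{n-3}(\sing u \cap B_r) \le C'(n)\, r^{n-3}$. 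Your second assertion about $\Omega' \subset\subset \Omega$ then goes through exactly as you wrote it once the first assertion is in hand.
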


In order to prove the stability theorem, Theorem~\ref{th:stabilitynD}, one needs more subtle measure estimates. Note that for the tangent map $\Psi$, the singular set is an $(n-3)$-plane and so $\H^{n-3}(\sing \Psi \cap B_r) = \omega_{n-3} r^{n-3}$. If $u$ is close to $\Psi$, one could expect its singular set to have similar measure, see Lemma~\ref{lem:local-stability}. To this end, we will need two more results, which are essential ingredients of \cite{NabVal17}. 

To state them, we first recall the definition of Jones' height excess $\beta$-numbers. Choosing a Borel measure $\mu$ in $\R^n$, a dimension $0 < k < n$ and an exponent $p \ge 1$, we can define for each ball $B_r(x)$ 
\[
\beta_{\mu,k,p} := \inf_{L} \left( r^{-k-p} \int_{B_r(x)} \dist(y,L)^p \dd \mu (y) \right)^{1/p},
\]
where the infimum is taken over all $k$-dimensional affine planes $L \subset \R^n$. This measures how far the support of $\mu$ is from a $k$-dimensional plane (on the ball $B_r(x)$). However, we shall not work directly with this definition, but rather rely on the two theorems below, since they encompass all the geometric information we need.

The first theorem is a general geometric result that gives sharp measure estimates. 

\begin{theorem}[{Rectifiable Reifenberg {\cite[Theorem~3.3]{NabVal17}}}]
\label{th:NabVal-reifenberg}
For every $\eps > 0$ there is a $\delta = \delta(n,\eps) > 0$ such that the following holds. Let $S \subset \R^n$ be a $\H^{k}$-measurable subset and assume that for each ball $B_r(x) \subset B_2$ 
\[
\int_{B_r(x)} \int_0^r \beta_{\mu,k,2}(y,s)^2 \frac{\dd s}{s} \dd \mu(y) 
\le \delta r^{k}, 
\]
where $\mu$ denotes the measure $\H^{k}\llcorner S$. Then $\mu(B_1) \le (1+\eps) \omega_k$. 
\end{theorem}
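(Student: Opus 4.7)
The plan is to show that the Carleson-type $\beta$-number hypothesis forces the support of $\mu$ to behave, quantitatively, like a Lipschitz graph over some $k$-plane, with distortion controlled by $\delta$. Since the exact $k$-plane itself has $\mathcal{H}^k$-mass exactly $\omega_k$ in $B_1$, once the distortion is driven to zero we recover the measure bound $(1+\eps)\omega_k$. This is an ``$L^2$ Reifenberg'' philosophy: the integrated Jones numbers play the role of a square function whose smallness implies rectifiability with sharp densities, in the spirit of David--Toro and Jones' traveling salesman theorem.

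First I would set up a multi-scale best-plane framework. For each $y \in S$ and each dyadic radius $r_j = 2^{-j}$, let $L_{y,j}$ denote a $k$-plane realizing (up to a factor $2$) the infimum in $\beta_{\mu,k,2}(y,r_j)$. The key geometric lemma, under a non-degeneracy assumption $\mu(B_{r_j}(y)) \gtrsim r_j^k$, states
\[
\mathrm{dist}_H\bigl(L_{y,j} \cap B_{r_j}(y),\, L_{y,j+1} \cap B_{r_j}(y)\bigr)
\;\lesssim\; r_j \bigl( \beta_{\mu,k,2}(y,r_j) + \beta_{\mu,k,2}(y,r_{j+1}) \bigr),
\]
so the total squared tilt telescopes against the Carleson integral in the hypothesis.

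Next I would build a stopping-time tree of refinement balls. Starting from $B_1$, at each step subdivide into a Vitali family of sub-balls at half the scale; declare a sub-ball \emph{bad} if the best plane has rotated by more than a small threshold $\eta$, or if the local density jumps. On the complement (the \emph{good tree}) one assembles a Lipschitz graph $\Sigma$ over $L_{0,0}$ with Lipschitz constant $O(\eta)$ approximating $S$; orthogonal projection onto $L_{0,0}$ is bi-Lipschitz with constant $1+O(\eta^2)$, so $\mathcal{H}^k(\Sigma \cap B_1) \le (1 + C\eta^2)\omega_k$. The bad balls are packed by a Dorronsoro-type summation: each bad ball contributes at least $\eta^2$ to $\int_0^r \beta_{\mu,k,2}^2 \, \frac{ds}{s}$ integrated against $\mu$, and the hypothesis therefore forces their total $\mu$-mass to be $\lesssim \delta/\eta^2$.

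The principal obstacle is the packing estimate that couples the tree to the measure: one must show that $\mu$ itself cannot concentrate on the bad set, not merely that the bad set has small covering number. This is Naber--Valtorta's technical contribution -- they run the construction with an auxiliary measure $\mu_{\text{best}}$ supported on the successive best planes, iterate the comparison $|\mu - \mu_{\text{best}}| \lesssim \beta^2$ at each scale, and sum telescopically. Combining the graph bound with the bad-ball packing yields
\[
\mu(B_1) \;\le\; (1 + C\eta^2)\omega_k + C \delta/\eta^2,
\]
and then optimizing $\eta \sim \delta^{1/4}$ and taking $\delta$ small in terms of $\eps$ produces the claimed $\mu(B_1) \le (1+\eps)\omega_k$.
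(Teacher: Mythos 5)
The paper does not prove this statement at all: Theorem~\ref{th:NabVal-reifenberg} is imported as a black box from Naber and Valtorta, \cite[Theorem~3.3]{NabVal17}, and the authors use it directly without reproducing the argument. So there is no ``paper's proof'' to compare against; you have attempted to reconstruct the external result.

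Your sketch does capture the broad philosophy of the Naber--Valtorta argument (a Reifenberg-style iteration of best planes across dyadic scales, a Carleson packing of scales with large plane tilt, and comparison with the mass of a $k$-plane), but as written it has real gaps. The most serious one is the repeated reliance on a lower Ahlfors bound $\mu(B_r(y)) \gtrsim r^k$. Your ``key geometric lemma'' on the Hausdorff distance between $L_{y,j}$ and $L_{y,j+1}$ needs such a bound to deduce plane closeness from $L^2$ smallness, and your bad-ball packing (``each bad ball contributes at least $\eta^2$ to the Carleson integral \emph{against} $\mu$'') also implicitly assumes $\mu$ has comparable mass on bad balls. The hypothesis gives you only $\mu = \H^k \llcorner S$ for an arbitrary $\H^k$-measurable $S$ with a smallness condition on $\beta$-numbers, so the measure may have large density gaps, and the $\beta$-numbers are blind to low-density regions. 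This is precisely the technical heart of Naber--Valtorta's work: they first establish a \emph{Discrete Reifenberg} theorem for packing measures, prove an upper Ahlfors bound inductively across scales by building a sequence of approximating submanifolds $T_j$ (not merely a single Lipschitz graph) and controlling their $\H^k$-measure by a telescoping $\beta$-estimate, and only then deduce the rectifiable version. Your ``auxiliary measure $\mu_{\mathrm{best}}$'' device is not what they do, and the final optimization $\eta \sim \delta^{1/4}$ is, as presented, decorative rather than a substitute for the inductive machinery. The sketch is a reasonable signpost for the reader toward the right circle of ideas, but it is not a proof and should not be presented as one; citing \cite{NabVal17} (or the streamlined exposition in \cite{NabVal18}) is the appropriate treatment, exactly as the paper does.
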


As a side remark, let us note that in our application the set $S$ will satisfy the so-called Reifenberg condition and so one could work with the $W^{1,p}$-Reifenberg theorem \cite[Theorem~3.2]{NabVal17} instead. 

\begin{theorem}[{$L^2$-best approximation {\cite[Theorem~7.1]{NabVal17}}}]
\label{th:NabVal-best-approx}
For every $\eps > 0$ there are $\delta(n,\eps) > 0$ and $C(n,\eps) > 0$ such that the following holds. If $u \colon B_{10} \to \S^2$ is energy minimizing, 
\begin{align*}
\dist_{L^2(B_{10})} (u, \ \mathrm{sym}_{n,0}  ) & \le \delta, \\
\dist_{L^2(B_{10})} (u, \ \mathrm{sym}_{n,k+1}) & \ge \eps, 
\end{align*}
then for any finite measure $\mu$ on $B_1$ we have 
\[
\beta_{\mu,k,2} (0,1)^2
\le C \int_{B_1} \left( \theta_u(y,8) - \theta_u(y,1) \right) \dd \mu(y).
\]
\end{theorem}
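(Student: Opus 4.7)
The plan is to combine the monotonicity formula (Theorem~\ref{th:monotonicityformula}) with a principal-component analysis of the measure $\mu$, and then run a compactness-contradiction argument. By monotonicity,
\[
\theta_u(y,8) - \theta_u(y,1) = 2\int_{B_8(y)\setminus B_1(y)} |x-y|^{2-n}\left|\tfrac{\partial u}{\partial \nu_y}\right|^2 dx,
\]
so the right-hand side measures, in an integrated sense against $\mu$, the total failure of $u$ to be $0$-symmetric with respect to the base points $y\in\mathrm{supp}\,\mu$. On the left, a standard spectral argument identifies $\beta_{\mu,k,2}(0,1)^2$ with the sum of the $n-k$ smallest eigenvalues of the centered second-moment tensor $V := \int_{B_1} (y - \bar y)\otimes (y - \bar y)\, d\mu(y)$, where $\bar y$ is the $\mu$-mean. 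The desired inequality thus takes the equivalent form
\[
\langle V v, v\rangle \le C(n,\eps) \int_{B_1}\!\!\int_{B_8(y)\setminus B_1(y)} |x-y|^{2-n}\left|\tfrac{\partial u}{\partial \nu_y}\right|^2 dx\,d\mu(y)
\]
for every unit vector $v$ perpendicular to the best-fit $k$-plane of $\mu$.

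The geometric principle driving this estimate is the following: a minimizing map that is $0$-symmetric at two distinct base points $y_1, y_2$ must be translation-invariant along the line through them, hence $1$-symmetric in direction $y_2-y_1$; iterating, $0$-symmetry at $k+2$ affinely independent base points forces $(k+1)$-symmetry. The $L^2$-best approximation theorem is the quantitative avatar of this principle: small radial-deviation against a $\mu$ whose covariance has a large eigenvalue transverse to the best-fit $k$-plane forces $u$ to be close to some element of $\mathrm{sym}_{n,k+1}$, contradicting the hypothesis $\dist_{L^2}(u,\mathrm{sym}_{n,k+1})\ge \eps$.

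Implementation proceeds by contradiction. Suppose the theorem fails along a sequence $u_i$ of minimizers with $\dist_{L^2}(u_i,\mathrm{sym}_{n,0}) \to 0$ but $\dist_{L^2}(u_i,\mathrm{sym}_{n,k+1}) \ge \eps$, accompanied by measures $\mu_i$ whose ratio $\beta_{\mu_i,k,2}(0,1)^2 / \int (\theta_{u_i}(y,8)-\theta_{u_i}(y,1))\,d\mu_i(y)$ diverges. Uniform interior energy bounds (Corollary~\ref{co:NabVal-meas-bound}) together with the compactness theorem for energy minimizers yield a limit $u_\infty$ that is $0$-symmetric yet $\eps$-far from $\mathrm{sym}_{n,k+1}$. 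After normalizing so that $\beta_{\mu_i,k,2}(0,1)^2 = 1$, the measures $\mu_i$ converge weakly-$\ast$ to some $\mu_\infty$ whose covariance still has $n-k$ eigenvalues bounded below by a universal constant, while the radial-energy integrand vanishes in the limit on $\mathrm{supp}\,\mu_\infty$. Feeding this into the two-point symmetry principle above promotes $u_\infty$ to $(k+1)$-symmetric, contradicting the lower bound.

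The main obstacle is making the passage from "small radial energy at several base points" to "translation invariance in the directions spanned by those base points" both quantitative and stable under weak convergence. Because the weights $|x-y|^{2-n}$ vary with the base point, one cannot simply add the deficits at $y_1$ and $y_2$; instead, one must test the Euler--Lagrange equation for $u$ against a well-chosen vector field aligned with $y_2-y_1$ and perform an integration by parts that converts the two radial-deviation integrals into a translational $L^2$-gap. A second delicate issue is ensuring that the rescaled limit measure $\mu_\infty$ genuinely retains spread in the $n-k$ bad directions of $V$; this is typically handled by a Besicovitch-type cover of $\mathrm{supp}\,\mu$ at the scale detected by the smallest eigenvalues of $V$, preventing loss of mass into thin tubes aligned with the best-fit plane.
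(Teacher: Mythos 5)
The paper does not prove this statement; it is cited verbatim from Naber--Valtorta \cite[Theorem 7.1]{NabVal17}. The only content the paper adds is the observation in the paragraph following the statement: the formulation in \cite{NabVal17} carries an additional a priori energy bound hypothesis of the form $\int_{B_9}|\nabla u|^2 \le \Lambda$, and for $\S^2$-valued minimizers this hypothesis is automatically satisfied thanks to the uniform boundedness theorem (Theorem~\ref{th:uniformboundedness}), so the constant can be taken independent of $\Lambda$. Your proposal does not address this point at all, and instead attempts to re-derive the Naber--Valtorta result itself -- a result whose proof in \cite{NabVal17} is substantial and is precisely what the present paper wishes to use as a black box.

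As a sketch of the underlying Naber--Valtorta argument, the proposal captures the right heuristics (the PCA description of $\beta_{\mu,k,2}^2$ as the sum of the $n-k$ smallest eigenvalues of the centered second-moment tensor; the monotonicity formula converting $\theta_u(y,8)-\theta_u(y,1)$ into a radial-deviation integral; the cone-splitting principle that almost-$0$-symmetry from several affinely independent points forces almost-higher symmetry), but the compactness-contradiction scheme you propose has a genuine gap. After normalizing $\mu_i(B_1)=1$ (the natural normalization, since the claimed inequality is homogeneous in $\mu$; your choice $\beta_{\mu_i}^2=1$ does not control $\mu_i(B_1)$), a contradiction sequence with $\beta_{\mu_i}^2 / R_i\to\infty$ (where $R_i$ denotes the right-hand side) may well have $\beta_{\mu_i}^2\to 0$ and $R_i\to 0$ faster. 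In that case the weak-$*$ limit $\mu_\infty$ concentrates on a $k$-plane, the limit covariance degenerates in the transverse directions, and the cone-splitting step yields no contradiction: a $0$-symmetric limit $u_\infty$ that is $\eps$-far from $\mathrm{sym}_{n,k+1}$ is perfectly consistent with a $\mu_\infty$ supported on a $k$-plane. Rescuing the argument requires an anisotropic blow-up in the transverse directions (stretching by $\beta_{\mu_i}^{-1}$), which pushes mass outside $B_1$ and must be handled with care; your remark about a ``Besicovitch-type cover'' gestures at this but does not resolve it. The actual proof in \cite{NabVal17} avoids this pitfall by a direct, quantitative estimate -- testing the stationarity equation against vector fields adapted to the eigendirections of the covariance -- rather than by a soft contradiction argument, which is why it delivers an explicit constant $C(n,\eps)$ at all scales of $\beta$.
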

Again, the formulation in \cite{NabVal17} involves an energy bound. However, Theorem~\ref{th:uniformboundedness} shows a uniform bound on $\int_{B_9} |\nabla u|^2$ and thus we obtain the stronger formulation above. 

Since we shall only consider $k = n-3$, $p = 2$ and $\mu = \H^{n-3} \llcorner \sing u$ from now on, we abbreviate $\beta_{\mu,n-3,2}$ by $\beta$; this should not cause any confusion. 

\section{Uniform boundedness of Minimizers}

\begin{theorem}[Extension Property]\label{th:extensionthm}
Let $\Omega \subset \R^n$ be a bounded domain and let $v \in W^{1,2}(\Omega,\R^3)$ with $v(x)\in\S^2$ for a.e. $x\in\partial \Omega$. Then there exists a map $u \in W^{1,2}(\Omega,\S^2)$, 
\[
 u \Big\rvert_{\partial \Omega} =  v\Big |_{\partial \Omega}
\]
with the estimate 
\[
 \|\nabla u\|_{L^2(\Omega)}\leq C\,\|\nabla v\|_{L^2(\Omega)}
\]
for a uniform constant $C$.
\end{theorem}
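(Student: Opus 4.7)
Proof plan:

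The plan is to implement the averaged projection method of Hardt and Lin \cite{HL1987}: I will replace $v$ by its composition with a carefully chosen retraction of $\R^3$ onto $\S^2$ that fixes $\S^2$ pointwise (thereby preserving the boundary trace automatically) and has controlled Dirichlet energy. First, since the averaging argument requires uniform control of certain integrals in the target variable, I would reduce to bounded $v$ by composing with the radial truncation $T_2 \colon \R^3 \to \overline{B_2(0)}$ (the identity on $\overline{B_2(0)}$ and $y \mapsto 2y/|y|$ outside). Since $T_2$ is $1$-Lipschitz and fixes $\S^2$, the truncation $\tilde v := T_2 \circ v$ satisfies $\tilde v|_{\partial \Omega} = v|_{\partial \Omega}$, $|\nabla \tilde v| \le |\nabla v|$ pointwise, and $|\tilde v| \le 2$ everywhere.

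The core step is an averaged projection. For each $a \in B_{1/2}(0) \subset \R^3$ set $\pi_a(y) := (y - a)/|y - a|$ and $u_a := \pi_a \circ \tilde v$. The pointwise bound $|\nabla u_a| \le 2 |\nabla \tilde v|/|\tilde v - a|$ combined with Fubini yields
\[
\int_{B_{1/2}(0)} \int_\Omega |\nabla u_a|^2 \dd x \dd a
\le 4 \int_\Omega |\nabla v(x)|^2 \int_{B_{1/2}(0)} \frac{\dd a}{|\tilde v(x) - a|^2} \dd x
\le C \int_\Omega |\nabla v|^2 \dd x,
\]
because the inner integral is uniformly bounded: $\tilde v(x) \in \overline{B_2(0)}$ implies the $a$-domain sits inside $B_{5/2}(\tilde v(x))$, and $|z|^{-2}$ is locally integrable in $\R^3$. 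By averaging there then exists a single $a \in B_{1/2}(0)$ with $u_a \in W^{1,2}(\Omega, \S^2)$ and $\|\nabla u_a\|_{L^2(\Omega)}^2 \le C \|\nabla v\|_{L^2(\Omega)}^2$.

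The trace of $u_a$ on $\partial \Omega$ equals $f_a \circ (v|_{\partial \Omega})$ with $f_a := \pi_a|_{\S^2}$, which is \emph{not} the identity. I would then observe that for $|a| < 1$ the map $f_a \colon \S^2 \to \S^2$ is a smooth diffeomorphism: solving $\pi_a(y) = z$ on $\S^2$ reduces to a quadratic in $t = |y - a|$ with the unique positive root $t(a, z) = -a \cdot z + \sqrt{(a \cdot z)^2 + 1 - |a|^2}$, giving the explicit smooth inverse $f_a^{-1}(z) = a + t(a, z)\, z$. Setting $u := f_a^{-1} \circ u_a$ then produces a map with $u|_{\partial \Omega} = v|_{\partial \Omega}$, and since $f_a^{-1}$ is uniformly Lipschitz on $\S^2$ for $|a| \le 1/2$, also $\|\nabla u\|_{L^2(\Omega)} \le C \|\nabla v\|_{L^2(\Omega)}$.

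The main obstacle is that the naive projection $v/|v|$ fails whenever $v$ is allowed to vanish (or merely come close to $0$) on a set carrying positive Dirichlet energy, since then $|\nabla v|^2 / |v|^2$ need not be integrable. The codimension of $\S^2 \subset \R^3$ is essential for resolving this --- in $\R^3$ the kernel $|z|^{-2}$ is locally integrable, which is exactly what makes the Fubini/averaging argument produce a good $a$. The remaining annoyance, that shifting the projection center perturbs the boundary trace, is handled in one stroke by the fact that the induced map $f_a$ extends to a global diffeomorphism of $\S^2$, so no collar or cutoff construction is needed.
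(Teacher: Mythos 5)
The paper states this theorem without proof, referring instead to Hardt and Lin's work (see the remark after Theorem~\ref{th:intro:al} pointing to \cite[Theorem 6.2]{HL1987}); your argument is a correct and self-contained implementation of the classical averaged-retraction method from Schoen--Uhlenbeck and Hardt--Lin, so there is nothing in the paper to compare it against directly. A few remarks. The pointwise bound should really be $|\nabla u_a| \le |\nabla \tilde v|/|\tilde v - a|$ since $D\pi_a(y)$ has operator norm exactly $1/|y-a|$; your factor of $2$ is a harmless overestimate. You should say a word about why $u_a \in W^{1,2}(\Omega,\S^2)$ for the chosen $a$ before quoting the energy bound: $\pi_a$ is only locally Lipschitz away from $a$, so one invokes Fubini once more to ensure $|\{\tilde v = a\}| = 0$ for a.e.\ $a$, approximates $\pi_a$ by the globally Lipschitz truncations $y \mapsto (y-a)/\max(|y-a|,\eps)$, and passes to the limit using the finiteness of $\int |\nabla\tilde v|^2/|\tilde v - a|^2$; this is routine but worth flagging. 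Your handling of the boundary trace by post-composing with the explicit diffeomorphism $f_a^{-1}$ of $\S^2$ is a clean streamlining: it is equivalent to using the single retraction $\Pi_a := f_a^{-1}\circ\pi_a$, which sends $y$ to the intersection with $\S^2$ of the ray from $a$ through $y$ and which restricts to the identity on $\S^2$, so the trace is preserved automatically. This avoids the collar/cutoff interpolation near $\partial\Omega$ that some expositions use and is, in my view, the preferred way to present the argument.
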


That is, using trace theorems, Sobolev embedding, and Gagliardo-Nirenberg inequalities, we obtain as a corollary of Theorem~\ref{th:extensionthm} the following.
\begin{corollary}\label{co:est}

If $u: \B_r(0) \to \S^2$ is a minimizing harmonic map, then the following estimate holds
\begin{equation}\label{eq:traceextension}
 \|\nabla u\|_{L^2(B_r(0))} \aleq \sqrt{r^{\frac{n-1}{2}}\|\nabla_T u\|_{L^2(S_r)}}.
\end{equation}
If $u: \B_r^+(0) \to \S^2$ is a minimizing harmonic map with $u=\varphi$ on  the flat part of the boundary $T_r$. 
Then the following estimate hold
\begin{equation}\label{eq:traceestimatedonhalfball}
 \|\nabla u\|_{L^2(B_r^+(0))} \aleq \sqrt{r^{\frac{n-1}{2}}\|\nabla_T u\|_{L^2(S^+_r)} +  r^{\frac{n-1}{2}}\|\nabla_T \varphi\|_{L^{2}(T_r)}}.
\end{equation}
\end{corollary}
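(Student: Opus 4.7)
The plan is to view $u$ as an energy competitor against an $\S^2$-valued extension of its own boundary trace, build that competitor by first extending in $\R^3$ via a trace right-inverse and then projecting to $\S^2$ through Theorem~\ref{th:extensionthm}, and to gain the square root in the estimate by exploiting the pointwise constraint $|u|=1$ via a Gagliardo--Nirenberg interpolation. The minimizing property then transfers the bound from the competitor to $u$ itself.

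Concretely, for the full-ball estimate \eqref{eq:traceextension}, I first produce an $\R^3$-valued map $v \in W^{1,2}(B_r,\R^3)$ with $v|_{S_r} = u|_{S_r}$ coming from the standard right inverse of the trace operator (equivalently, the harmonic extension of $u|_{S_r}$). This yields
$$
\|\nabla v\|_{L^2(B_r)}^2 \aleq [u|_{S_r}]_{H^{1/2}(S_r)}^2.
$$
Theorem~\ref{th:extensionthm} applied to $v$ then furnishes $w \in W^{1,2}(B_r,\S^2)$ with the same boundary data and $\|\nabla w\|_{L^2(B_r)} \aleq \|\nabla v\|_{L^2(B_r)}$, so that the minimality of $u$ gives $\|\nabla u\|_{L^2(B_r)} \le \|\nabla w\|_{L^2(B_r)}$. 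It remains to bound the $H^{1/2}$-seminorm. On the unit sphere the Gagliardo--Nirenberg interpolation
$$
[g]_{H^{1/2}(\partial B_1)}^2 \aleq \|g\|_{L^\infty(\partial B_1)} \|\nabla_T g\|_{L^2(\partial B_1)}
$$
combined with $|u| = 1$ a.e.\ and the scaling $g(x) = u(rx)$ transfers to the sphere $S_r$ as $[u|_{S_r}]_{H^{1/2}(S_r)}^2 \aleq r^{(n-1)/2}\|\nabla_T u\|_{L^2(S_r)}$, which is exactly \eqref{eq:traceextension}.

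For the half-ball estimate \eqref{eq:traceestimatedonhalfball} I apply the same scheme on the Lipschitz domain $B_r^+$. The trace $f := u|_{\partial B_r^+}$ is well-defined as $f = u$ on $S_r^+$ and $f = \varphi$ on $T_r$, and the two pieces match continuously on the $(n-2)$-sphere $\partial T_r = \partial S_r^+$. A right inverse of the trace on $B_r^+$ (available since $B_r^+$ is Lipschitz) produces an $\R^3$-valued extension with $\|\nabla v\|_{L^2(B_r^+)}^2 \aleq [f]_{H^{1/2}(\partial B_r^+)}^2$, Theorem~\ref{th:extensionthm} again lifts this to an $\S^2$-valued competitor, and the same Gagliardo--Nirenberg inequality applied on the piecewise-smooth manifold $\partial B_r^+$ yields $[f]_{H^{1/2}(\partial B_r^+)}^2 \aleq \|\nabla_T f\|_{L^2(\partial B_r^+)}$. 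Writing $\|\nabla_T f\|_{L^2(\partial B_r^+)}^2 = \|\nabla_T u\|_{L^2(S_r^+)}^2 + \|\nabla_T \varphi\|_{L^2(T_r)}^2$ and using $\sqrt{a^2+b^2} \le a + b$ gives \eqref{eq:traceestimatedonhalfball} after the same scaling argument as before.

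The only non-mechanical step is the scaling bookkeeping: the exponent $(n-1)/2$ under the square root emerges precisely because the $H^{1/2}$-seminorm scales one power differently from $\|\nabla_T(\cdot)\|_{L^2}$, and it is the interpolation with $L^\infty$ that converts the quadratic appearance of $\|\nabla_T u\|_{L^2}$ into the linear one (hence the square root on the right-hand side). The main technical point to verify carefully is that Gagliardo--Nirenberg and the trace right-inverse both go through on $\partial B_r^+$, which is Lipschitz but not smooth due to the corner $\partial T_r$; both are classical and can be obtained by a partition of unity localising away from and across the edge.
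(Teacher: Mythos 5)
Your argument is correct and matches the route the paper sketches (it gives no detailed proof, only cites trace theorems, Sobolev/Gagliardo–Nirenberg, and the extension property, Theorem~\ref{th:extensionthm}): build an $\R^3$-valued extension of the boundary trace, project it to $\S^2$ via Theorem~\ref{th:extensionthm}, compare energies by minimality, and convert $[u]_{H^{1/2}}^2$ into $\|\nabla_T u\|_{L^2}$ at linear (rather than quadratic) cost using interpolation plus the pointwise bound $|u|=1$. One small remark: the interpolation inequality is really the scale-homogeneous seminorm estimate $[g]^2_{H^{1/2}(\partial B_1)}\aleq \|g-\bar g\|_{L^2(\partial B_1)}\|\nabla_T g\|_{L^2(\partial B_1)}$ (with $\bar g$ the mean), which on the compact unit sphere is then dominated by $\|g\|_{L^\infty}\|\nabla_T g\|_{L^2}$; stated with $\|g\|_{L^\infty}$ directly it is not dilation-invariant on $\R^{n-1}$, so one should apply it on the unit sphere and only then rescale, exactly as you do.
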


\begin{theorem}[Uniform Boundedness of Minimizers]\label{th:uniformboundedness}
Let $u\in W^{1,2}(B_R(0),\S^2)$ be a minimizing harmonic map. Then for any $r < R$,
\[
r^{2-n} \int_{B_r(0)} |\nabla u|^2 \dx\leq C\frac{R}{R-r},
\]
where $C$ is an absolute constant.

Also, let $u\in W^{1,2}(B_{2r}^+(0),\S^2)$ be a minimizing harmonic map. Then,
\[
 r^{2-n}\int_{B_{r}^+(0)} |\nabla u|^2 \dx \leq C \,\max \left \{ r^{\frac{3-n}{2}}\|\nabla_T u\|_{L^{2}(B_{2r}^+(0))} , 1 \right \}
\]
where $C$ is an absolute constant.
\end{theorem}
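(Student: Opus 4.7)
The plan is to turn the extension estimates of Corollary~\ref{co:est} into a differential inequality for the energy
\[
 E(\rho) := \int_{B_\rho(0)} |\nabla u|^2 \, dx
\]
(and its half-ball analogue), and then solve it by a direct ODE argument.

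For the interior bound, I begin by noting that the coarea formula gives $E'(\rho) \ge \|\nabla_T u\|_{L^2(S_\rho)}^2$ for a.e.\ $\rho \in (0,R)$. Since $u|_{B_\rho}$ is itself a minimizer on $B_\rho$, the estimate \eqref{eq:traceextension} of Corollary~\ref{co:est} yields
\[
 E(\rho) \le C\, \rho^{(n-1)/2}\, \|\nabla_T u\|_{L^2(S_\rho)} \le C\, \rho^{(n-1)/2} \sqrt{E'(\rho)},
\]
so $E(\rho)^2 \le C^2 \rho^{n-1} E'(\rho)$ a.e. I rewrite this as $\tfrac{d}{d\rho}\!\left(-1/E(\rho)\right) \ge 1/\bigl(C^2 \rho^{n-1}\bigr)$ and integrate from $r$ to $R$ to obtain
\[
 \frac{1}{E(r)} \ge \frac{r^{2-n} - R^{2-n}}{C^2(n-2)}.
\]
Multiplying through by $r^{2-n}$ and using the elementary inequality $1 - t^{n-2} \ge 1 - t$ for $t = r/R \in [0,1)$ produces the desired $r^{2-n} E(r) \le C\, R/(R-r)$.

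For the half-ball estimate the same scheme applies with $E(\rho) := \int_{B_\rho^+(0)} |\nabla u|^2 \, dx$, now using \eqref{eq:traceestimatedonhalfball} to get
\[
 E(\rho) \le C\, \rho^{(n-1)/2}\! \left( \sqrt{E'(\rho)} + A \right), \qquad A := \|\nabla_T \varphi\|_{L^2(T_{2r})},
\]
for a.e.\ $\rho \in (r, 2r)$. At each such $\rho$ either (a) $E(\rho) \le 2C \rho^{(n-1)/2} A$, or (b) $E(\rho) > 2C\rho^{(n-1)/2} A$ and the boundary term is absorbed into the left-hand side to give $E(\rho)^2 \le 4C^2\rho^{n-1} E'(\rho)$. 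If (a) holds for some $\rho_0 \in [r, 2r]$, the monotonicity of $\rho \mapsto E(\rho)$ gives $E(r) \le E(\rho_0) \le C' r^{(n-1)/2} A$; if (a) fails everywhere on $[r, 2r]$ then (b) holds throughout and the interior ODE argument applied with $R = 2r$ yields $r^{2-n} E(r) \le C''$. Combining both cases produces $r^{2-n} E(r) \le C \max\{r^{(3-n)/2} A, 1\}$.

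I expect the main obstacle to be the half-ball step, where the boundary datum $\varphi$ introduces an inhomogeneous $A$ term in the differential inequality and thereby precludes the clean single-ODE derivation used in the interior case. The dichotomy above, together with the monotonicity of $\rho \mapsto E(\rho)$, is what lets each alternative reduce either to a direct pointwise bound (case (a)) or to the interior argument (case (b)).
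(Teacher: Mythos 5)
Your proof is correct and follows essentially the same strategy as the paper: both turn the extension estimate of Corollary~\ref{co:est} into a differential inequality for $E(\rho)$ (the paper writes $D(\rho)$), handle the boundary term $\|\nabla_T\varphi\|_{L^2}$ by a dichotomy, and integrate the resulting ODE $E^2 \lesssim \rho^{n-1}E'$ to get the bound. The only cosmetic differences are that your dichotomy is stated pointwise in $\rho$ rather than once at the level of $D(r)$ versus the tangential term, and you spell out the (simpler, homogeneous) interior case with the $R/(R-r)$ factor explicitly, which the paper leaves to the reader.
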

\begin{proof}
We prove the boundary estimate:

Denote by 
\[
D(\rho) := \|\nabla u\|_{L^2(B_\rho^+)}^2.
\]
Observe that then
\[
 D'(\rho) = \|\nabla u\|_{L^2(S_\rho^+)}^2.
\]
Then from Corollary~\ref{co:est} we have for any $\rho \in [r,2r]$,
\[
 D(\rho) \aleq \rho^{\frac{n-1}{2}} \sqrt{D'(\rho)} +  r^{\frac{n-1}{2}}\|\nabla_T u\|_{L^{2}(T_{2r})}
\]
or in other words
\[
\brac{C D(\rho) -  r^{\frac{n-1}{2}}\|\nabla_T u\|_{L^{2}(T_R)}} \aleq r^{\frac{n-1}{2}} \sqrt{D'(\rho)}.
\]
If it was the case that
\[
 r^{\frac{n-1}{2}}\|\nabla_T u\|_{L^{2}(B_{2r}^+)} \gg D(r)
 \]
we would end up with
\[
 D(r) \leq D(\rho) \aleq r^{\frac{n-1}{2}} \sqrt{D'(\rho)}     \quad \forall \rho \in [r,2r].
\]
That is,
\[
 r^{1-n}\aleq \frac{D'(\rho)}{(D(\rho))^2}     \quad \forall \rho \in [r,2r].
\]
Integrating on $(r,2r)$ we conclude
\[
 r^{2-n}\aleq \frac{1}{D(r)} - \frac{1}{D(2r)}    \quad \forall \rho \in [r,2r].
\]
That is,
\[
 D(r) \aleq r^{n-2}.
\]
We conclude that either 
\[
 r^{2-n} D(r) \aleq  r^{\frac{3-n}{2}}\|\nabla_T u\|_{L^{2}(B_{2r}^+)}
\]
or
\[
 r^{2-n} D(r) \aleq 1.
\]
That is,
\[
 r^{2-n} D(r) \aleq \max \left \{ r^{\frac{3-n}{2}}\|\nabla_T u\|_{L^{2}(B_{2r}^+)} , 1 \right \}.
 \]
\end{proof}

\subsection{Caccioppoli inequality and higher local integrability}

\begin{proposition}[Caccioppoli inequality (interior)]\label{pr:cacint}
Let $u\in W^{1,2}(\Omega,\S^{2})$ be a minimizing harmonic map. Suppose that $B_R(y)\subset\subset\Omega$ for some $y\in\Omega$ and $R>0$. Then there exists a constant C, such that
\begin{equation}\label{eq:comparisoncaccioppoli}
\int_{B_{R/2}(y)}|\nabla u|^2 \dx \leq C R^{-2} \int_{B_R(y)} |u-(u)_{B_{R}(y)}|^2 \dx.
\end{equation} 
Here $(u)_{B_R(y)}$ denotes the mean value.
\end{proposition}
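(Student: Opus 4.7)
This is a reverse Poincar\'e (Caccioppoli) inequality, which I would establish by the standard comparison argument exploiting minimality of $u$. The plan is to construct an admissible $\S^2$-valued competitor with the same boundary trace as $u$ on $\partial B_R(y)$ whose Dirichlet energy is controlled by $R^{-2}\int_{B_R(y)}|u-(u)_{B_R(y)}|^2$, invoke minimality, and iterate to absorb a leftover annular term.

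\textbf{Step 1 (cutoff interpolant).} Fix a standard cutoff $\eta \in C_c^\infty(B_R(y))$ with $\eta\equiv 1$ on $B_{R/2}(y)$, $0\le \eta \le 1$, and $|\nabla\eta|\le C/R$. Set $c := (u)_{B_R(y)} \in \overline{B_1(0)} \subset \R^3$ and consider the $\R^3$-valued interpolant
\[
\tilde v := (1-\eta^2)u + \eta^2 c.
\]
Then $\tilde v = u$ on $\partial B_R(y)$, $\tilde v \equiv c$ on $B_{R/2}(y)$, and
\[
|\nabla \tilde v|^2 \aleq (1-\eta^2)|\nabla u|^2 + R^{-2}|u-c|^2,
\]
with $\nabla \tilde v$ supported in the annulus $B_R(y)\setminus B_{R/2}(y)$. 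Since $\tilde v$ takes values in $\overline{B_1(0)}\subset \R^3$ rather than on $\S^2$, it is not yet admissible.

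\textbf{Step 2 (projection via averaging).} Because $\tilde v$ may vanish, I would employ the classical Hardt--Lin translation-averaging trick. For small $\rho>0$ and $a\in B_\rho(0)\subset \R^3$ set
\[
v_a := \frac{\tilde v + \eta^2 a}{|\tilde v + \eta^2 a|},
\]
extended to equal $u$ where $\eta = 0$. A Fubini computation, exploiting the local integrability of $b\mapsto |b|^{-2}$ in $\R^3$, shows that $v_a \in W^{1,2}(B_R(y),\S^2)$ with trace $u$ on $\partial B_R(y)$ for a.e.\ $a$, and that
\[
|B_\rho(0)|^{-1}\int_{B_\rho(0)}\int_{B_R(y)} |\nabla v_a|^2 \dx \, da \aleq \int_{B_R(y)\setminus B_{R/2}(y)} |\nabla u|^2 \dx + R^{-2}\int_{B_R(y)} |u-c|^2 \dx.
\]
Pigeonholing yields a specific $a_0 \in B_\rho(0)$ for which the same estimate holds for $v_{a_0}$.

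\textbf{Step 3 (minimality and iteration).} By minimality of $u$, $\int_{B_R(y)}|\nabla u|^2 \le \int_{B_R(y)}|\nabla v_{a_0}|^2$, so
\[
\int_{B_{R/2}(y)}|\nabla u|^2 \aleq \int_{B_R(y)\setminus B_{R/2}(y)} |\nabla u|^2 + R^{-2}\int_{B_R(y)} |u-c|^2.
\]
Running the same argument between arbitrary concentric radii $r<\tilde r$ in $[R/2,R]$ and then invoking a standard Widman / Giaquinta--Giusti iteration lemma for the monotone function $r\mapsto \int_{B_r(y)}|\nabla u|^2$ absorbs the annular term and produces the desired reverse Poincar\'e inequality.

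\textbf{Main obstacle.} The technical heart is the projection step: the averaged $W^{1,2}$-estimate for $v_a$ requires showing that for generic $a$ the set $\{|\tilde v+\eta^2 a|<\delta\}$ has measure of order $\delta^3$ in each vertical slice, so the pointwise blow-up $|\nabla v_a|^2 \le |\nabla(\tilde v+\eta^2 a)|^2/|\tilde v+\eta^2 a|^2$ remains integrable on average. This is the classical Luckhaus / Hardt--Kinderlehrer--Lin computation; once that technical step is in place, the remaining arguments (minimality and hole-filling) are routine.
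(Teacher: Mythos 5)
The paper states this proposition without proof, treating it as a standard fact, so there is no in-paper argument to compare against. Your overall strategy --- interpolate to the mean $c=(u)_{B_R(y)}$ with a cutoff, project onto $\S^2$, invoke minimality, and iterate to absorb the annular term --- is indeed the classical one and would succeed once the projection step is set up correctly.

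However, the specific competitor $v_a=(\tilde v+\eta^2 a)/|\tilde v+\eta^2 a|$ has a genuine flaw. Differentiating $\tilde v+\eta^2 a$ produces the extra term $a\,\nabla(\eta^2)$, whose square is of size $|a|^2\eta^2|\nabla\eta|^2\sim\rho^2R^{-2}$ on the annulus; integrated this contributes a quantity $\sim\rho^2R^{n-2}$ to your claimed averaged estimate, and this is \emph{not} controlled by $\int_{B_R\setminus B_{R/2}}|\nabla u|^2+R^{-2}\int_{B_R}|u-c|^2$ (consider $u$ nearly constant on $B_R(y)$: both terms on the right vanish while $\rho^2R^{n-2}$ does not). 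Moreover, once one runs the same construction on nested annuli $B_r\subset B_{r'}$ for the iteration, the offending term scales like $\rho^2(r'-r)^{-1}R^{n-1}$, which degenerates as $r'\to r$, so the hole-filling step would also fail. The classical Luckhaus / Hardt--Kinderlehrer--Lin device avoids this: one translates by a \emph{constant} $a$ (so $\nabla(\tilde v+a)=\nabla\tilde v$ and no extra gradient term appears), selects a good $a_0$ by the Fubini argument, and then restores the boundary trace to $u$ by post-composing with the inverse of the bi-Lipschitz self-map $\phi_{a_0}(z):=(z+a_0)/|z+a_0|$ of $\S^2$, whose Lipschitz constant is uniformly bounded for $|a_0|\le 1/2$. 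In the present paper the cleanest route is simply to apply the Extension Property, Theorem~\ref{th:extensionthm}, to $\tilde v$ on $B_R(y)$: since $\tilde v\in W^{1,2}(B_R(y),\R^3)$ and $\tilde v=u\in\S^2$ on $\partial B_R(y)$, one obtains an $\S^2$-valued competitor $w$ with the same trace and
\[
\int_{B_R(y)}|\nabla w|^2\dx \aleq \int_{B_R(y)}|\nabla\tilde v|^2\dx \aleq \int_{B_R(y)\setminus B_{R/2}(y)}|\nabla u|^2\dx + R^{-2}\int_{B_R(y)}|u-c|^2\dx,
\]
after which minimality and the Widman hole-filling iteration you describe complete the proof without any translation--averaging at all.
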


\begin{proposition}[Caccioppoli inequality (boundary)]\label{pr:cacbdry}
Let $u\in W^{1,2}(B^+,\S^2)$ be a minimizing harmonic map and let $u=\vp$ on $T_1$ for a $\vp\in W^{\frac{1}{2},2}(T_1,\S^2)$. Then for all $r<1$ we have
 \[
  \int_{B^+_{r}}|\nabla u|^2 \dx \le C \int_{B_{2r}^+}|u(x) - \vp^h(x)|^2\dx + C\int_{B_{2r}^+}|\nabla \vp^h (x)|^2 \dx,
 \]
where $\vp^h \in W^{1,2}(\R^n_+,\R^3)$ is any harmonic function with $\varphi^h = \varphi$ on $T_1$.
\end{proposition}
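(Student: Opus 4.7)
The plan is to prove the inequality by constructing an $\S^2$-valued competitor and invoking the minimality of $u$. Fix radii $r \le \rho < R \le 2r$ and a cutoff $\eta \in C_c^\infty(B_R)$ with $\eta \equiv 1$ on $B_\rho$ and $|\nabla \eta| \le C/(R-\rho)$, and set
\[
w := (1-\eta)\,u + \eta\,\varphi^h.
\]
Then $w = \varphi$ on $T_1$ (because $u = \varphi^h = \varphi$ there) and $w = u$ outside $B_R^+$, so $w$ would be an admissible competitor were it $\S^2$-valued.

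The main obstacle is exactly that $w$ need not take values in $\S^2$. Since $\varphi^h$ is the Poisson extension of an $\S^2$-valued map, the maximum principle gives $|\varphi^h| \le 1$ and hence $|w| \le 1$, but $w$ may vanish where $u$ and $\varphi^h$ are nearly antipodal, so a naive normalization $w/|w|$ fails. I would handle this by a Luckhaus-type retraction argument: for each $y \in \S^2$, build a Lipschitz retraction $\pi_y \colon \R^3 \setminus \{-y/2\} \to \S^2$ which agrees with the nearest-point projection outside a fixed small neighbourhood of $-y$ and which collapses that neighbourhood radially to $y$. The composition $v_y := \pi_y \circ w$ is $\S^2$-valued and satisfies $|\nabla v_y| \aleq |\nabla w|$ pointwise off the preimage of the bad neighbourhood. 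Averaging $\int_{B_R^+} |\nabla v_y|^2 \dx$ against the normalized surface measure on $\S^2$ and applying Fubini, one selects a specific $y$ for which $\int_{B_R^+} |\nabla v_y|^2 \dx \aleq \int_{B_R^+} |\nabla w|^2 \dx$; extending $v := v_y$ by $u$ outside $B_R^+$ yields a valid competitor.

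Minimality of $u$ together with the pointwise bound
\[
|\nabla w|^2 \aleq (1-\eta)^2 |\nabla u|^2 + \eta^2 |\nabla \varphi^h|^2 + |\nabla \eta|^2 |u - \varphi^h|^2
\]
then gives
\[
\int_{B_R^+} |\nabla u|^2 \dx \le C\int_{B_R^+\setminus B_\rho^+} |\nabla u|^2 \dx + \frac{C}{(R-\rho)^2} \int_{B_R^+} |u - \varphi^h|^2 \dx + C \int_{B_R^+} |\nabla \varphi^h|^2 \dx.
\]
Setting $\phi(s) := \int_{B_s^+} |\nabla u|^2 \dx$, adding $C\phi(\rho)$ to both sides (the standard hole-filling trick) yields $\phi(\rho) \le \theta\,\phi(R) + A(R-\rho)^{-2} + B$ with $\theta := C/(C+1) < 1$, where $A$ and $B$ are constant multiples of $\int_{B_{2r}^+} |u - \varphi^h|^2 \dx$ and $\int_{B_{2r}^+} |\nabla \varphi^h|^2 \dx$, respectively. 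Applying Giaquinta's standard iteration lemma on the range $r \le \rho < R \le 2r$ absorbs the bad annular term and produces the desired inequality $\phi(r) \aleq r^{-2} \int_{B_{2r}^+} |u - \varphi^h|^2 \dx + \int_{B_{2r}^+} |\nabla \varphi^h|^2 \dx$. The only truly subtle step is the Luckhaus projection: everything else is soft.
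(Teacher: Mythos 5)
The paper states Proposition~\ref{pr:cacbdry} without proof (it is essentially the boundary Caccioppoli inequality from \cite{MMS18}, which in turn follows the standard scheme for minimizers into a sphere), so there is no in-paper argument to compare against; I will assess your proposal on its own merits. Your overall strategy---build the linear interpolant $w = (1-\eta)u + \eta\varphi^h$, project it back to $\S^2$ by an averaged retraction, invoke minimality, and then run hole-filling with Giaquinta's iteration lemma---is indeed the standard route and the algebra in the hole-filling step is fine.

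However, the retraction you describe does not do what you need, and this is a genuine gap. You build $\pi_y$ so that it agrees with $z\mapsto z/|z|$ away from a neighbourhood of $-y$ and \emph{collapses that neighbourhood radially to $y$}. Such a map does not fix $\S^2$ pointwise: it sends points of $\S^2$ near $-y$ to $y$. Consequently $v_y = \pi_y \circ w$ has trace $\pi_y\circ\varphi \ne \varphi$ on $T_R$ (and likewise $\pi_y\circ u\ne u$ on the curved part $S_R^+$), so $v_y$ is not an admissible competitor and the minimality inequality cannot be applied. The correct device here is the Schoen--Uhlenbeck/Hardt--Kinderlehrer--Lin averaged radial projection: for $a\in B_{1/2}^3$ define $\pi_a\colon\overline{B_1^3}\setminus\{a\}\to\S^2$ by sending $z$ to the point where the ray $\{a+t(z-a):t\ge 1\}$ meets $\S^2$. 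This $\pi_a$ \emph{is} the identity on $\S^2$, has $|\nabla\pi_a(z)|\lesssim |z-a|^{-1}$, and since the target sphere $\S^2\subset\R^3$ has ambient dimension $3\ge 3$, averaging over $a\in B_{1/2}^3$ yields a pointwise bound $\int_{B_{1/2}^3}|\nabla(\pi_a\circ w)|^2\,\dd a \lesssim |\nabla w|^2$, from which a good $a$ is selected by Fubini. With this replacement your argument goes through. Two smaller points: the bound $|\varphi^h|\le 1$ does not follow from the maximum principle for an \emph{arbitrary} $W^{1,2}$-harmonic extension on all of $\R^n_+$ (one should either work with a specific bounded extension or simply truncate $\varphi^h$ to $\overline{B_1^3}$, which only decreases the Dirichlet energy), and the chain $r\le\rho<R\le 2r$ implicitly requires $2r<1$, which should be stated.
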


As consequences of Poincar\'{e} inequality, Sobolev embedding and Gehring Lemma we readily obtain
\begin{corollary}[Higher integrability]\label{co:higherintint}
Let $u$ and $R$ be as in Proposition~\ref{pr:cacbdry}. There exists a $p>2$ such that 
\[
 \brac{\int_{B_{R/2}(y)}|\nabla u|^p \dx}^{1/p} \le C \brac{\int_{B_R(y)} |\nabla u|^2 \dx}^{1/2}.
\]
Let $u$ and $R$ be as in Proposition~\ref{pr:cacbdry}. 
\[
 \brac{R^{p-n} \int_{B^+_{R/2}}|\nabla u|^p \dx}^{1/p} \aleq  \brac{R^{2-n} \int_{B^+_R} |\nabla u|^2 \dx}^{1/2} + R^{\frac{3-n}{2}} \|\nabla_T u\|_{L^{2}(T_R)}.
\]
\end{corollary}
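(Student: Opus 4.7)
The plan is the classical Meyers--Elcrat scheme: combine the Caccioppoli inequality with the Sobolev--Poincar\'{e} inequality to produce a reverse H\"older inequality for $|\nabla u|$, then invoke Gehring's lemma to upgrade integrability to some $p > 2$.

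For the interior estimate, set $2_* := \tfrac{2n}{n+2} < 2$. Applying Sobolev--Poincar\'{e} on $B_R(y)$ to $u - (u)_{B_R(y)}$ and substituting into Proposition~\ref{pr:cacint} gives the reverse H\"older inequality
\[
 \brac{R^{-n}\int_{B_{R/2}(y)}|\nabla u|^2\dx}^{1/2} \aleq \brac{R^{-n}\int_{B_R(y)}|\nabla u|^{2_*}\dx}^{1/2_*}.
\]
Gehring's lemma then provides $p=p(n)>2$ and a constant $C$ for which the first claim of the corollary follows after multiplying by the appropriate volume factor $R^{n/p}$.

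For the boundary estimate, fix $\vp^h$ to be a convenient harmonic extension of $\vp$ into $\R^n_+$. Since $u-\vp^h$ vanishes on $T_{2r}$, the Sobolev--Poincar\'{e} inequality for functions with zero trace on $T_{2r}$ applies to $u-\vp^h$. Combining with Proposition~\ref{pr:cacbdry} (and using Jensen's inequality to dominate the $\vp^h$--term of exponent $2_*$ by the one of exponent $2$) yields the inhomogeneous reverse H\"older inequality
\[
 \brac{r^{-n}\int_{B_r^+}|\nabla u|^2\dx}^{1/2} \aleq
 \brac{r^{-n}\int_{B_{2r}^+}|\nabla u|^{2_*}\dx}^{1/2_*}
 +
 \brac{r^{-n}\int_{B_{2r}^+}|\nabla \vp^h|^2\dx}^{1/2},
\]
valid for half-balls centered on $T$ and contained in $B^+$. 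Combined with the interior reverse H\"older inequality on interior balls $B_{\rho/2}(z)\subset B^+$ and the up-to-boundary inhomogeneous form of Gehring's lemma, this produces for some $p = p(n) > 2$
\[
 \brac{R^{p-n}\int_{B_{R/2}^+}|\nabla u|^p\dx}^{1/p} \aleq \brac{R^{2-n}\int_{B_R^+}|\nabla u|^2\dx}^{1/2} + \brac{R^{2-n}\int_{B_R^+}|\nabla \vp^h|^2\dx}^{1/2}.
\]

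It remains to bound $\int_{B_R^+}|\nabla \vp^h|^2$. Choosing $\vp^h$ so that it minimizes Dirichlet energy on $B_R^+$ subject to the trace $\vp$ on $T_R$ and testing against the explicit competitor
\[
 w(x',x_n) := (\vp)_{T_R} + \eta(x_n/R)\bigl(\vp(x')-(\vp)_{T_R}\bigr),
\]
for a cutoff $\eta$ with $\eta(0)=1$ and $\eta$ vanishing near $1$, together with Poincar\'{e} on $T_R$, yields
\[
 \int_{B_R^+}|\nabla \vp^h|^2 \dx \aleq R\,\|\nabla_T \vp\|_{L^{2}(T_R)}^2.
\]
This converts the $\vp^h$--term in the previous display into $R^{(3-n)/2}\|\nabla_T u\|_{L^{2}(T_R)}$ via $\vp = u|_{T_R}$, completing the proof. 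The main technical obstacle is the up-to-boundary version of Gehring's lemma in inhomogeneous form, together with the sharp $R^{1/2}$ scaling in the harmonic extension bound; both are classical but require careful bookkeeping.
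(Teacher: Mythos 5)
Your plan — Caccioppoli plus Sobolev--Poincar\'e to get a reverse H\"older inequality, then Gehring's lemma — is exactly the route the paper alludes to (it proves nothing and simply invokes ``Poincar\'e, Sobolev embedding and Gehring''), and both the interior and boundary cases are set up correctly. The rescaled exponents on the final display check out dimensionally, and the competitor $w$ you propose does give $\int_{B_R^+}|\nabla\vp^h|^2 \aleq R\|\nabla_T\vp\|_{L^2(T_R)}^2$.

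There is, however, one real issue in the boundary case. The inhomogeneous version of Gehring's lemma that you invoke does \emph{not} produce a right-hand side involving only $\left(\barint |\nabla\vp^h|^2\right)^{1/2}$; its conclusion reads
\[
 \brac{\barint_{B_{R/2}^+}|\nabla u|^p}^{1/p} \aleq \brac{\barint_{B_R^+}|\nabla u|^2}^{1/2} + \brac{\barint_{B_R^+}|\nabla\vp^h|^p}^{1/p},
\]
and the hypothesis $\nabla\vp^h\in L^p$ for the same $p>2$ is required; otherwise the gain of integrability is void (take $f=g$ to see that an $L^2$-only bound on the inhomogeneous term cannot survive the self-improvement step). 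Your argument only produces the $L^2$ bound on $\nabla\vp^h$, and the cutoff competitor $w$ you wrote has $\nabla_{x'}w = \eta\,\nabla_T\vp$, which is no better than $L^2$. The gap is fixable, but you must actually establish
\[
 \brac{R^{p-n}\int_{B_R^+}|\nabla\vp^h|^p}^{1/p} \aleq R^{\frac{3-n}{2}}\|\nabla_T\vp\|_{L^2(T_R)}
\]
for some $p>2$. This does hold if one takes $\vp^h$ to be the Poisson extension and exploits the boundedness of $\vp$ (it is $\S^2$-valued): Young's inequality against the Poisson kernel yields $\|\nabla_{x'}\vp^h(\cdot,x_n)\|_{L^p(\R^{n-1})}\aleq x_n^{-(n-1)(p-2)/(2p)}\|\nabla_T\vp\|_{L^2}$, whose $p$-th power is integrable in $x_n$ for $p<2+\tfrac{2}{n-1}$, and the normal derivative is handled likewise (or crudely via $|\vp-(\vp)_{T_R}|\le 2$). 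Since Gehring only produces $p$ slightly above $2$, this range is enough, but without this step the argument is incomplete.
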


\section{Strong Convergence for minimizers and Consequences}

\begin{theorem}[strong convergence of minimizers]\label{th:bsc}
\hfill

\begin{enumerate}
\item Let $\{u_i\}_{i\in\N} \subset W^{1,2}(B,\S^2)$ be a sequence of minimizing harmonic maps. 
Then, up to taking a subsequence $i \to \infty$, we find $u \in W^{1,2}(\B, \S^2)$ which is a minimizer and $u_i\rightarrow u$ strongly in $W^{1,2}_{loc}(B,\S^2)$. 

\item Let $\{u_i\}_{i\in\N} \subset W^{1,2}(B^+,\S^2)$ be a sequence of minimizing harmonic maps and set $\vp_i := u_i$ on $T_1$. Additionally, assume that 
\[
 \sup_{i \in \N} [\varphi_i ]_{W^{1,2}(T_1)}  < \infty.
\]
Then, up to taking a subsequence $i \to \infty$, we find $u: \B^+ \to \S^2$ such that $u\in W^{1,2}(B_r^+,\S^2)$ for any $r \in (0,1)$ and $u_i\rightarrow u$ strongly in $W^{1,2}(B_{r}^+,\S^2)$. Moreover, for every $r \in (0,1)$ the map $u$ is a minimizing harmonic map in $B_r^+$.
\end{enumerate}
\end{theorem}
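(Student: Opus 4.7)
The plan is to handle both statements together, since they share the same backbone: uniform energy bounds, weak compactness, then a Luckhaus-type splicing argument to upgrade weak to strong convergence and to transfer minimality to the limit.

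First I would use the uniform boundedness results of Theorem~\ref{th:uniformboundedness} to obtain local energy bounds. In the interior case, for any fixed $r < 1$ the estimate $r^{2-n}\int_{B_r}|\nabla u_i|^2 \le C/(1-r)$ gives $\sup_i \|\nabla u_i\|_{L^2(B_r)} < \infty$. In the boundary case, the assumption $\sup_i [\varphi_i]_{W^{1,2}(T_1)} < \infty$ plugged into the half-ball estimate in Theorem~\ref{th:uniformboundedness} yields $\sup_i \|\nabla u_i\|_{L^2(B_r^+)} < \infty$ for every $r < 1$. A diagonal extraction then provides a subsequence with $u_i \rightharpoonup u$ weakly in $W^{1,2}_{\loc}$, strongly in $L^2_{\loc}$, and a.e.; the sphere constraint passes to the limit, so $u$ is $\S^2$-valued. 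In case (2) we additionally have $\varphi_i \rightharpoonup \varphi := u|_{T_1}$ weakly in $W^{1,2}(T_1)$, strongly in $L^2(T_1)$.

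Next I would run the standard Luckhaus interpolation argument. For a.e.\ radius $\rho$ (a Fubini choice ensuring good traces for all $u_i$ and for $u$ simultaneously), on a thin spherical annulus $B_\rho \setminus B_{\rho-\delta}$ (or $B_\rho^+ \setminus B_{\rho-\delta}^+$ glued with the flat piece near $T_1$) I would use Luckhaus' lemma to build a map $w_i$ interpolating between $u$ on the inner spherical boundary and $u_i$ on the outer spherical boundary, composed with nearest-point projection to $\S^2$ (which is legitimate because strong $L^2$-convergence on traces guarantees the Luckhaus output lies in a tubular neighborhood of $\S^2$ for $i$ large). In the boundary case the interpolation is performed so that $w_i = \varphi_i$ on the flat piece of the boundary; the uniform $W^{1,2}$-control of $\varphi_i$ on $T_1$ is exactly what keeps the extra error on the flat piece under control. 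Testing the minimality of $u_i$ against the competitor that equals $u$ on the smaller ball and $w_i$ on the annulus yields
\[
\int_{B_\rho^{(+)}} |\nabla u_i|^2 \le \int_{B_{\rho-\delta}^{(+)}} |\nabla u|^2 + o(1) + o_\delta(1),
\]
where $o(1) \to 0$ as $i\to\infty$ (by the strong $L^2$-convergence on the annular trace and the Luckhaus estimate) and $o_\delta(1) \to 0$ as $\delta \to 0$. Combined with lower semicontinuity of the Dirichlet energy this gives $\lim_i \int_{B_\rho^{(+)}} |\nabla u_i|^2 = \int_{B_\rho^{(+)}} |\nabla u|^2$, hence strong $W^{1,2}$-convergence on $B_\rho^{(+)}$, and after sending $\rho \to 1$, on $B_r^{(+)}$ for every $r < 1$.

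Finally, minimality of $u$ is established by reversing the splicing: given any competitor $v$ agreeing with $u$ near $\partial B_\rho$ (resp.\ on $S_\rho^+$ and equal to $\varphi$ on $T_\rho$), Luckhaus interpolation produces $v_i$ that agrees with $v$ on $B_{\rho-\delta}^{(+)}$ and with $u_i$ near $\partial B_\rho$ (resp.\ $S_\rho^+$, with trace $\varphi_i$ on $T_\rho$). Applying the minimality of $u_i$ to $v_i$ and letting $i\to\infty$, then $\delta\to 0$, yields $\int|\nabla u|^2 \le \int|\nabla v|^2$, so $u$ is a minimizer on each $B_r^{(+)}$. The principal technical obstacle I expect is the boundary case: one must build the Luckhaus interpolation compatibly with a varying boundary trace $\varphi_i \to \varphi$, and the bound $\sup_i[\varphi_i]_{W^{1,2}(T_1)}<\infty$ has to be used exactly to absorb the contribution of the boundary strip that the standard interior argument does not see.
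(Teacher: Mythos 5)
Your proposal is correct as a blueprint and follows the same overall strategy as the paper: derive uniform local energy bounds from Theorem~\ref{th:uniformboundedness}, pass to a weak $W^{1,2}$ / strong $L^2$ limit, then splice a competitor from $u$ (or $v$) inside and $u_i$ outside and compare energies, sending $i\to\infty$ before the thickness parameter $\delta\to 0$. The technical tool you use for the splicing is different, though. You invoke Luckhaus' interpolation lemma plus nearest-point projection to $\S^2$, which is the classical Schoen--Uhlenbeck/Simon route. The paper instead constructs an explicit affine cutoff interpolant $\tilde v_{\delta,i} = \eta_\delta v + (1-\eta_\delta) u_i$ on a bespoke interpolation region $I_\delta$, projects it back to the sphere using its own Extension Theorem~\ref{th:extensionthm} (which substitutes for the Luckhaus projection step), and then controls the resulting error terms with two dedicated ingredients: the higher integrability from Corollary~\ref{co:higherintint} (Caccioppoli plus Gehring) to make $\sup_i\int_{T_{1/2}\times(0,\delta)}|\nabla u_i|^2 \to 0$ as $\delta \to 0$, and the Poincar\'e-type Lemma~\ref{la:1dpoinc} to absorb the $\delta^{-2}\int_{I_\delta}|u-v|^2$ term coming from the cutoff (possible because $u=v$ on $T_{3/4}$).

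You correctly flag the flat-boundary piece as the point where the interior argument does not directly apply. But you attribute the control there solely to $\sup_i[\varphi_i]_{W^{1,2}(T_1)} < \infty$; in the paper's mechanism, that bound enters indirectly (through the uniform Dirichlet bound and the boundary Caccioppoli inequality needed for Gehring), and the actual estimates near $T$ are carried by higher integrability and the Poincar\'e-type lemma, not by the boundary norm alone. If you want to stay on the Luckhaus route for the half-ball, you should be explicit that a boundary-adapted Luckhaus interpolation is needed (one that matches the trace $\varphi_i$ on $T$ in addition to matching $u_i$ on $S_\rho^+$), since the classical interior Luckhaus lemma is stated for full spherical annuli; spelling that out would close the gap between your sketch and a complete proof.
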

We will need the following lemma. 
\begin{lemma}[Poincar\'e-type Lemma]\label{la:1dpoinc}
Let $f \in W^{1,2}(B^+_1)$ be such that in the sense of traces $f = 0$ on $T_{3/4}$. Then, for any $\delta \in (0,\frac{1}{2})$,
\[
 \int_{T_{3/4} \times (0,\delta)} |f|^2 \aleq \delta^2 \int_{T_{3/4} \times (0,\delta)} |\nabla f|^2.
\]
\end{lemma}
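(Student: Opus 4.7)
This is a slicing argument: the hypothesis $f=0$ on $T_{3/4}$ allows us to apply the one-dimensional Poincar\'e inequality in the vertical variable $x_n$ on each fiber, and then integrate out the horizontal variables via Fubini.

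First I would reduce to the case of smooth test functions by density. More precisely, since $f \in W^{1,2}(B_1^+)$ vanishes in the trace sense on the flat portion $T_{3/4} \subset \partial B_1^+$, a standard density/reflection argument gives a sequence $f_k \in C^\infty(\overline{B^+_1})$ with $f_k \equiv 0$ in a neighborhood of $T_{3/4}$ and $f_k \to f$ in $W^{1,2}$ on, say, $T_{3/4} \times (0,1/2)$. It then suffices to prove the claim for each $f_k$ and pass to the limit in both sides. Alternatively, by Fubini one may work directly with the ACL-representative of $f$, which is absolutely continuous on a.e.\ vertical segment $\{x'\}\times(0,1/2)$ with $f(x',0)=0$ for $\H^{n-1}$-a.e.\ $x' \in T_{3/4}$.

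Assuming the ACL-representative, for a.e.\ $x' \in T_{3/4}$ and every $t \in (0,\delta)$,
\[
f(x',t) = \int_0^t \partial_n f(x',s)\,ds.
\]
By Cauchy-Schwarz, for such $x'$ and $t$,
\[
|f(x',t)|^2 \le t \int_0^t |\partial_n f(x',s)|^2\,ds \le \delta \int_0^\delta |\nabla f(x',s)|^2\,ds.
\]
Integrating the above in $t$ over $(0,\delta)$ produces a factor of $\delta$, and then integrating in $x'$ over $T_{3/4}$ and applying Fubini yields
\[
\int_{T_{3/4}\times(0,\delta)} |f|^2 \,dx \aleq \delta^2 \int_{T_{3/4}\times(0,\delta)} |\nabla f|^2 \,dx,
\]
which is precisely the desired inequality.

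The only step that requires a moment's care is justifying the fundamental-theorem-of-calculus identity on vertical slices; once that is in hand (either via ACL or via smooth approximation respecting the boundary condition on $T_{3/4}$), the rest is the classical one-dimensional Poincar\'e bound. I do not expect any serious obstacle, since the geometry of the half-ball is trivialized by the product structure $T_{3/4} \times (0,\delta)$.
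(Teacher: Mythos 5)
Your proof is correct and follows essentially the same route as the paper's: slice vertically, invoke the one-dimensional Poincar\'e inequality on each fiber using the vanishing trace on $T_{3/4}$, and integrate out the horizontal variables by Fubini. The only cosmetic difference is that you spell out the fundamental-theorem-plus-Cauchy--Schwarz step explicitly, whereas the paper quotes the 1D inequality directly; both handle the regularity issue (ACL representative or density) in the standard way.
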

\begin{proof}
For continuous functions $\varphi: [0,\delta] \to \R$ we have by the fundamental theorem of calculus,
\[
 \int_0^\delta |\varphi(0) - \varphi(t)|^2\, dt \leq \delta^2 \int_0^\delta |\varphi'(t)|^2\, dt.
\]
For almost all $x' \in T_{3/4}$ we thus have
\[
 \int_0^\delta |f(x',t)|^2 dt = \int_0^\delta |f(x',0) - f(x',t)|^2 dt \leq \delta^2 \int_0^\delta |\nabla f(x',t)|^2\, dt.
\]
Integrating this in $T_{3/4}$ we obtain the claim.
\end{proof}

\begin{proof}[Proof of Theorem~\ref{th:bsc}]
We will give only the proof of (2). The interior convergence follows similarly, see for example \cite[Theorem 6.4]{HL1987} (see also \cite[Theorem 4.6]{MMS18} for $n=3$).

From Theorem~\ref{th:uniformboundedness} we have
\[
 \sup_{i \in \N} [u_i]_{W^{1,2}(B^+_r)} < \infty \quad \mbox{for any $r \in (0,1)$}.
\]
In particular, up to taking a sequence and diagonalizing we find $u : \B^+ \to \S^2$ which is a weak $W^{1,2}$-limit, and strong $L^2$-limit of $u_i$ in each ball $B^+_r$, and $\varphi$ as the weak $W^{1,2}$-limit of $\varphi_i$ on each $T_r$, such that $\varphi$ is the trace of $u$.

We need to show that $u$ is a minimizer in $B^+_r$ and that $u_k \to u$ strongly with respect to the $W^{1,2}$-norm in $B^+_r$ for every $r \in (0,1)$. For simplicity of notation we shall assume $r = \frac{1}{2}$.

By Corollary~\ref{co:higherintint} we have uniformly higher integrability of $v_i$, namely for some $p > 2$ we have
\begin{equation}\label{eq:bsc:higherintegr}
\sup_{i} \int_{B^+_{3/4}} |\nabla u_i|^p < \infty.
\end{equation}
Now let $v\in W^{1,2}(B_{1/2}^+,\S^2)$ be a map that coincides with $u$ on $\partial B_{1/2}^+$, namely such that $v = \vp$ on $T_1$ and $v\big\rvert_{S_{1/2}^+} = u \big\rvert_{S_{1/2}^+}$. We extend $v$ by $u$ to all of $B_{3/4}^+$ and thus find $v \in W^{1,2}(B_{3/4}^+,\S^2)$, $v \equiv u$ on $B_{3/4}^+ \backslash B_{1/2}^+$.

\begin{center}
\includegraphics[width=300px]{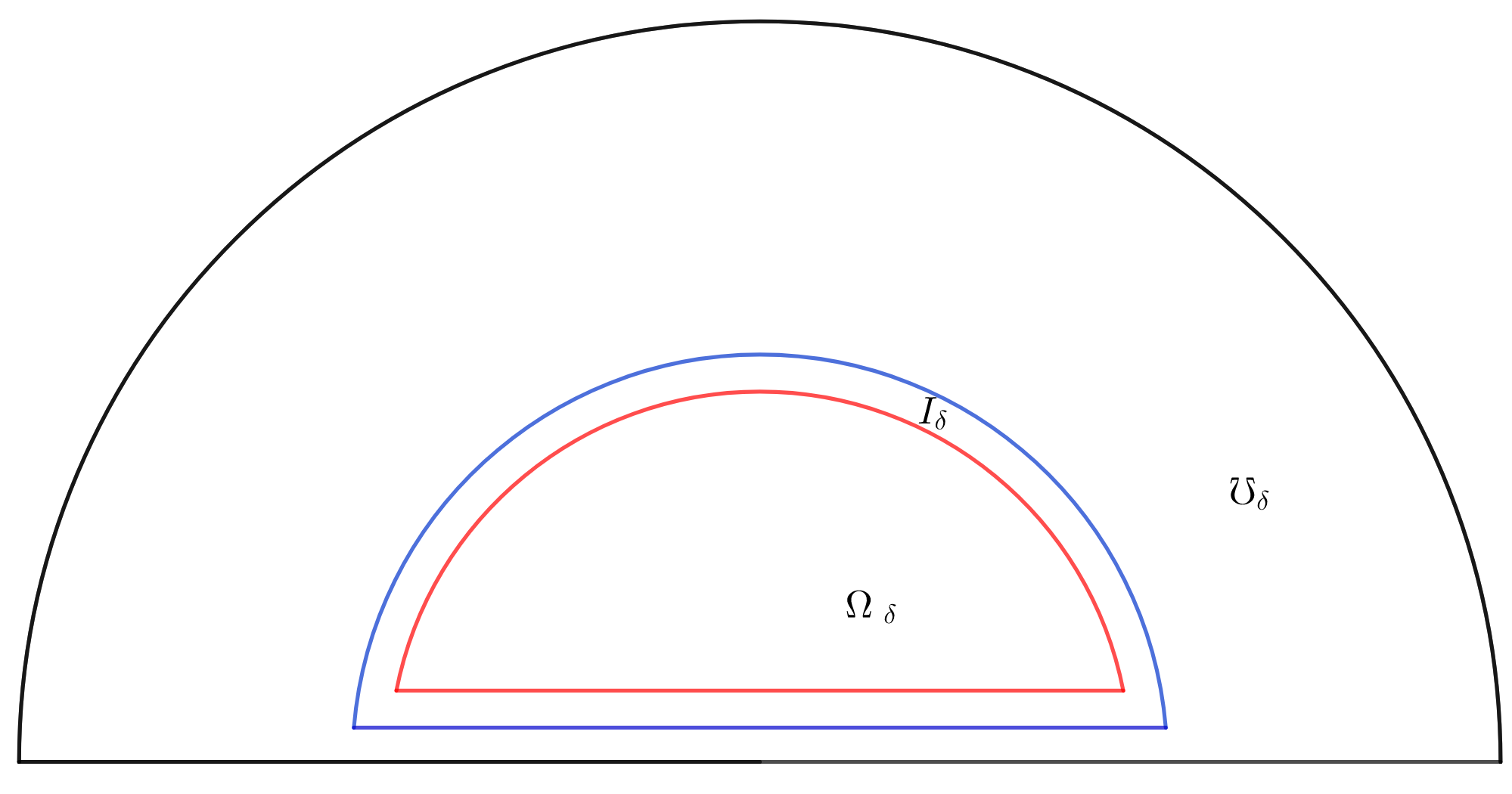}
\end{center}

The map $v$ is a competitor for $u$, and we need to transform it into a competitor for $u_i$. We do so by an interpolation on a set $I_\delta$ which separates $\Omega_\delta$ and $\mho_\delta$, which are defined as follows:
\[
 \Omega_\delta := B_{1/2}^+ \backslash \brac{\R^{n-1} \times (0,2\delta)};
\]
\[
 I_\delta = B^+_{1/2+\delta} \backslash \brac{\Omega_\delta \cup \brac{\R^{n-1} \times (0,\delta)} };
\]
\[
 \mho_\delta := B_{3/4}^+ \backslash \brac{I_\delta \cup \Omega_\delta}.
\]
Let $\eta_\delta \in C_c^\infty(\Omega_\delta \cup I_\delta)$ be a cutoff function, $\eta \in [0,1]$, $\eta_\delta \equiv 1$ in $\Omega_\delta$, with $|\nabla \eta_\delta| \aleq \frac{1}{\delta}$.

We set
\begin{equation}\label{eq:vdeltait}
 \tilde{v}_{\delta,i} := \eta_\delta v + (1-\eta_\delta) u_i = u_i + \eta_\delta (v-u_i) \quad \mbox{in $I_\delta$}.
\end{equation}
Observe that on $\partial I_\delta$ is separated into two parts, the inner part being $\partial \Omega_\delta$ and the outer being $\partial (I_\delta \cup \Omega_\delta)$.

We have $\tilde{v}_{\delta,i} = v$ on $\partial \Omega_\delta$, and $\tilde{v}_{\delta,i} = u_i$ on $\partial (I_\delta \cup \Omega_\delta)$. However, $\tilde{v}_{\delta,i}$ does not map into the sphere. So we use the extension theorem, Theorem~\ref{th:extensionthm}, and obtain some $v_{\delta,i}: I_\delta \to \S^2$ with the same boundary data, i.e., 
$v_{\delta,i} = v$ on $\partial \Omega_\delta$, and $v_{\delta,i} = u_i$ on $\partial (I_\delta \cup \Omega_\delta)$. Moreover, we have 
\begin{equation}\label{eq:nv1}
 \int_{I_\delta} |\nabla v_{\delta,i}|^2 \aleq \int_{I_\delta} |\nabla \tilde{v}_{\delta,i}|^2,
\end{equation}
with the constant independent of $i$ and $\delta$.

We extend $v_{\delta,i}$ by $u_i$ to $\mho_\delta$ and by $v$ to $\Omega_\delta$, and thus have $v_{\delta,i} \in W^{1,2}(B_{3/4}^+,\S^2)$ such that 
\[
 v_{\delta,i} = \begin{cases}
                 v \quad \mbox{in $\Omega_\delta$},\\
                 u_i \quad \mbox{in $\mho_\delta$}.
                \end{cases}
\]
In particular, $v_{\delta,i}$ is a competitor for $u_i$ on $B^+_{3/4}$, and the minimizing property of $u_i$ implies
\begin{equation}\label{eq:sc:1}
\begin{split}
 \int_{B_{3/4}^+} |\nabla u_i|^2 \leq& \int_{B_{3/4}^+} |\nabla v_{i,\delta}|^2\\
 =&\int_{\mho_\delta} |\nabla u_i|^2 + \int_{\Omega_\delta} |\nabla v|^2 + 
 \int_{I_\delta} |\nabla v_{i,\delta}|^2.
\end{split}
 \end{equation}
We observe that
\[
 \int_{\mho_\delta} |\nabla u_i|^2 \leq \int_{B_{3/4}^+ \backslash B_{1/2}^+} |\nabla u_i|^2 + \int_{T_{1/2} \times (0,\delta)} |\nabla u_i|^2
\]
and
\[
 \int_{\Omega_\delta} |\nabla v|^2 \leq \int_{B_{1/2}^+} |\nabla v|^2.
\]
Thus, \eqref{eq:sc:1} becomes
\begin{equation}\label{eq:sc:2}
\begin{split}
 \int_{B_{1/2}^+} |\nabla u_i|^2 \leq \int_{T_{1/2}\times(0,\delta)} |\nabla u_i|^2 + \int_{B_{1/2}^+} |\nabla v|^2 + 
 \int_{I_\delta} |\nabla v_{i,\delta}|^2.
\end{split}
\end{equation}
Moreover, by \eqref{eq:nv1} and \eqref{eq:vdeltait}
\[
\begin{split}
 \int_{I_\delta} |\nabla v_{i,\delta}|^2  \leq& C\int_{I_\delta} |\nabla \tilde{v}_{i,\delta}|^2\\
 \aleq & \int_{I_\delta} |\nabla u_i|^2 + \int_{I_\delta} |\nabla v|^2 + \frac{1}{\delta^2} \int_{I_\delta} |u_i-v|^2\\
 \aleq & \int_{I_\delta} |\nabla u_i|^2 + \int_{I_\delta} |\nabla v|^2 + \frac{1}{\delta^2} \int_{I_\delta} |u-v|^2+\frac{1}{\delta^2} \int_{I_\delta} |u_i-u|^2.
\end{split}
 \]
Also, observe that $u = v$ in $B_{3/4}^+\backslash B_{1/2}^+$. That is,
\[
 \frac{1}{\delta^2} \int_{I_\delta} |u-v|^2 \leq \frac{1}{\delta^2} \int_{T_{3/4} \times (0,2\delta)} |u-v|^2 
\]
Since moreover $u = v$ on $T_{3/4}$, we can apply Lemma~\ref{la:1dpoinc}, and obtain
\[
 \frac{1}{\delta^2} \int_{T_{3/4} \times (0,2\delta)} |u-v|^2 \aleq \int_{T_{3/4} \times (0,2\delta)} |\nabla (u-v)|^2.
\]
We thus arrive at
\begin{equation}\label{eq:sc:3}
\begin{split}
 \int_{B_{1/2}^+} |\nabla u_i|^2 \leq& \int_{B_{1/2}^+} |\nabla v|^2 + \int_{T_{1/2}\times(0,\delta)} |\nabla u_i|^2 \\
 &+  C \int_{I_\delta} |\nabla u_i|^2 + C\int_{I_\delta} |\nabla v|^2 + \int_{T_{3/4} \times (0,2\delta)} |\nabla (u-v)|^2+\frac{C}{\delta^2} \int_{I_\delta} |u_i-u|^2.
\end{split}
\end{equation}
This estimate holds for all $i$ and $\delta$, so taking the limit superior $i \to \infty$, by Fatou's lemma for weakly convergent $u_i$ to $u$,
\begin{equation}\label{eq:sc:4}
\begin{split}
 \limsup_{i \to \infty} \int_{B_{1/2}^+} |\nabla u_i|^2 \leq& \int_{B_{1/2}^+} |\nabla v|^2 + \sup_{i}\int_{T_{1/2}\times(0,\delta)} |\nabla u_i|^2 \\
 &+   \sup_{i}C \int_{I_\delta} |\nabla u_i|^2 +  C\int_{I_\delta} |\nabla v|^2\\
 &+ C\int_{T_{3/4} \times (0,2\delta)} |\nabla (u-v)|^2+\frac{C}{\delta^2}  \limsup_{i \to \infty}\int_{I_\delta} |u_i-u|^2.
\end{split}
\end{equation}
Since $u_i$ converges strongly in $L^2$ to $u$ on $I_\delta \subset B_{3/4}^+$, we have
\[
 \frac{C}{\delta^2}  \limsup_{i \to \infty}\int_{I_\delta} |u_i-u|^2 = 0.
\]
That is, we have shown
\begin{equation}\label{eq:sc:5a}
\begin{split}
 \limsup_{i \to \infty} \int_{B_{1/2}^+} |\nabla u_i|^2 \leq& \int_{B_{1/2}^+} |\nabla v|^2 + \sup_{i}\int_{T_{1/2}\times(0,\delta)} |\nabla u_i|^2 \\
 &+   \sup_{i}C \int_{I_\delta} |\nabla u_i|^2 +  C\int_{I_\delta} |\nabla v|^2\\
 &+ C\int_{T_{3/4} \times (0,2\delta)} |\nabla (u-v)|^2.
\end{split}
\end{equation}
This holds for any $\delta > 0$ small enough, so we take the limit $\delta \to 0$ and obtain
\begin{equation}\label{eq:sc:5}
\begin{split}
 \limsup_{i \to \infty} \int_{B_{1/2}^+} |\nabla u_i|^2 \leq& \int_{B_{1/2}^+} |\nabla v|^2 + \lim_{\delta \to 0} \sup_{i}\int_{T_{1/2}\times(0,\delta)} |\nabla u_i|^2 \\
 &+   \lim_{\delta \to 0}\sup_{i}C \int_{I_\delta} |\nabla u_i|^2 +  \lim_{\delta \to 0} C\int_{I_\delta} |\nabla v|^2\\
 &+ \lim_{\delta \to 0} C \int_{T_{3/4} \times (0,2\delta)} |\nabla (u-v)|^2.
\end{split}
\end{equation}
By absolute continuity of the integral we observe
\[
 \lim_{\delta \to 0} \int_{I_\delta} |\nabla v|^2 = \lim_{\delta \to 0} \int_{T_{3/4} \times (0,2\delta)} |\nabla (u-v)|^2 =  0.
\]
Moreover, by H\"older inequality and the higher integrability of $u_i$, \eqref{eq:bsc:higherintegr}, we find for some $p > 2$,
\[
 \sup_{i}\int_{T_{1/2}\times(0,\delta)} |\nabla u_i|^2  +   \sup_{i}C \int_{I_\delta} |\nabla u_i|^2
 \aleq \left |T_{1/2} \times (0,\delta) \right |^{1-\frac{2}{p}} + |I_\delta|^{1-\frac{2}{p}} \xrightarrow{\delta \to 0} 0.
\]
That is, finally, by Fatou's lemma and weak convergence of $u_i$ to $u$ this implies readily
\begin{equation}\label{eq:sc:HUNGRY}
 \int_{B_{1/2}^+} |\nabla u|^2  \leq \limsup_{i \to \infty} \int_{B_{1/2}^+} |\nabla u_i|^2 \leq \int_{B_{1/2}^+} |\nabla v|^2.
\end{equation}
This holds for all $v$ which coincide with $u$ on $\partial B_{1/2}^+$, in particular we have shown that $u$ is a minimizing harmonic map in $B_{1/2}^+$.

On the other hand, from \eqref{eq:sc:HUNGRY} we get by taking $v \equiv u$,
\[
 \int_{B_{1/2}^+} |\nabla u|^2 \leq \limsup_{i \to \infty} \int_{B_{1/2}^+} |\nabla u_i|^2\leq \int_{B_{1/2}^+} |\nabla u|^2.
\]
That is,
\[
 \lim_{i \to \infty} \int_{B_{1/2}^+} |\nabla u_i|^2 = \int_{B_{1/2}^+} |\nabla u|^2,
\]
and thus, using once again the weak convergence of $u_i$ to $u$,
\[
 \lim_{i \to \infty} \int_{B_{1/2}^+} |\nabla u_i-\nabla u|^2 = \lim_{i \to \infty} \brac{ \int_{B_{1/2}^+} |\nabla u_i|^2+|\nabla u|^2- 2 \nabla u_i \cdot \nabla u} = 0.
\]
That is $u_i \to u$ in $W^{1,2}(B_{1/2}^+)$.
\end{proof}

\begin{remark}
\label{rem:non-flat}
A technical modification of this reasoning allows us to consider in Theorem~\ref{th:bsc} a sequence of maps $u_i$ defined on converging Lipschitz domains with non-flat boundaries. This will be used in Theorem~\ref{th:hot-spots}. 
\end{remark}

\subsection{Smoothness for small boundary data}
As a first corollary of the compactness results above, Theorem~\ref{th:bsc}, we have 

\begin{theorem}[interior regularity for almost constant boundary data]
\label{th:int-regularity-in-terms-of-bdry}
For each bounded smooth domain $\Omega \subset \R^n$, there are small constants $\sigma(\Omega,n) > 0$ and $\eps(n) > 0$ so that the following holds. If $u \in W^{1,2}(\Omega,\S^2)$ is a minimizing harmonic map with trace $\varphi := u \Big |_{\partial \Omega}$ and 
\[
\int_{\partial \Omega} |\nabla \vp|^{n-1} \dd \cH^{n-1}\le \eps,
\]
then $u$ is smooth in the interior region $\{ x \in \Omega : \dist(x, \partial \Omega) > \sigma \}$. 
\end{theorem}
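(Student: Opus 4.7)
The plan is to argue by contradiction, combining the strong compactness of minimizers (Theorem~\ref{th:bsc}) with the interior Schoen--Uhlenbeck $\eps$-regularity. If no such $\sigma,\eps$ exist, then for each $k \in \N$ we obtain a minimizer $u_k$ with trace $\varphi_k$ satisfying $\int_{\partial \Omega}|\nabla \varphi_k|^{n-1}\dhn \leq \tfrac{1}{k}$ and a singular point $x_k$ with $\rho_k := \dist(x_k, \partial \Omega) > \tfrac{1}{k}$. A first easy step is the uniform global energy bound $\int_\Omega |\nabla u_k|^2 \leq C(\Omega, n)$, obtained by extending $\varphi_k$ from $\partial \Omega$ to $\Omega$ through the chain $W^{1,n-1}(\partial \Omega) \hookrightarrow W^{1,n-1}(\Omega) \hookrightarrow W^{1,2}(\Omega)$ (the last embedding valid since $n-1 \geq 2$ and $\Omega$ is bounded), replacing this $\R^3$-valued extension by an $\S^2$-valued competitor via Theorem~\ref{th:extensionthm}, and invoking minimality of $u_k$.

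\smallskip

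I then split on the behaviour of $\rho_k$. If $\liminf_k \rho_k > 0$, the $x_k$ remain in a compact subset of $\Omega$; combining Theorem~\ref{th:bsc}(1) in the interior with Theorem~\ref{th:bsc}(2) and Remark~\ref{rem:non-flat} in a finite atlas of flattened boundary charts yields, along a subsequence, strong $W^{1,2}_{\loc}$-convergence $u_k \to u_*$ to a minimizer whose trace $\varphi_\infty$ is the limit of the $\varphi_k$. Since $\|\nabla \varphi_k\|_{L^{n-1}(\partial \Omega)} \to 0$, Poincar\'e applied on each connected component forces $\varphi_\infty$ to be constant on each component of $\partial \Omega$, and hence $\varphi_\infty^*\omega_{\S^2} \equiv 0$, removing any topological obstruction. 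When $\partial \Omega$ is connected, $u_* \equiv \varphi_\infty$ by uniqueness of the minimizer with constant boundary data; in general, I would blow $u_*$ up at any putative interior singular point to produce a non-trivial tangent map forced by Corollary~\ref{co:uniquetangent} to be an isometric copy of $\Psi$, which is incompatible with the vanishing pullback of the area form on the (piecewise-constant) boundary. Once $u_*$ is known to be smooth, strong $W^{1,2}_{\loc}$-convergence gives $\theta_{u_k}(x_k,r) \to \theta_{u_*}(x_*,r) < \eps_0$ for $r$ small, and $\eps$-regularity makes $u_k$ smooth at $x_k$, contradicting $x_k \in \sing u_k$.

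\smallskip

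In the opposite regime $\rho_k \to 0$, I blow up at the closest boundary points $y_k \in \partial \Omega$: set $\tilde u_k(\tilde x) := u_k(y_k+\rho_k \tilde x)$ on $(\Omega - y_k)/\rho_k$. After straightening $\partial \Omega$ the scaled domains exhaust $\R^n_+$; moreover the scale-invariance of the $W^{1,n-1}$-seminorm on the $(n-1)$-dimensional boundary yields $\|\nabla \tilde \varphi_k\|_{L^{n-1}} \to 0$ as well. Theorem~\ref{th:bsc}(2) together with Remark~\ref{rem:non-flat} delivers a minimizer $\tilde u_* : \R^n_+ \to \S^2$ whose trace on the flat, connected hyperplane boundary is by Poincar\'e a single constant $c \in \S^2$. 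Comparing $\tilde u_*$ on each $B_R^+$ with the cutoff competitor that equals $c$ in a strip adjacent to $T_R$ and interpolates to $\tilde u_*$ across a transition region of width $\delta$---precisely the construction $\tilde v_{\delta,i}$ in the proof of Theorem~\ref{th:bsc}, with Lemma~\ref{la:1dpoinc} absorbing the term $\delta^{-2}\int|\tilde u_* - c|^2$ into $\int|\nabla \tilde u_*|^2$ on the strip---and letting $\delta \to 0$ forces $\int_{B_R^+}|\nabla \tilde u_*|^2 = 0$, so $\tilde u_* \equiv c$. But the rescaled singular points $\tilde x_k = (x_k-y_k)/\rho_k$ lie on $|\tilde x_k| = 1$; upper semicontinuity of the singular set under strong $W^{1,2}_{\loc}$-convergence would place their accumulation inside $\sing \tilde u_* = \varnothing$, again a contradiction. \textbf{Main obstacle.} The technical heart of the argument is the gluing of Theorem~\ref{th:bsc} into a genuinely global strong-convergence statement (interior plus boundary charts) and, in the first regime, ruling out interior singularities of the piecewise-constant-trace minimizer $u_*$; the latter I expect to handle by the further blow-up at $x_*$ indicated above, reducing to the classification of tangent maps in Corollary~\ref{co:uniquetangent}.
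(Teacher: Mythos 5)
Your overall strategy --- compactness (Theorem~\ref{th:bsc}), passage to a limit minimizer, and Theorem~\ref{th:ALs2s} (singular points converge to singular points) to derive a contradiction --- is exactly the route the paper signals by presenting the statement as ``a first corollary of the compactness results.'' The uniform global energy bound via the extension chain $W^{1,n-1}(\partial\Omega)\to W^{1,n-1}(\Omega)\hookrightarrow W^{1,2}(\Omega)$ and Theorem~\ref{th:extensionthm} is correct. However, there are two concrete problems.

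First, you create work for yourself by letting $\sigma_k\to 0$ in the contradiction sequence. The theorem only asserts the existence of \emph{some} $\sigma>0$; negating by first fixing an arbitrary $\sigma_0>0$ and then letting $\eps_k\to0$ produces a sequence with $\rho_k\ge\sigma_0$, so the points $x_k$ stay in a compact subset and only your first regime arises. Your second regime is thus avoidable --- which is fortunate, because the argument you give for it is not valid: the competitor that equals $c$ only on a strip of width $\delta$ adjacent to $T_R$ and equals $\tilde u_*$ elsewhere has energy tending to $\int_{B_R^+}|\nabla\tilde u_*|^2$ as $\delta\to0$, so the comparison is tautological and cannot yield $\int_{B_R^+}|\nabla\tilde u_*|^2=0$. (The conclusion that a minimizer on $\R^n_+$ with constant boundary data and uniformly bounded scaled energy is constant \emph{is} true, but it requires the boundary monotonicity formula and blowing down: the tangent map at infinity is $0$-homogeneous with constant boundary data, hence constant, so the monotone density is $\equiv 0$.)

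Second, and more seriously, your treatment of the limit $u_*$ in the first regime is incomplete when $\partial\Omega$ is disconnected. You observe that $\int_{\partial\Omega}\varphi_\infty^*\omega_{\S^2}=0$ and claim this is incompatible with a $\Psi$-type tangent map; but (i) for $n\ge4$ a map $\partial\Omega\to\S^2$ has no Brouwer degree, and the singular set is an $(n-3)$-dimensional continuum, so there is no single ``topological charge'' to balance; (ii) even for $n=3$, vanishing total degree only forces the sum of local degrees $\pm1$ to cancel, which does not preclude singularities --- the Hardt--Kinderlehrer--Lin example has degree-zero smooth boundary data and an interior singularity; and (iii) Corollary~\ref{co:uniquetangent} classifies tangent maps only at points of $\sing_* u$, not the whole singular set. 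So the argument does not rule out interior singularities of $u_*$ when $\varphi_\infty$ takes different constant values on different components of $\partial\Omega$; the only case you genuinely close is $\partial\Omega$ connected, where $\varphi_\infty$ is a single constant and $u_*$ is trivially constant.
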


\subsection{Convergence of singular points}

\begin{theorem}[{Singular points converge to singular points, {\cite[Thm 1.8]{AlmgrenLieb1988}}} ]\label{th:ALs2s}
Assume that a sequence of energy minimizing maps $u_k \in W^{1,2}(\Omega, \S^2)$ converges strongly in $W^{1,2}_{loc}$ to $u$, and a sequence of their singularities $y_k \in \sing u_k$ converges to $y \in \Omega$. Then $y$ is a singular point of $u$. 
\end{theorem}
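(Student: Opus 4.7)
The plan is to argue by contradiction using the interior $\varepsilon$-regularity theorem of Schoen--Uhlenbeck: there exists a universal constant $\varepsilon_0 = \varepsilon_0(n) > 0$ such that whenever $v$ is a minimizing harmonic map on $B_\rho(z) \subset \R^n$ with $\theta_v(z,\rho) < \varepsilon_0$, the map $v$ is smooth on $B_{\rho/2}(z)$. Contrapositively, at any singular point $z \in \sing v$ one must have $\theta_v(z,\rho) \ge \varepsilon_0$ for all admissible $\rho>0$.

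Suppose for contradiction that $y \notin \sing u$. Then $u$ is smooth in a neighbourhood of $y$, so $|\nabla u|$ is bounded there and consequently
\[
\theta_u(y, r) = r^{2-n}\int_{B_r(y)} |\nabla u|^2 \dx \aleq r^2 \xrightarrow{r \to 0^+} 0.
\]
I would therefore fix $r > 0$ small enough that $B_{2r}(y) \subset\subset \Omega$ and $\theta_u(y, r) < \varepsilon_0/2$.

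The key technical step is to transfer the smallness of the scaled energy to the maps $u_k$ at the moving centres $y_k$, namely to show
\[
\limsup_{k \to \infty} \theta_{u_k}(y_k, r) \le \theta_u(y, r).
\]
Since $y_k \to y$, for each $\eta > 0$ the inclusion $B_r(y_k) \subset B_{r+\eta}(y)$ holds once $k$ is large. Combined with the hypothesis that $\nabla u_k \to \nabla u$ strongly in $L^2_{\loc}(\Omega)$ this yields
\[
\limsup_{k \to \infty} \int_{B_r(y_k)} |\nabla u_k|^2 \dx \le \int_{B_{r+\eta}(y)} |\nabla u|^2 \dx,
\]
and absolute continuity of the Lebesgue integral allows me to send $\eta \to 0$.

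Hence $\theta_{u_k}(y_k, r) < \varepsilon_0$ for all sufficiently large $k$, so the $\varepsilon$-regularity theorem forces $u_k$ to be smooth in a neighbourhood of $y_k$, contradicting $y_k \in \sing u_k$. The only subtlety is comparing energies on balls whose centres move with $k$, but the squeeze argument via $B_r(y_k) \subset B_{r+\eta}(y)$ circumvents it cleanly; no compactness beyond the strong $W^{1,2}_{loc}$ convergence already furnished by Theorem~\ref{th:bsc} is needed.
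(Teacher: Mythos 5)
Your argument is correct and is the standard $\varepsilon$-regularity approach: an upper-density bound at singular points of minimizers is transferred through the strong $W^{1,2}_{\loc}$ convergence (with the moving-centre issue handled by the inclusion $B_r(y_k)\subset B_{r+\eta}(y)$ and absolute continuity), yielding a contradiction with Schoen--Uhlenbeck regularity. The paper simply defers to \cite[Theorem 4.8 (i)]{MMS18}, which proceeds in essentially this way, so your proposal matches the paper's route.
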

\begin{proof}
 The proof is essentially the same as in \cite[Theorem 4.8 (i)]{MMS18}.
\end{proof}

\section{Boundary regularity for smooth and singular boundary data in \texorpdfstring{$W^{1,n-1}$}{W(1,n-1)}}

\subsection{Uniform boundary regularity for constant boundary data}
The first step is uniform boundary regularity for constant boundary data, see \cite[Theorem 1.10]{AlmgrenLieb1988}.
\begin{theorem}[Boundary regularity]\label{th:boundaryregularity-constant}
There exists a uniform constant $\lambda > 0$ such that the following holds:
Let $u\in W^{1,2}(\B^+_1,\S^2)$ be a minimizer and assume that $\varphi = u\Big |_{T_1}$ is constant.
Then $u$ is analytic in
\[
 [0,\lambda] \times T_{1/2}.
 \]
\end{theorem}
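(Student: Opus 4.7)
My plan is to combine the boundary Caccioppoli inequality with interior $\varepsilon$-regularity to first establish smoothness in a uniform interior strip, and then to upgrade this to analyticity up to $T_{1/2}$ via an odd reflection argument in stereographic coordinates.

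First, I would apply Proposition~\ref{pr:cacbdry} with $\varphi \equiv p$ constant and the trivial harmonic extension $\varphi^h \equiv p$, so that $\nabla \varphi^h = 0$ and the pointwise bound $|u - p| \le 2$ (valid since $u, p \in \S^2$) yields
\[
r^{2-n}\int_{B_r^+(x_0)} |\nabla u|^2 \le C\, r^{2-n} \int_{B_{2r}^+(x_0)} |u-p|^2 \le C r^2
\]
for every $x_0 \in T_{1/2}$ and $r \in (0, 1/4)$. For an interior point $y = (y',y_n)$ with $y' \in T_{1/2}$ and $y_n \in (0, 1/8)$, the inclusion $B_{y_n}(y) \subset B_{2y_n}^+((y',0))$ gives the interior normalized energy bound $y_n^{2-n} \int_{B_{y_n}(y)} |\nabla u|^2 \le C y_n^2$. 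Choosing a universal $\lambda_0 > 0$ so that $C \lambda_0^2$ lies below the Schoen--Uhlenbeck interior $\varepsilon$-regularity threshold, I would conclude that $u$ is smooth at every such $y$, and hence smooth on the open half-slab $(0, \lambda_0) \times T_{1/2}$.

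Next, I would extend smoothness up to the flat part by reflection. Combining the Caccioppoli bound above with Poincar\'{e}'s inequality applied to $u - p$ (whose trace vanishes on $T_{1/2}$) gives the Morrey-Campanato estimate $\int_{B_r^+(x_0)} |u - p|^2 \le C r^{n+2}$, which forces $u$ to be Lipschitz continuous up to $T_{1/2}$ with $u(x_0) = p$; in particular $|u(y) - p| \le C y_n$ on a smaller strip. Shrinking $\lambda_0$ if necessary, $u$ then takes values in a fixed coordinate neighborhood of $p \in \S^2$, in which I introduce a stereographic chart $\Phi \colon \S^2 \setminus \{-p\} \to \R^2$ with $\Phi(p) = 0$. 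The composition $\tilde u := \Phi \circ u$ solves a semilinear elliptic system of the form $\Delta \tilde u + \Gamma(\tilde u)(\nabla \tilde u, \nabla \tilde u) = 0$ with analytic coefficients, vanishing Dirichlet data on $T_{1/2}$, and a Christoffel term $\Gamma(\tilde u)$ that is odd in $\tilde u$ (because $p$ is a symmetry center for the round metric in these coordinates). The odd reflection $\bar u(x', x_n) := -\tilde u(x', -x_n)$ for $x_n < 0$ then produces a $W^{1,2}$ map across $T_{1/2}$ that is a weak solution of the \emph{same} system: under the simultaneous change $(x', x_n) \mapsto (x', -x_n)$ and $\tilde u \mapsto -\tilde u$, both the Laplacian and the quadratic Christoffel term pick up the same sign, so the equation is preserved. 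Interior elliptic regularity then makes $\bar u$ smooth in a full neighborhood of $T_{1/2}$, and Morrey's classical analyticity theorem for analytic elliptic systems upgrades this to real-analyticity. Restricting to the upper half-space yields analyticity of $u$ in $[0, \lambda] \times T_{1/2}$ for some $\lambda \in (0, \lambda_0]$.

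I expect the main obstacle to be the reflection step, specifically the verification that the parity of the nonlinear Christoffel term matches that of the Laplacian so that the reflected system is satisfied weakly across $T_{1/2}$. Once this parity check is in place, everything else --- continuity up to $T_{1/2}$, interior regularity of the reflected map, and the upgrade from smoothness to analyticity --- follows from standard elliptic theory.
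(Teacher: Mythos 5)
Your step 2 (odd reflection in stereographic coordinates with the parity check on the Christoffel symbols) is correct and a nice alternative to the references the paper cites for analyticity; since the round metric in stereographic coordinates is conformal with an even conformal factor, the Christoffel symbols are indeed odd in $w$, and the reflected map solves the same system weakly across $T_{1/2}$.

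However, there is a genuine gap in your first step, and it is fatal to the whole argument. You claim that Proposition~\ref{pr:cacbdry} with $\varphi^h \equiv p$ gives
\[
r^{2-n}\int_{B_r^+(x_0)} |\nabla u|^2 \le C\, r^{2-n} \int_{B_{2r}^+(x_0)} |u-p|^2 \le C r^2.
\]
This reads the boundary Caccioppoli inequality literally \emph{without} the $r^{-2}$ factor. But the scale-invariant Caccioppoli inequality — and the one derivable by the usual comparison/hole-filling argument, see the interior version in Proposition~\ref{pr:cacint} which does carry the factor $R^{-2}$ — has a factor of $r^{-2}$ in front of the $\int |u-\varphi^h|^2$ term. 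With that factor restored, the pointwise bound $|u - p| \le 2$ only gives
\[
r^{2-n}\int_{B_r^+(x_0)} |\nabla u|^2 \le C\, r^{-n} \int_{B_{2r}^+(x_0)} |u-p|^2 \le C,
\]
i.e.\ boundedness of the scaled energy (which Theorem~\ref{th:uniformboundedness} already gives), not smallness. Without smallness, the interior $\varepsilon$-regularity threshold is not reached, the Morrey–Campanato estimate $\int_{B_r^+} |u-p|^2 \aleq r^{n+2}$ in your third step does not follow (the same computation gives only the trivial bound $\aleq r^n$), and you cannot conclude continuity up to $T_{1/2}$, which the reflection step requires as input.

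In fact, the decay $\theta_u(x_0,r) \aleq r^2$ is essentially the content of boundary regularity itself, and it cannot be extracted from Caccioppoli alone: combining Caccioppoli with Poincar\'e yields $\int_{B_r}|\nabla u|^2 \aleq \int_{B_{2r}}|\nabla u|^2$, which has no content. This is precisely why the paper's proof of Theorem~\ref{th:boundaryregularity-constant} does not go through Caccioppoli but through Lemma~\ref{la:uniformsmallnessboundaryconstant}: a compactness/contradiction argument (using the strong convergence of minimizers, Theorem~\ref{th:bsc}, together with the removability Lemma~\ref{lem:removesg}) to show that $\sup_{r<R_0} r^{2-n} \int_{B_r(x_0)}|\nabla u|^2 < \varepsilon$ for $x_0 \in T_{1/2}$ when the boundary data is constant. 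You should replace your first step by an argument of this type; once smallness of the scaled energy is established via compactness, the $\varepsilon$-regularity theory, boundary continuity, and your reflection argument for analyticity all go through.
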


The main ingredient in Theorem~\ref{th:boundaryregularity-constant} is the following. Here we state it without the proof as, after adjusting the dimension, it follows from the proof in \cite[Lemma 5.2]{MMS18}.
\begin{lemma}\label{la:uniformsmallnessboundaryconstant}
For any $\eps > 0$ there is a uniform constant $R_0(\eps) \in (0,\frac{1}{2})$ so that the following holds:
Let $u\in W^{1,2}(\B^+,\S^2)$ be a minimizer and assume that $\varphi = u\Big |_{T_1}$ is a constant.
Then for any $x_0 \in T_{1/2}$
\[
\sup_{r < R_0(\eps)} r^{2-n} \int_{B_r(x_0)} |\nabla u|^2 \dx < \eps.
 \]
\end{lemma}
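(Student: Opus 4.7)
The plan is a blow-up argument by contradiction, in the spirit of \cite[Lemma 5.2]{MMS18}. Assume the conclusion fails, producing $\eps_0 > 0$, minimizers $u_i \in W^{1,2}(B^+,\S^2)$ with constant boundary data $u_i\big|_{T_1} \equiv p_i \in \S^2$, points $x_i \in T_{1/2}$, and radii $r_i \to 0$ satisfying $r_i^{2-n} \int_{B^+_{r_i}(x_i)} |\nabla u_i|^2 \geq \eps_0$. After passing to a subsequence so that $p_i \to p_\infty \in \S^2$, I would rescale by $v_i(y) := u_i(x_i + r_i y)$, obtaining minimizers on half-balls $B^+_{\rho_i}(0)$ with $\rho_i \to \infty$, carrying the constant boundary value $p_i$ on the flat part and $\int_{B^+_1(0)} |\nabla v_i|^2 \geq \eps_0$.

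Next I would extract a limit on all of $\R^n_+$. Since $\nabla_T v_i \equiv 0$ on the flat boundary, Theorem~\ref{th:uniformboundedness} yields a uniform bound $\rho^{2-n}\int_{B^+_\rho(0)} |\nabla v_i|^2 \leq C$ for every fixed $\rho>0$ and all large $i$. Applying Theorem~\ref{th:bsc}(2) on each half-ball (after rescaling it to $B^+_1$) together with a diagonal extraction produces, along a further subsequence, $v_i \to v$ strongly in $W^{1,2}_{\loc}(\R^n_+)$, where $v\colon \R^n_+ \to \S^2$ is a minimizing harmonic map on every $B^+_\rho$ with $v\big|_{\partial \R^n_+}\equiv p_\infty$ and, by strong convergence, $\int_{B^+_1(0)} |\nabla v|^2 \geq \eps_0$.

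The contradiction would then come from the tangent behaviour of $v$ at infinity. Applying Theorem~\ref{th:uniformboundedness} directly to $v$ gives $\theta_v(0,r) := r^{2-n}\int_{B^+_r(0)}|\nabla v|^2 \leq C$ uniformly in $r$, and by the (boundary) monotonicity formula $\theta_v(0,\cdot)$ is nondecreasing, so $L := \lim_{r\to\infty}\theta_v(0,r)$ exists in $[\eps_0,C]$. Running the compactness argument once more on the rescalings $\Phi_\lambda(y) := v(\lambda y)$ as $\lambda\to\infty$ produces a subsequential limit $\Phi$ which is a minimizing harmonic map on $\R^n_+$ with the same constant boundary data $p_\infty$ and with $\theta_\Phi(0,\mu) = L$ for every $\mu>0$; the equality case of the monotonicity formula then forces $\Phi$ to be $0$-homogeneous.

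At this point I would invoke the qualitative boundary regularity of Schoen and Uhlenbeck \cite{SU84}: since $\Phi$ has constant (hence smooth) boundary data, it is smooth in some neighborhood of $\partial \R^n_+$, and in particular continuous at $0$. Combining continuity at $0$ with $0$-homogeneity forces $\Phi(y) = \lim_{\mu\to 0^+}\Phi(\mu y) = \Phi(0) = p_\infty$ for every $y$, so $\Phi \equiv p_\infty$ and therefore $L = \theta_\Phi(0,1) = 0$, contradicting $L \geq \eps_0 > 0$. The main obstacle is conceptual rather than computational: one must ensure that the black-box input at this final step is genuinely weaker than the statement being proved. The qualitative Schoen--Uhlenbeck theorem only guarantees smoothness of each single minimizer with smooth boundary data, whereas the present lemma upgrades this to a \emph{uniform} rate of decay of the rescaled energy, which is precisely what the compactness scheme extracts and what the iteration behind Theorem~\ref{th:boundaryregularity-constant} requires.
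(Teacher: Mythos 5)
Your proof is correct and proceeds along the same contradiction/compactness blow-up scheme that the paper invokes by referring to \cite[Lemma 5.2]{MMS18}: rescale the bad sequence so that the half-balls fill out $\R^n_+$, pass to a strong $W^{1,2}_{\loc}$ limit $v$ with constant boundary data using Theorem~\ref{th:bsc}(2) and Theorem~\ref{th:uniformboundedness}, blow down at infinity to a $0$-homogeneous tangent map $\Phi$ with constant data, and rule out $\Phi$ being nonconstant. Two points worth making explicit: (i) you silently invoke a boundary monotonicity formula at points of $\partial\R^n_+$ with constant boundary trace, which the paper states only in the interior version (Theorem~\ref{th:monotonicityformula}) but which is classical for constant (or more generally smooth) Dirichlet data; and (ii) the final step, ruling out a nonconstant $0$-homogeneous minimizer on $\R^n_+$ with constant boundary data, is exactly the content of the Schoen--Uhlenbeck boundary regularity machinery. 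You correctly observe that citing their qualitative result is not circular -- internally they close the loop via Lemaire's theorem that a harmonic map from a simply connected $2$-disk with constant boundary value is constant, combined with dimension reduction -- but the argument would be more self-contained if you stated this reduction rather than leaning on $\cite{SU84}$ as a black box.
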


\begin{proof}[Proof of Theorem~\ref{th:boundaryregularity-constant}]
The proof is essentially the same as \cite[Theorem 5.1]{MMS18}, for analyticity of the solutions we refer to \cite{BG80,T72}.
\end{proof}
 
\subsection{Uniform boundary regularity for singular boundary data}

\begin{theorem}[Uniform boundary regularity for singular boundary data]\label{th:r4s}
Let $\Omega \subset \R^n$ be a bounded domain with smooth boundary. There are constants $R = R(\Omega)$ and $\eps = \eps(\Omega)$ such that the following holds. 

Take any minimizing harmonic map $u:\Omega \to \S^2$ and denote the trace of $u$ on $\partial \Omega$ by $\varphi$.

If for some $x_0 \in \partial \Omega$ and some $\rho < R$ we have the estimate
\[
\Lambda := \int_{T_\rho(x_0)} |\nabla_T \varphi |^{n-1}  d\mathcal{H}^{n-1}\leq \eps
\]
then $u$ is smooth in $B_{x_0}(\rho/2) \cap \Omega$. 
\end{theorem}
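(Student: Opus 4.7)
The plan is to argue by contradiction via a blow-up and compactness analysis, in the spirit of \cite{MMS18}. Suppose the stated constants do not exist. I then extract sequences of minimizing harmonic maps $u_k$ with boundary data $\varphi_k$, points $x_k \to x_0 \in \partial \Omega$, radii $\rho_k < 1/k$ with $\Lambda_k \to 0$, and singular points $y_k \in \sing u_k \cap B(x_k,\rho_k/2)$. Using uniform smooth charts depending only on $\Omega$, I flatten $\partial\Omega$ near $x_0$; set $h_k := \dist(y_k,\partial\Omega) \le \rho_k/2$ and let $x'_k$ be the orthogonal projection of $y_k$ onto $\partial \Omega$.

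Let $\lambda > 0$ denote the universal constant from Theorem~\ref{th:boundaryregularity-constant}. The key step is to pick a rescaling scale $\rho'_k$ centered at $x'_k$, yielding $v_k(x) := u_k(x'_k + \rho'_k x)$, which places the rescaled singular point $\tilde y_k = (0, h_k/\rho'_k)$ at a fixed interior location of the eventual boundary-analytic region, while keeping the rescaled boundary seminorm small. In the generic regime $h_k \le \lambda\rho_k/4$, I take $\rho'_k = 2h_k/\lambda$: then $\tilde y_k = (0, \lambda/2)$ is fixed, and $T_{\rho'_k}(x'_k) \subset T_{\rho_k}(x_k)$, so scale-invariance of the $W^{1,n-1}$ seminorm on the $(n-1)$-dimensional boundary gives $\|\nabla_T v_k|_{T_1}\|_{L^{n-1}(T_1)}^{n-1} \le \Lambda_k \to 0$. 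In the remaining regime $h_k > \lambda\rho_k/4$, a secondary rescaling centered at $y_k$ at scale $h_k$ produces a half-space model with boundary $\{x_n = -1\}$ and a singular point at the origin; a finite covering of the relevant bounded-radius boundary ball by translates of $T_{\rho_k}(x_k)$ again delivers an $O(\Lambda_k)$ bound on the rescaled $W^{1,n-1}$ seminorm.

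In either regime I then invoke Theorem~\ref{th:bsc}(2), whose hypothesis $\sup_i [\varphi_i]_{W^{1,2}(T_1)} < \infty$ holds via the embedding $W^{1,n-1}(T_1) \hookrightarrow W^{1,2}(T_1)$ (valid since $n \ge 3$), to extract a strongly $W^{1,2}_{\loc}$-convergent subsequence $v_k \to v$ with $v$ a minimizing harmonic map. Poincar\'e's inequality in $W^{1,n-1}$ combined with the constraint $|\psi_k|=1$ forces, along a sub-subsequence, $\psi_k \to c \in \S^2$ in $L^{n-1}(T_1)$ for some constant $c$, so $v|_{T_1} \equiv c$. Theorem~\ref{th:boundaryregularity-constant} then yields analyticity of $v$ throughout $[0,\lambda] \times T_{1/2}$. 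By construction, the rescaled singular points $\tilde y_k$ accumulate at an interior point of this analytic region, yet Theorem~\ref{th:ALs2s} forces any such accumulation point to lie in $\sing v$ --- a contradiction.

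The principal obstacle, and the reason for the two-regime rescaling, is to reconcile the competing requirements on $\rho'_k$: it must be small enough that the rescaled boundary ball still sits inside $T_{\rho_k}(x_k)$ (preserving smallness of the $W^{1,n-1}$ seminorm), yet large enough to push the rescaled singular point strictly into the boundary-analytic slab $[0,\lambda] \times T_{1/2}$ of the limit. It is exactly the scale invariance of the $W^{1,n-1}$ seminorm in the critical dimension $n-1$ that permits both demands to be met simultaneously --- the same reason this is the natural norm in Theorem~\ref{th:intro:al}. A secondary technicality is to extend Theorem~\ref{th:bsc} to the asymptotically flat Lipschitz domains obtained by rescaling curved portions of $\partial\Omega$, as indicated in Remark~\ref{rem:non-flat}.
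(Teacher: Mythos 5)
Your regime-1 argument is essentially the same compactness-and-blowup strategy the paper uses (via Proposition~\ref{th:rs:flat} and Corollary~\ref{co:flatreg}): exploit the critical scale-invariance of the $W^{1,n-1}$ boundary seminorm so the rescaled constraint ball keeps small seminorm, pass to a limit by Theorem~\ref{th:bsc}, identify constant boundary data, and close via Theorem~\ref{th:boundaryregularity-constant} together with Theorem~\ref{th:ALs2s}. There is a constant-bookkeeping issue there (with $y_k\in B(x_k,\rho_k/2)$, $h_k\le\lambda\rho_k/4$ and $\rho'_k=2h_k/\lambda$ one has $|x'_k-x_k|\le h_k+\rho_k/2$, so $T_{\rho'_k}(x'_k)\subset T_{\rho_k}(x_k)$ does not follow as asserted), but that is fixable by tightening the regime threshold or shrinking the radius of the ball in which one seeks singularities.

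The serious problem is regime 2. After your secondary rescaling the singular point lies at depth $1$ above the (rescaled) flat boundary, while Theorem~\ref{th:boundaryregularity-constant} provides analyticity only in a slab of uniform depth $\lambda$ --- a small universal constant, a priori much smaller than $1$. The limiting singularity therefore lies outside the analytic region, and your claim that ``the rescaled singular points accumulate at an interior point of this analytic region'' is false in this regime; nothing rules out that a minimizer on a half-ball with constant data on the flat face has an interior singularity at depth comparable to $1$. (The proposed ``finite covering by translates of $T_{\rho_k}(x_k)$'' is also circular, since the hypothesis furnishes smallness on that single ball only.) In fact regime 2 should not be attempted: the conclusion ``smooth in $B_{\rho/2}(x_0)\cap\Omega$'' is over-stated, and the paper's own machinery --- flatten by blowup, then apply Corollary~\ref{co:flatreg} --- only yields smoothness in a half-ball of radius comparable to $\lambda\rho$, which is exactly how the result is invoked later (Step~1 of the proof of Theorem~\ref{th:global-stability} asserts smoothness only in a $\lambda\rho$-neighborhood of $\partial\Omega$). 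With the correct radius, regime 2 does not arise and your proposal collapses to the paper's argument.
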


\begin{proposition}\label{th:rs:flat}
There exist uniform constants $R_0$ and $\eps$ such that the following holds.
Take any minimizing harmonic map $u:B^+ \to \S^2$ and denote the trace of $u$ on $T_1$ by $\varphi$.

If for some $\rho_0 < R_0$ we have the estimate
\[
[\varphi ]_{W^{1,n-1}(T_1 \cap B_{\rho_0})} \leq \eps
\]
then $u$ is smooth in 
\[
B_{\lambda \rho_0} \cap \{x_3 > \lambda\, \rho_0 /2\}
\]
where $\lambda$ is from Theorem~\ref{th:boundaryregularity-constant}.
\end{proposition}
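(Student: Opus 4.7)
The plan is a blow-up/compactness contradiction that exploits the scale-invariance of the $W^{1,n-1}$-seminorm on the $(n-1)$-dimensional flat boundary. Assume the conclusion fails; then I can extract a sequence of minimizing harmonic maps $u_i \in W^{1,2}(B^+, \S^2)$ with traces $\varphi_i := u_i\rvert_{T_1}$, radii $\rho_i \searrow 0$, and numbers $\eps_i \searrow 0$ such that
\[
[\varphi_i]_{W^{1,n-1}(T_1 \cap B_{\rho_i})} \le \eps_i,
\]
and yet $u_i$ is not smooth on $B_{\lambda \rho_i} \cap \{x_n > \lambda \rho_i/2\}$; fix a singular point $y_i$ of $u_i$ in this set. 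Rescale by setting $v_i(x) := u_i(\rho_i x)$, so that $v_i$ is a minimizing harmonic map on $B^+_{1/\rho_i} \supset B^+$. Because $[\cdot]_{W^{1,n-1}}$ is scale-invariant on $(n-1)$-dimensional domains, the traces $\tilde{\varphi}_i := v_i\rvert_{T_1}$ satisfy
\[
[\tilde{\varphi}_i]_{W^{1,n-1}(T_1)} = [\varphi_i]_{W^{1,n-1}(T_{\rho_i})} \le \eps_i \xrightarrow{i \to \infty} 0,
\]
and H\"older's inequality then yields $\sup_i [\tilde{\varphi}_i]_{W^{1,2}(T_1)} < \infty$.

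Since $\tilde{\varphi}_i$ is $\S^2$-valued, it is uniformly bounded in $L^\infty$; combined with the seminorm bound and compact embedding $W^{1,n-1}(T_1) \hookrightarrow L^{n-1}(T_1)$, a subsequence converges strongly in $W^{1,n-1}(T_1)$ to some constant $c \in \S^2$. The hypotheses of Theorem~\ref{th:bsc}(2) being verified, I pass to a further subsequence to obtain $v_i \to v$ strongly in $W^{1,2}(B^+_r, \S^2)$ for every $r \in (0,1)$, where $v$ is minimizing on each $B^+_r$ and has trace $c$ on $T_1$ (trace continuity under $W^{1,2}$-convergence identifies the limiting trace). By Theorem~\ref{th:boundaryregularity-constant}, $v$ is analytic on $[0,\lambda]\times T_{1/2}$. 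Taking $\lambda$ small enough from the outset (for instance $\lambda \le 1/\sqrt{3}$), a direct geometric check shows
\[
\overline{B_\lambda \cap \{x_n \ge \lambda/2\}} \subset [0,\lambda]\times T_{1/2},
\]
so $v$ is smooth in this closed region.

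To close the contradiction, observe that the rescaled singularities $z_i := y_i/\rho_i$ lie in the open set $B_\lambda \cap \{x_n > \lambda/2\}$ and, along a subsequence, converge to some $z$ in its closure. By Theorem~\ref{th:ALs2s}, the strong $W^{1,2}_{\mathrm{loc}}$-convergence $v_i \to v$ combined with $z_i \in \sing v_i$ and $z_i \to z$ forces $z \in \sing v$, contradicting the smoothness of $v$ at $z$.

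\textbf{Main obstacle.} The delicate point is producing strong convergence $\tilde{\varphi}_i \to c$ fit for activating Theorem~\ref{th:bsc}(2); this is precisely where the scale-invariance of $[\cdot]_{W^{1,n-1}}$ (which turns the boundary smallness hypothesis into genuine smallness after rescaling) meets the $L^\infty$-bound coming from the sphere target. Once both inputs are in place, the boundary compactness theorem and the boundary regularity for constant data reduce the argument to the interior machinery of Theorem~\ref{th:ALs2s}.
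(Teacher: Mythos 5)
Your compactness-by-contradiction argument, driven by the scale-invariance of the $W^{1,n-1}$-seminorm on the $(n-1)$-dimensional flat boundary, is the right approach and matches what the paper invokes (the paper simply cites the analogous $n=3$ argument from~\cite{MMS18}). One small point deserves care: Theorem~\ref{th:bsc}(2) only certifies that the limit $v$ is a minimizer on each $B^+_r$ with $r<1$, not on $B^+_1$ itself, so the hypotheses of Theorem~\ref{th:boundaryregularity-constant} are not literally met on the unit half-ball. This is harmless and is already implicitly absorbed by your remark that $\lambda$ may be shrunk: apply Theorem~\ref{th:boundaryregularity-constant} to the dilated map $v(r\,\cdot)$ on $B^+_1$ for $r$ close to $1$, giving analyticity of $v$ on $[0,\lambda_0 r]\times T_{r/2}$, and then pick $\lambda$ small enough (relative to the $\lambda_0$ of Theorem~\ref{th:boundaryregularity-constant}) that $\overline{B_\lambda\cap\{x_n\ge\lambda/2\}}$ sits inside $[0,\lambda_0 r]\times T_{r/2}$ for some fixed $r<1$; the rest of your contradiction via Theorem~\ref{th:ALs2s} then goes through unchanged.
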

\begin{proof}
The proof is essentially the same as in \cite[Proposition 5.4]{MMS18}.
\end{proof}

By a covering argument, we obtain in particular the following regularity up to the boundary (but of course not including the boundary).
\begin{corollary}\label{co:flatreg}
There exist uniform constants $R_0$ and $\eps$ such that the following holds.
Take any minimizing harmonic map $u:B^+_2 \to \S^2$ and denote the trace of $u$ on $T_2$ by $\varphi$.

If for some $x_0\in T_1$ and some $\rho < R_0$ we have the estimate
\[
 [\varphi ]^{n-1}_{W^{1,n-1}(T_1 \cap B_{\rho}(x_0))} \leq \eps
\]
then $u$ is smooth in $B_{\frac{\lambda}{2} \rho}(x_0)\cap B^+_2$, where $\lambda$ is as in Theorem~\ref{th:boundaryregularity-constant}.
\end{corollary}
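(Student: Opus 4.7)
The plan is to derive Corollary~\ref{co:flatreg} from Proposition~\ref{th:rs:flat} by a translation-plus-rescaling argument combined with a covering. The key invariance is that the boundary seminorm
\[
[\varphi]^{n-1}_{W^{1,n-1}(T\cap B_s(z))} = \int_{T\cap B_s(z)} |\nabla_T\varphi|^{n-1} \dhn
\]
is scale-invariant when integrated over the $(n-1)$-dimensional surface $T$: under a dilation by a factor $r$, the factor $r^{n-1}$ picked up by $|\nabla\varphi|^{n-1}$ is exactly cancelled by the Jacobian $r^{-(n-1)}$ of the surface measure. Consequently, for every $z \in T_2$ and $r > 0$ with $B_r(z)\subset B_\rho(x_0)$, the translated-and-rescaled map $\tilde u(x) := u(z+rx)$ on $B_1^+$ is a minimizing harmonic map whose trace on $T_1$ still satisfies the smallness assumption. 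Applying Proposition~\ref{th:rs:flat} to $\tilde u$ and scaling back yields smoothness of $u$ in $B_{\lambda r}(z) \cap \{x_n > \lambda r/2\}$.

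To cover $B_{\lambda\rho/2}(x_0)\cap B_2^+$, I would write each such point as $y=(y',y_n)$ with $y'\in T_2$ the orthogonal projection of $y$ and $y_n=\dist(y,T_2)>0$, and split into two regimes. If $y_n > \lambda\rho/2$, take $z=x_0$ and $r=\rho$: the resulting smooth set $B_{\lambda\rho}(x_0)\cap\{x_n>\lambda\rho/2\}$ contains $y$. If $0 < y_n \leq \lambda\rho/2$, take $z=y'$ and $r = \tfrac{3 y_n}{2\lambda}$; then $\lambda r / 2 = \tfrac{3 y_n}{4} < y_n < \tfrac{3 y_n}{2} = \lambda r$, so $y$ lies well inside the resulting smooth half-annulus $B_{\lambda r}(y')\cap\{x_n>\lambda r/2\}$.

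The only technical point is verifying the inclusion $B_r(y')\subset B_\rho(x_0)$ in the second regime, so that the $W^{1,n-1}$-smallness really transfers after rescaling. Since $|y'-x_0|\leq|y-x_0|\leq\lambda\rho/2$ and $r\leq \tfrac{3\rho}{4}$ there, the inclusion reduces to the uniform condition $\tfrac{3}{4}+\tfrac{\lambda}{2}\leq 1$, i.e.\ $\lambda\leq \tfrac{1}{2}$, which can be ensured by shrinking the universal constant $\lambda$ from Theorem~\ref{th:boundaryregularity-constant} if needed. The compatibility with the threshold $R_0$ of Proposition~\ref{th:rs:flat} (so that some admissible $\rho_0<R_0$ is available at each applied scale) and the containment $B_r(y')\subset B_2$ are arranged analogously by further reducing $R_0$ in the statement of the corollary. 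I expect this bookkeeping of constants to be the only nontrivial part; the geometric content is simply a covering that exploits the conformal invariance of the $W^{1,n-1}$ seminorm on the boundary.
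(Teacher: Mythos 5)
Your plan---a covering argument based on the scale-invariance of the $W^{1,n-1}$ seminorm on the $(n-1)$-dimensional boundary together with Proposition~\ref{th:rs:flat}---is the same route the paper takes (the paper merely writes ``by a covering argument''). A small observation first: your case $y_n > \lambda\rho/2$ is vacuous, because $x_0 \in T_1$ has $(x_0)_n = 0$, so every $y \in B_{\lambda\rho/2}(x_0)$ automatically satisfies $y_n \le |y - x_0| < \lambda\rho/2$. Your second regime therefore already covers all of $B_{\lambda\rho/2}(x_0) \cap B_2^+$.

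The more substantive issue is a scaling error. After translating to $z$ and rescaling by $r$ so that $\tilde u(x) := u(z + rx)$ is a minimizer on $B_1^+$, Proposition~\ref{th:rs:flat} gives smoothness of $\tilde u$ on $B_{\lambda\rho_0}\cap\{x_n > \lambda\rho_0/2\}$ for some $\rho_0 < R_0$; scaling back yields smoothness of $u$ on $B_{\lambda\rho_0 r}(z)\cap\{x_n > \lambda\rho_0 r/2\}$, which is strictly smaller (by the factor $\rho_0 < R_0 < 1$) than the set $B_{\lambda r}(z)\cap\{x_n > \lambda r/2\}$ you claim. With your choice $r = \tfrac{3 y_n}{2\lambda}$ and $z = y'$, the resulting smooth set contains $y$ only if one can take $\rho_0 > 2/3$, i.e.\ only if the constant $R_0$ of Proposition~\ref{th:rs:flat} exceeds $2/3$---something the proposition does not guarantee. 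The repair is to fix some $\rho_0 < R_0$ once and for all and take the rescaling factor $r$ larger by $1/\rho_0$, so that the relevant \emph{smallness radius} $\rho_0 r$ (rather than $r$) equals roughly $y_n/\lambda$; one must then re-verify that $B_r^+(y') \subset B_2^+$ and that $T_{\rho_0 r}(y') \subset T_1 \cap B_\rho(x_0)$. That check forces both $\lambda$ and the corollary's own threshold $R_0$ to be sufficiently small, and it works out, but your suggested fix of ``reducing $R_0$ in the statement of the corollary'' does not by itself address the problem, since the obstruction involves the fixed constant $R_0$ from Proposition~\ref{th:rs:flat}, not the one named in the corollary.
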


\begin{proof}[Proof of Theorem~\ref{th:r4s}]
The proof of Theorem~\ref{th:r4s} follows now from Corollary~\ref{co:flatreg} by a blowup argument.
\end{proof}

\subsection{Hot spots}
\begin{theorem}[%
{\color{red} R}%
{\color{orange}a}%
{\color{yellow}i}%
{\color{green}n}%
{\color{blue}b}%
{\color{indigo}o}%
{\color{violet}w} theorem]\label{th:AL23}
Let $\Omega \subset \R^n$ be a bounded domain with smooth boundary. There exists a number $r_0 = r_0(\Omega)>0$, with the following property. 

For $x_0 \in \partial\Omega$ let $A_{(r,s)}(x_0):= \{x\in\R^n\colon r<\dist(x,x_0)<s\} $. Suppose also that $u$ is a minimizer in $\Omega$ having boundary map $\vp$. Then, whenever $0<r<r_0$,
\begin{equation}
 r^{2-n}\int_{\Omega \cap A_{(r,2r)}(x_0)}|\nabla u|^2 \dx \le C + Cr^{3-n}\int_{\partial \Omega \cap  A_{ (r/2, 5r/2)}(x_0)} |\nabla_T \varphi|^2 \dhn,
\end{equation}
where $C$ is a constant independent of $\Omega, K, u,$ and $\vp$.
\end{theorem}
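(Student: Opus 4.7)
The plan is to reduce the estimate, by rescaling, to a scale-invariant local statement on a domain of unit size that is close to a half-ball, and then bound the Dirichlet energy on the annulus by applying Theorem~\ref{th:uniformboundedness} on each element of a finite covering.

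Step 1 (Rescaling and flattening). Set $\tilde u(y) := u(x_0 + r y)$ on $\tilde \Omega := \tfrac{1}{r}(\Omega - x_0)$ and let $\tilde \varphi$ denote its trace. Both sides of the claimed inequality are scale-invariant under this transformation, so it suffices to prove
\[
\int_{\tilde \Omega \cap A_{(1,2)}(0)} |\nabla \tilde u|^2 \dx \leq C + C \int_{\partial \tilde \Omega \cap A_{(1/2,5/2)}(0)} |\nabla_T \tilde \varphi|^2 \dhn.
\]
Because $\partial\Omega$ is smooth, choosing $r_0$ small enough (uniformly in $x_0 \in \partial \Omega$) forces $\partial \tilde \Omega \cap B_3(0)$ to be a graph $x_n = h(x')$ over a hyperplane with $\|h\|_{C^2}$ arbitrarily small. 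A bi-Lipschitz change of coordinates $x \mapsto (x', x_n - h(x'))$ flattens the boundary with Jacobian close to the identity, so up to a uniform multiplicative constant we may work on the flat model $\tilde \Omega \cap B_3 = B_3^+$, $\partial \tilde \Omega \cap B_3 = T_3$.

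Step 2 (Covering and local estimates). Choose a finite family $\{B_{1/8}(x_i)\}_{i=1}^N$, with $N=N(n)$, that covers $\overline{A_{(1,2)}\cap B_3^+}$ and such that each center $x_i$ is either \emph{interior} (with $B_{1/4}(x_i)\subset B_3^+$) or \emph{boundary} ($x_i \in T_3$ with $B_{1/4}(x_i)\subset B_3$). For interior balls, the interior half of Theorem~\ref{th:uniformboundedness} (applied with $R=1/4$, $r=1/8$) yields $\int_{B_{1/8}(x_i)} |\nabla \tilde u|^2 \leq C$. For boundary balls, the boundary half of Theorem~\ref{th:uniformboundedness} (applied with $r = 1/8$) gives
\[
\int_{B_{1/8}^+(x_i)} |\nabla \tilde u|^2 \aleq \max\!\left\{\|\nabla_T \tilde u\|_{L^2(T_{1/4}(x_i))},\,1\right\} \leq 1 + \|\nabla_T \tilde u\|_{L^2(T_{1/4}(x_i))}^2,
\]
using $\max\{a,1\} \leq 1+a^2$ and the fact that the dimensional prefactors $r^{2-n}, r^{(3-n)/2}$ are universal constants at this fixed scale.

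Step 3 (Summation). Every boundary ball center $x_i \in T_3$ has $|x_i| \in [1,2]$, so $T_{1/4}(x_i) \subset T_3 \cap A_{(1/2, 5/2)}$. Since $N$ is dimensional and the overlaps of $\{T_{1/4}(x_i)\}$ have uniformly bounded multiplicity, summing the local estimates gives
\[
\int_{A_{(1,2)}\cap B_3^+} |\nabla \tilde u|^2 \leq \sum_{i=1}^N \int_{B_{1/8}(x_i)\cap B_3^+} |\nabla \tilde u|^2 \leq C + C \int_{T_3 \cap A_{(1/2,5/2)}} |\nabla_T \tilde \varphi|^2,
\]
which by scale invariance is exactly the claim. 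The main obstacle is the passage to the flat-boundary model in Step~1: Theorem~\ref{th:uniformboundedness} is formulated for $B^+$ with a flat face $T_1$, and we must verify that the estimates survive a bi-Lipschitz flattening with constants uniform in $x_0 \in \partial \Omega$. This is where the smoothness of $\partial \Omega$ and the smallness of $r_0$ enter decisively: they guarantee $C^1$-closeness of the flattening map to the identity, so that both the Dirichlet integrand and the tangential gradient are distorted only by factors $1+o(1)$ as $r_0 \to 0$.
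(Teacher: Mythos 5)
Your proposal is correct and takes essentially the same approach as the paper: cover the annulus $A_{(r,2r)}(x_0)$ by a dimensionally-bounded number of balls of radius comparable to $r$ not leaving $A_{(r/2,5r/2)}(x_0)$, and apply the uniform boundedness theorem (Theorem~\ref{th:uniformboundedness}) on each, interior or boundary as appropriate. The rescaling and flattening you spell out in Step~1 are exactly the details the paper leaves implicit in its citation of~\cite[Theorem~6.1]{MMS18}.
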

\begin{proof}
We can cover the annulus $A_{(r,2r)}(x_0)$ by balls, of radius comparable to $r$ not leaving $A(r/2,5r/2)$. The number of these balls is a dimensional constant, and we get the desired estimate by the uniform boundedness theorem, Theorem~\ref{th:uniformboundedness}. See \cite[Theorem 6.1]{MMS18}.
\end{proof}

\begin{theorem}[regularity away from ``hot spots'']\label{th:AL24}
For every $N \in \N$ there exists an $\eps_{N}>0$ with the following property.  Suppose $u\in W^{1,2}(B^+_1,\S^2)$ with trace $\varphi$ on $T_1$.
Assume that there are balls $\mathcal B_1,\ldots,\mathcal B_N$ of radius at most $\eps_N$ such that
\[
 \int_{T_{1} \setminus \brac{\mathcal B_1\cup\ldots \mathcal B_N}}|\nabla \varphi|^2\, d\mathcal{H}^{n-1}< \eps_N.
\]
Then $u$ is smooth in
\[
 T_{1/2} \times (\mu,2\mu),
\]
for a uniform constant $\mu > 0$.
\end{theorem}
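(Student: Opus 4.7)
The plan is to argue by contradiction, in the same spirit as the three-dimensional analogue in \cite{MMS18}. Set $\mu := \lambda/4$, where $\lambda > 0$ is the universal constant furnished by Theorem~\ref{th:boundaryregularity-constant}, and fix $N \in \N$. If the desired $\eps_N$ does not exist, I would extract a sequence $\eps_k \searrow 0$ together with minimizers $u_k \in W^{1,2}(B_1^+, \S^2)$ with traces $\varphi_k$, and balls $\mathcal{B}^k_j$ of radius at most $\eps_k$ and centers $x^k_j$, $j=1,\ldots,N$, such that
\[
\int_{T_1 \setminus (\mathcal{B}^k_1 \cup \cdots \cup \mathcal{B}^k_N)} |\nabla \varphi_k|^2 \dhn < \eps_k,
\]
yet every $u_k$ has a singular point $y_k \in T_{1/2} \times (\mu, 2\mu)$. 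Passing to subsequences, $x^k_j \to x_j \in \overline{T_1}$ and $y_k \to y$, where $y$ lies at height in $[\mu, 2\mu]$, hence strictly in the interior of $B_1^+$.

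Next I would set up a compactness argument. For any $z \in \overline{B_1^+}$ at positive distance from $\{x_1, \ldots, x_N\}$, a sufficiently small (half-)ball around $z$ meets $T_1$ only outside $\bigcup_j \mathcal{B}^k_j$ for all large $k$, so the traces $\varphi_k$ are uniformly bounded in $W^{1,2}$ on this flat piece. The interior and boundary versions of Theorem~\ref{th:uniformboundedness}, together with Theorem~\ref{th:AL23} when needed to handle spherical pieces of annular regions, then deliver uniform local $W^{1,2}$ bounds for $u_k$ near $z$. Applying Theorem~\ref{th:bsc}(1) in the interior and Theorem~\ref{th:bsc}(2) on boundary half-balls avoiding the limit hot spots, and diagonalizing over a countable exhaustion of $\overline{B_1^+} \setminus \{x_1, \ldots, x_N\}$, I would extract a subsequence with $u_k \to u$ strongly in $W^{1,2}_{\loc}(\overline{B_1^+} \setminus \{x_1, \ldots, x_N\})$, where $u$ is a minimizing harmonic map on $B_1^+$.

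I would then identify the boundary data of $u$. On every compact subset of $T_1 \setminus \{x_1, \ldots, x_N\}$ the smallness hypothesis forces $\nabla \varphi_k \to 0$ in $L^2$, so the trace $\varphi := u|_{T_1}$ satisfies $\nabla \varphi = 0$ off finitely many points. Because $n \ge 3$, the set $T_1 \setminus \{x_1, \ldots, x_N\}$ is connected, and hence $\varphi$ is a.e.\ constant on $T_1$. Theorem~\ref{th:boundaryregularity-constant} then yields analyticity of $u$ on $[0, \lambda] \times T_{1/2}$, so in particular $u$ is smooth throughout $T_{1/2} \times [\mu, 2\mu] \subset T_{1/2} \times [0, \lambda]$. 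Theorem~\ref{th:ALs2s} forces the limit $y$ of the singular points $y_k$ to be a singular point of $u$, contradicting this smoothness.

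The main obstacle will be the compactness step, since the smallness hypothesis only controls $\varphi_k$ \emph{outside} the bad balls, and these balls may accumulate in the limit. The remedy is to localize on (half-)balls kept at positive distance from $\{x_1, \ldots, x_N\}$ -- where both the local trace energy and the local interior energy of $u_k$ are under control -- and patch the resulting local strong convergences through a diagonal argument; once this is in place the rest of the scheme is formal and parallels the three-dimensional proof in \cite{MMS18}.
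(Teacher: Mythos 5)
Your overall contradiction scheme — rescale, extract a convergent subsequence of minimizers and of bad-ball centers, show the limit has constant trace, invoke boundary regularity for constant data, and derive a contradiction via persistence of singular points — matches the paper's proof in spirit and most details. However, there is a genuine gap at the crucial step.

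You write that after a diagonal argument you obtain a subsequence with $u_k \to u$ strongly in $W^{1,2}_{\loc}(\overline{B^+_1}\setminus\{x_1,\dots,x_N\})$, ``where $u$ is a minimizing harmonic map on $B^+_1$.'' This last clause does not follow from the compactness you have established. The local strong-convergence argument only makes $u$ a minimizer on compact subsets that stay away from the limit hot spots $x_1,\dots,x_N$; a priori $u$ could fail to be $W^{1,2}$ across those boundary points, and even if it is $W^{1,2}$ there is no reason why it should still minimize when competitors are allowed to differ near those points. Without knowing that $u$ is a bona fide minimizer on a full half-ball $B^+_r$ with constant data on $T_r$, Theorem~\ref{th:boundaryregularity-constant} cannot be applied, and your contradiction does not close. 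The paper handles this in two steps that you do not supply: (i) a uniform global $W^{1,2}$ bound for $u_k$ up to and across the hot spots, obtained by summing the Rainbow estimate (Theorem~\ref{th:AL23}) over dyadic annuli centered at $p_i$ (note the exponent manipulation $r_i = \eps_i^{1/(n-2)}$ used to make the tangential energy small at scale $r_i$), which shows $\sup_k \int_{B^+_{3/4}\setminus B_\rho(p_0)} |\nabla u_k|^2 \le C$ \emph{uniformly in $\rho$}, hence $u\in W^{1,2}(B^+_{3/4})$; and (ii) a removability lemma (Lemma~\ref{lem:removesg}) showing that a map which is $W^{1,2}$ on the punctured half-ball and minimizes on every annulus is in fact a minimizer on the whole half-ball. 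You allude to Theorem~\ref{th:AL23} only in passing (``to handle spherical pieces of annular regions''), and you never address removability at all. Adding these two ingredients — the dyadic summation to get a uniform $W^{1,2}$ bound including near the hot spots, and a removable-singularity argument at $x_1,\dots,x_N$ — would close the gap.

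A small cosmetic remark: you set $\mu = \lambda/4$ while the paper takes $\mu = \lambda/2$; either works since both intervals sit inside $[0,\lambda]$, but the choice must be consistent with the half-ball on which boundary regularity is finally applied (the paper ends up with a minimizer on $B^+_{3/4}$, not $B^+_1$).
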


\begin{proof}
For simplicity suppose that $N=1$, $\eps_N = \eps_1=\eps$. Thus we have only one ball $\mathcal B_1 = B_{\eps}(p)$. 

We argue by contradiction. Assume that $u_i \colon B^+_1 \to \S^2$ is a sequence of minimizers with boundary maps $\vp_i$ such that
\[
\int_{T_{1} \backslash B_{\eps_i}(p_i)} |\nabla \varphi_i|^2 \dhn< \eps_i
\] 
for a sequence of balls $B_{\eps_i}(p_i)$ and $\eps_i \xrightarrow{i \to \infty} 0$. Setting $r_i := (\eps_i)^{\frac{1}{n-2}} > \eps_i$ we in particular obtain
\begin{equation}\label{eq:varphitrace}
\brac{r_i}^{3-n} \int_{T_{1} \backslash B_{r_i}(p_i)} |\nabla \varphi_i|^2 \dhn< r_i,
\end{equation}
where $r_i \xrightarrow{i \to \infty} 0$, and up to taking a subsequence, $r_i < 2^{-i}$.

Now, we assume (by contradiction) that each $u_i$ has at least one point singularity $y_i \in T_{1/2} \times (\mu,2\mu)$.

By Theorem~\ref{th:AL23}, for large enough $i$ and for any $r \ge 2^{-i}$
\[
r^{2-n}\int_{B^+_1 \cap A_{(r,2r)}(p_i)}|\nabla u_i|^2 \dx \le C.
\]
Thus, for every $k \geq i$,
\[
\int_{B^+_1 \cap A_{(2^{k},2^{k+1})}(p_i)}|\nabla u_i|^2 \dx \le C\, 2^{k(n-2)}.
\]
Up to taking another subsequence we can assume that $p_i \to p_0$, and that $|p_i-p_0| \leq 2^{-i}$. Then, from the above estimate we have
\[
\int_{B^+_{4/5} \setminus B_{2^{i+13}}(p_0)}|\nabla u_i|^2 \dx \le C\, \sum_{k=-i}^{0} 2^{k(n-2)} \leq C.
\]
In particular by a diagonal argument and the strong convergence of minimizers, Theorem~\ref{th:bsc}, we obtain a minimizer $u \in W^{1,2}(B_{3/4}^+ \backslash B_{r}(p_0))$ for any $r > 0$. Moreover, its trace, which we shall call $\varphi \in W^{1,2}(T_{1} \backslash B_r(p_0),\S^2)$ is the weak limit of $\varphi_i$. Observe that $\varphi$ is constant on $T_{1} \backslash B_r(p_0)$ for any $r > 0$, by \eqref{eq:varphitrace}.

Moreover, by Theorem~\ref{th:ALs2s}, the sequence of singular points $y_i$ can be assumed to converge to a singular point of $u$ which we call $y \in T_{1/2} \times (\mu,2\mu)$.

On the other hand, in view of Lemma~\ref{lem:removesg} below, the singularity $p_0$ is removable, and so $u$ is a minimizing harmonic map in $B_{3/4}^+$. Since $u$ is constant on $T_{3/4}$, if we choose $\mu := \frac{1}{2} \lambda$ where $\lambda$ is from Theorem~\ref{th:boundaryregularity-constant} we find a contradiction.
\end{proof}

To complete the proof of Theorem~\ref{th:AL24}, we need the following removability lemma. 

\begin{lemma}[Removability of points for minimizing harmonic maps]
\label{lem:removesg}
Assume that $u \in W^{1,2}(B_1^+(0)\backslash B_\delta(0),\S^2)$ for any $\delta > 0$ with
\[
\sup_{\delta \in (0,1)} \int_{B^+_1(0) \backslash B_\delta(0)} |\nabla u|^2 \dx < \infty
\]
is a minimizer away from the origin, i.e., assume that for any $\delta > 0$ and any $v \in W^{1,2}(B_1^+(0),\S^2)$ satisfying $v = u$ on $\partial B_1^+(0)$ and $v = u$ on $\partial B_\delta^+(0)$ we have 
\begin{equation}\label{eq:dmin}
\int_{B_1^+(0) \backslash B_{\delta}^+(0)} |\nabla u|^2 \dx \leq \int_{B_1^+(0) \backslash B_{\delta}^+(0)} |\nabla v|^2 \dx.
\end{equation}
Then, $u \in W^{1,2}(B^+_1(0))$ and $u$ a minimizing harmonic map in all of $B_1^+(0)$.
\end{lemma}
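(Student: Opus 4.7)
The plan is two-step: first upgrade $u$ to a genuine $W^{1,2}$-map on the whole half-ball, and then show that any competitor for full minimality on $B_1^+$ can be modified into a competitor for the annular minimality \eqref{eq:dmin}, with the modification having negligible cost as $\delta\to 0$.

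\textbf{Step 1 ($u\in W^{1,2}(B_1^+,\S^2)$).} The uniform energy bound combined with monotone convergence gives $\int_{B_1^+}|\nabla u|^2\dx<\infty$, while $u\in L^\infty$ since $|u|=1$. As $\{0\}$ has zero $W^{1,2}$-capacity in $\R^n$ (recall $n\geq 3$), the distributional gradient of $u$ on $B_1^+$ coincides with the classical one on $B_1^+\setminus\{0\}$, so $u\in W^{1,2}(B_1^+,\S^2)$.

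\textbf{Step 2 (competitor construction).} Fix any $v\in W^{1,2}(B_1^+,\S^2)$ with $v=u$ on $\partial B_1^+$; in particular $v=u$ on $T_1$. For small $\delta>0$ pick a cutoff $\eta_\delta$ with $\eta_\delta\equiv 1$ on $B_\delta$, $\eta_\delta\equiv 0$ outside $B_{2\delta}$, $|\nabla\eta_\delta|\aleq\delta^{-1}$; on the half-annulus $I_\delta:=B_{2\delta}^+\setminus\overline{B_\delta^+}$ define the auxiliary $\R^3$-valued map $\tilde v_\delta:=\eta_\delta u+(1-\eta_\delta)v$. The boundary $\partial I_\delta$ splits into the outer hemisphere $S_{2\delta}^+$ (where $\tilde v_\delta=v\in\S^2$), the inner hemisphere $S_\delta^+$ (where $\tilde v_\delta=u\in\S^2$), and the flat annular piece $T_{2\delta}\setminus T_\delta\subset T_1$ where $u=v$ by the outer Dirichlet condition, so $\tilde v_\delta=u\in\S^2$ there as well. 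Thus Theorem~\ref{th:extensionthm} applies on $I_\delta$ (and rescaling $I_\delta$ to unit size shows its constant is $\delta$-independent), producing $v_\delta\in W^{1,2}(I_\delta,\S^2)$ with the same trace as $\tilde v_\delta$ and $\int_{I_\delta}|\nabla v_\delta|^2\aleq\int_{I_\delta}|\nabla\tilde v_\delta|^2$. Gluing yields a global competitor $w_\delta\in W^{1,2}(B_1^+,\S^2)$: equal to $v$ on $B_1^+\setminus B_{2\delta}^+$, to $v_\delta$ on $I_\delta$, and to $u$ on $B_\delta^+$. By construction $w_\delta=u$ on both $\partial B_1^+$ and $\partial B_\delta^+$, making it admissible in \eqref{eq:dmin}.

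\textbf{Step 3 (passage to the limit).} Applying \eqref{eq:dmin} to $w_\delta$ and using $|\nabla\tilde v_\delta|^2\aleq|\nabla u|^2+|\nabla v|^2+\delta^{-2}|u-v|^2$ gives
\[
\int_{B_1^+\setminus B_\delta^+}|\nabla u|^2\dx
\leq\int_{B_1^+\setminus B_{2\delta}^+}|\nabla v|^2\dx
+C\int_{I_\delta}\brac{|\nabla u|^2+|\nabla v|^2}\dx
+C\delta^{-2}\int_{I_\delta}|u-v|^2\dx.
\]
Add $\int_{B_\delta^+}|\nabla u|^2\dx$ to both sides and let $\delta\to 0$: absolute continuity of the integral kills $\int_{B_\delta^+}|\nabla u|^2$ and $\int_{I_\delta}(|\nabla u|^2+|\nabla v|^2)$; the cutoff cost satisfies $\delta^{-2}\int_{I_\delta}|u-v|^2\aleq\delta^{-2}\cdot|I_\delta|\aleq\delta^{n-2}\to 0$ (using $|u|,|v|\leq 1$ and $n\geq 3$); and $\int_{B_1^+\setminus B_{2\delta}^+}|\nabla v|^2\to\int_{B_1^+}|\nabla v|^2$ by monotone convergence. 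Hence $\int_{B_1^+}|\nabla u|^2\leq\int_{B_1^+}|\nabla v|^2$, so $u$ is minimizing on all of $B_1^+$.

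\textbf{Main obstacle.} The delicate step is arranging that the linear interpolation $\tilde v_\delta$ carries $\S^2$-valued trace on the flat portion of $\partial I_\delta$, which is what makes Theorem~\ref{th:extensionthm} applicable and what forces the $\S^2$-constraint to be preserved by the extension; this relies crucially on the compatibility $u=v$ on $T_1$ inherited from the outer Dirichlet condition. A secondary technical point is the scale-invariance of the extension constant on shrinking half-annuli, which follows immediately by rescaling but is essential for the final limit argument.
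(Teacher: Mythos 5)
Your proof is correct and follows essentially the same route as the paper's, with the paper's explicit cutoff integration-by-parts argument in Step~1 replaced by your equivalent invocation of the zero $W^{1,2}$-capacity of a point (which, in fact, since $0\in\partial B_1^+$ and $B_1^+$ is open, one could even argue directly: every $\varphi\in C_c^\infty(B_1^+)$ vanishes near $0$). Your observation in \emph{Main obstacle} that Theorem~\ref{th:extensionthm} requires the interpolant's trace on the flat piece $T_{2\delta}\setminus T_\delta$ to be $\S^2$-valued, and that this is ensured by the compatibility $u=v$ on $T_1$ coming from the outer Dirichlet condition, is a correct and worthwhile point which the paper's own proof applies without comment.
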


\begin{proof}
Since $u$ and $\partial_i u$ are uniformly in $L^2$ on $B^+_1 \backslash B^+_\delta$ there exists extensions to all of $B^+_1$, which satisfy
\[
\int_{B^+_1} u\, \partial_i \varphi = -\int_{B^+_1} \partial_i u\, \varphi\quad \mbox{for all $\varphi \in C_c^\infty(B^+_1)$ such that $\varphi(x) = 0$ around $x = 0$}.
\]
To show that this also holds for generic $\varphi \in C_c^\infty(B^+_1)$ let $\eta_\delta \in C_c^\infty(B_{2\delta})$ be a cutoff function constantly one around $B_\delta$ with $|\nabla \eta_\delta| \aleq \frac{1}{\delta}$. Then
\[
\begin{split}
 \int_{B^+_1} u\, \partial_i \varphi  =& \int_{B^+_1} u\, \partial_i ((1-\eta_\delta) \varphi ) + \int_{B^+_1} u\, \partial_i (\eta_\delta \varphi )\\
 =& -\int_{B^+_1} \partial_i u\, ((1-\eta_\delta) \varphi ) + \int_{B^+_1} u\, \partial_i (\eta_\delta \varphi )\\
 =& -\int_{B^+_1} \partial_i u\,  \varphi  +\int_{B^+_1} \partial_i u\, \eta_\delta \varphi  + \int_{B^+_1} u\, \partial_i (\eta_\delta \varphi ).\\
\end{split}
 \]
This holds for any $\delta \in (0,1)$ and we observe that by absolute continuity of the integral
\[
 \lim_{\delta \to 0} \int_{B^+_1} \partial_i u\, \eta_\delta \varphi  \aleq \lim_{\delta \to 0} \int_{B^+_{2\delta}} |\partial_i u|= 0
 \]
 and, since $n \geq 3$,
 \[
  \lim_{\delta \to 0} \int_{B^+_1} u\, \partial_i (\eta_\delta \varphi ) \aleq \lim_{\delta \to 0} (1+\frac{1}{\delta}) \int_{B^+_{2\delta}} |u|
   \aleq \lim_{\delta \to 0} (1+\frac{1}{\delta}) \delta^{\frac{n}{2}} \|u\|_{L^2(B^+_{2\delta})} = 0.
 \]
Thus, $u \in W^{1,2}(B^+_1)$.

Now let $w \in W^{1,2}(B_1^+,\S^2)$ with $u \equiv w$ on $\partial B_1^+$ be a competitor. We need to show that 
\begin{equation}\label{eq:min}
\int_{B_1^+} |\nabla u|^2 \dx \leq \int_{B_1^+} |\nabla w|^2 \dx.
\end{equation}
For $\delta > 0$ let $\eta_\delta \in C_c^\infty(B_{2\delta})$ be again the typical cutoff function, $\eta_\delta \equiv 1$ in $B_{\delta}$ and $|\nabla \eta_\delta| \aleq \frac{1}{\delta}$.

We set $\tilde{w}_\delta\in W^{1,2}(B_1^+,\R^3)$ by
\[
\tilde{w}_\delta := (1-\eta_\delta) w + \eta_\delta u,
\]
which satisfies $\tilde{w}_\delta = u$ on $\partial B_1^+$, $\tilde{w}_\delta \equiv u$ in $B_\delta^+$ and $\tilde{w}_\delta \equiv w$ in $B_1^+ \backslash B_{2\delta}$. By the extension property, Theorem~\ref{th:extensionthm}, applied in $B_{2\delta}^+ \backslash B_\delta$ we find $w_\delta \in W^{1,2}(B_1^+,\S^2)$ such that
\[
w_\delta = \begin{cases}
u \quad \mbox{in $B_\delta^+$}\\
w \quad \mbox{in $B_1^+ \backslash B_{2\delta}$}\\
u \quad \mbox{on $\partial B_1$}
\end{cases}
\]
and
\[
\int_{B_{2\delta}^+ \backslash B_\delta} |\nabla w_\delta|^2 \dx \aleq 
\int_{B_{2\delta}^+ \backslash B_\delta} |\nabla \tilde{w}_\delta|^2 \dx.
\]
In particular, $\tilde{w}_\delta$ is a competitor in the sense of \eqref{eq:dmin}, and we have 
\[
\begin{split}
\int_{B_1^+ \setminus B_{\delta}} |\nabla u|^2 \dx &\leq \int_{B_1^+ \backslash B_{\delta}} |\nabla w_\delta|^2 \dx\\
 &= \int_{B_1^+ \backslash B_{2\delta}} |\nabla w_\delta|^2 \dx + \int_{B_{2\delta}^+ \backslash B_{\delta}} |\nabla w_\delta|^2\dx \\
&\leq  \int_{B_1^+ \backslash B_{2\delta}} |\nabla w|^2 \dx + C\int_{B_{2\delta}^+} |\nabla \tilde{w}_\delta|^2 \dx.\\
\end{split}
\]
Since $u$, and $w \in W^{1,2}(\B_1^+)$ using the absolute continuity of the integral we find that
\begin{equation}\label{eq:almminlim}
\int_{B_1^+} |\nabla u|^2 \dx \leq \int_{B_1^+} |\nabla w|^2 \dx + C\, \liminf_{\delta \to 0}\int_{B_{2\delta}^+} |\nabla \tilde{w}_\delta|^2 \dx.
\end{equation}
Now
\[
\int_{B_{2\delta}^+} |\nabla \tilde{w}_\delta|^2 \dx \aleq \frac{1}{\delta^2} \int_{B_{2\delta}^+} |u-v|^2 \dx + \int_{B_{2\delta}^+} |\nabla u|^2 \dx + \int_{B_{2\delta}^+} |\nabla v|^2 \dx.
\]
Observe that we are in dimension $n \geq 3$ and $\S^2$ is compact, so
\[
\frac{1}{\delta^2} \int_{B_{2\delta}^+} |u-v|^2 \dx\aleq \delta.
\]
Thus, using again the absolute continuity of the integral and that $u,w \in W^{1,2}$ we find
\[
\lim_{\delta \to 0} \int_{B_{2\delta}^+} |\nabla \tilde{w}_\delta|^2 \dx = 0.
\]
Plugging this into \eqref{eq:almminlim} we conclude.
\end{proof}

As a corollary of Theorem~\ref{th:AL24} we obtain
\begin{theorem}[boundary regularity with hot spots]
\label{th:hot-spots}
For each bounded smooth domain $\Omega \subset \R^n$, there are small constants $\sigma, \eps, \lambda > 0$, $\Lambda > 1$, ($\sigma$ depending on the geometry of $\Omega$, the others only on the dimension) so that the following statement holds true for any minimizer $u \in W^{1,2}(\Omega,\S^2)$ with trace $\varphi := u \Big |_{\partial \Omega}$.

For any singular point $p \in \sing u$ with $r := \dist(p,\partial \Omega) < \sigma$ and for any ball $B \subset \R^n$ with radius $\lambda r$, we have
\[
r^{3-n}\int_{\partial \Omega \cap \brac{B_{\Lambda r}(p) \setminus B}} |\nabla \vp|^{2} \dd \cH^{n-1}  \geq \eps. 
\]
\end{theorem}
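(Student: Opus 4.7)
The plan is to argue by contradiction combined with a blow-up at a sequence of boundary-close singular points, reducing the configuration to the hypothesis of Theorem~\ref{th:AL24} on a half-space. Let $\mu > 0$ and $\eps^{*} > 0$ denote the constants from Theorem~\ref{th:AL24} corresponding to $N = 1$; by shrinking $\eps^{*}$ if necessary assume $\eps^{*} \leq \mu/2$. I fix the constants in the statement as
\[
 \alpha := \tfrac{3\mu}{2} \in (\mu, 2\mu), \qquad \lambda := \tfrac{\eps^{*}}{\alpha}, \qquad \Lambda := 2 + \tfrac{1}{\alpha},
\]
and choose $\sigma = \sigma(\Omega) > 0$ small enough that the rescalings below converge to the flat half-space.

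Suppose the theorem fails with $\eps = 1/k$. Then for each $k$ there exist a minimizer $u_k$ with trace $\varphi_k$, a singular point $p_k \in \sing u_k$ with $r_k := \dist(p_k, \partial\Omega) < \sigma$, and a ball $B_k \subset \R^n$ of radius $\lambda r_k$ such that
\[
 r_k^{3-n} \int_{\partial\Omega \cap (B_{\Lambda r_k}(p_k) \setminus B_k)} |\nabla \varphi_k|^{2} \dhn < \tfrac{1}{k}.
\]
Passing to a subsequence $r_k \to 0$. Let $q_k \in \partial\Omega$ be the nearest boundary point to $p_k$ and $R_k \in \mathrm{SO}(n)$ a rotation sending $e_n$ to the inner normal to $\partial\Omega$ at $q_k$. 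Define the blow-ups
\[
 \tilde u_k(y) := u_k\bigl(q_k + (r_k/\alpha) R_k y\bigr), \qquad \tilde\Omega_k := (r_k/\alpha)^{-1} R_k^{-1}(\Omega - q_k).
\]
Then $\tilde u_k$ is a minimizer on $\tilde\Omega_k$ with trace $\tilde\varphi_k$, and by smoothness of $\partial\Omega$ together with $r_k \to 0$, the domains $\tilde\Omega_k$ converge locally in Lipschitz norm to $\R^n_+$. The rescaled singular point $\tilde p_k = \alpha e_n$ is independent of $k$ and lies in $T_{1/2} \times (\mu, 2\mu)$, while the rescaled hot-spot ball $\tilde B_k$ has radius $\lambda\alpha = \eps^{*}$.

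By the choice of $\Lambda$, for any $y \in \overline{B_1(0)}$ we have $|y - \tilde p_k| \leq 1 + \alpha \leq \Lambda\alpha$, so $\partial\tilde\Omega_k \cap B_1(0) \subset \partial\tilde\Omega_k \cap B_{\Lambda\alpha}(\tilde p_k)$. Rescaling the hypothesis (the constant factor $\alpha^{3-n}$ being harmless) yields
\[
 \int_{\partial\tilde\Omega_k \cap (B_1(0) \setminus \tilde B_k)} |\nabla\tilde\varphi_k|^{2} \dhn \longrightarrow 0,
\]
so for $k$ large this quantity lies below $\eps^{*}$. At this point Theorem~\ref{th:AL24}, in a version adapted to sequences of minimizers on Lipschitz domains converging to $\R^n_+$, gives that $\tilde u_k$ is smooth in $T_{1/2} \times (\mu, 2\mu)$. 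Since $\tilde p_k = \alpha e_n$ lies in this set and is singular for $\tilde u_k$, this is the desired contradiction.

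The main obstacle is the passage from the flat-boundary version of Theorem~\ref{th:AL24} to its Lipschitz-domain counterpart. The proof of Theorem~\ref{th:AL24} is itself a contradiction/blow-up argument invoking the strong compactness of minimizers (Theorem~\ref{th:bsc}), the rainbow Theorem~\ref{th:AL23}, the removability Lemma~\ref{lem:removesg}, and boundary regularity for constant data (Theorem~\ref{th:boundaryregularity-constant}). Each of these remains available for sequences on converging Lipschitz domains -- the compactness via the non-flat modification alluded to in Remark~\ref{rem:non-flat}, the others by routine adaptation -- so the proof of Theorem~\ref{th:AL24} extends to this setting and supplies the ingredient needed to conclude.
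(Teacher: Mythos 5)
Your proposal follows the same route as the paper's (extremely terse) proof: blow up at the boundary projection of the singular point, rescale the hot-spot ball to a unit-scale setting, and invoke a version of Theorem~\ref{th:AL24} adapted to nearly-flat boundaries, whose proof in turn goes through once one has the compactness of minimizers on converging Lipschitz domains from Remark~\ref{rem:non-flat}. You have filled in the geometric bookkeeping (choice of $\alpha$, $\lambda$, $\Lambda$, the location $\alpha e_n$ of the rescaled singular point, the scaling of the trace gradient) in a way that checks out.

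One technical slip: the step ``Passing to a subsequence $r_k \to 0$'' is not justified. The contradiction sequence only guarantees $r_k < \sigma$, so one may well have $r_k \to r_\infty \in (0,\sigma]$, in which case the rescaled domains $\tilde\Omega_k$ do not converge to $\R^n_+$ but to a fixed rescaled copy of $\Omega$. Fortunately this claim is not needed: what your argument actually uses is that, once $\sigma$ is chosen small relative to the bounded second fundamental form of $\partial\Omega$, every $\tilde\Omega_k$ is uniformly $O(\sigma)$-close to $\R^n_+$ at unit scale, regardless of whether $r_k$ tends to zero. The non-flat version of Theorem~\ref{th:AL24} should therefore be stated for domains within a fixed Lipschitz distance of the half-space (with uniform constants), which is exactly the formulation Remark~\ref{rem:non-flat} supports; then it applies to each $\tilde u_k$ directly. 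Alternatively, split into the cases $r_k \to 0$ and $r_k$ bounded away from zero, extracting a converging limit domain in the second case. Either fix is short and does not affect the rest of your argument.
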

\begin{proof}
In principle, this is a rescaled version of Theorem~\ref{th:AL24}, only with non-flat boundary. By choosing $\sigma > 0$ small enough, we can ensure that after rescaling the balls to unit size, the boundary is arbitrarily close to flat. To consider this more general case, one needs another contradiction argument based on Theorem~\ref{th:bsc} (see Remark \ref{rem:non-flat}). 
\end{proof}

\section{Hardt and Lin's stability of singularities for \texorpdfstring{$n \geq 3$}{n >= 3}}
This chapter is concerned with stability of singularities. By this we mean that if two boundary maps $\vp, \vp' \colon \partial \Omega \to \S^2$ are \emph{close} in the right Sobolev norm, then the singularities of their corresponding minimizers $u, u' \colon \Omega \to \S^2$ are \emph{close} as well. Since minimizers are in general non-unique, the precise statement is a little more subtle -- e.g. by assuming uniqueness a priori. 

In any case, let us discuss the right notions of \emph{closeness}. In dimension $n = 3$, when the singular set consists of finitely many points, Hardt and Lin \cite{HL1989} considered the Lipschitz norm for boundary data, and showed that small perturbations do not change the number of singularities. Moreover, they constructed a bi-Lipschitz diffeomorphism $\eta \colon \Omega \to \Omega$ (close to identity in Lipschitz norm) such that $u$ is close to $u' \circ \eta$ in some $C^\beta$ norm. These results were recently extended to the case of $W^{1,2}$-perturbations of boundary data by Li \cite{Li18}. 

In higher dimension $n \ge 3$, we consider perturbations in the $W^{1,n-1}$ norm. Since the singular set is a rectifiable set of codimension $3$, we prove its stability with respect to Wasserstein metric, see \cite{Villani}, 
\begin{equation}
\label{eq:wasserstein}
d_W(\mu,\nu) = \sup \left \{ \int_{\R^n} h \dd \mu - \int_{\R^n} h \dd \nu \ : \ h \colon \R^n \to \R, \ |h| \le 1, \ |\nabla h| \le 1 \right \},
\end{equation}
i.e., we show that the distance between measures $\cH^{n-3} \llcorner \sing u$ and $\cH^{n-3} \llcorner \sing u'$ is small. Since taking $h \equiv 1$ in the definition yields 
\[
|\mu(\R^n) - \nu(\R^n)| \le d_W(\mu,\nu),
\]
we obtain in particular that the size of the singular set $\cH^{n-3}(\sing u)$ is also stable under $W^{1,n-1}$-perturbations of boundary data. 

\begin{theorem}[stability of singularities]
\label{th:global-stability}
Let $\Omega \subset \R^n$ be a bounded smooth domain, and let $u \in W^{1,2}(\Omega,\S^2)$ be a minimizer with boundary data $\varphi \in W^{1,n-1}(\partial \Omega,\S^2)$. If $u_k$ is a sequence of minimizers with boundary data $\varphi_k$ and 
\begin{equation}\label{eq:thgs:convergence}
u_k \to u \text{ in } W^{1,2}(\Omega), 
\quad 
\varphi_k \to \varphi \text{ in } W^{1,n-1}(\partial \Omega), 
\end{equation}
then 
\[
\H^{n-3} \llcorner \sing u_k 
\xrightarrow{d_W}
\H^{n-3} \llcorner \sing u,
\]
in particular $\H^{n-3}(\sing u_k) \to \H^{n-3}(\sing u)$. 
\end{theorem}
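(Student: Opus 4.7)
The plan is to establish Wasserstein convergence by proving separately the two features that together characterize it on uniformly bounded positive measures: weak-$*$ convergence together with convergence of total masses. Setting $\mu_k := \H^{n-3} \llcorner \sing u_k$ and $\mu := \H^{n-3} \llcorner \sing u$, I would first observe that $\mu_k$ has uniformly bounded total mass by Theorem~\ref{th:NabVal-general-meas-bound} combined with the energy bound of Theorem~\ref{th:uniformboundedness}, and that near the boundary the uniform convergence $\varphi_k \to \varphi$ in $W^{1,n-1}$ together with Theorem~\ref{th:hot-spots} prevents singularities of $u_k$ from concentrating at boundary points where $u$ is regular. Upper semicontinuity of the support, namely $\sing u_k$ lies in the $\varepsilon$-neighborhood of $\sing u$ for $k$ large, then follows from Theorem~\ref{th:ALs2s} applied to the strongly $W^{1,2}$-convergent sequence, combined with the interior regularity Theorem~\ref{th:int-regularity-in-terms-of-bdry} near $\partial\Omega$.

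The essential new ingredient is a local stability lemma at top-dimensional singular points, namely the forward-referenced Lemma~\ref{lem:local-stability}: for each $y \in \sing_* u$, by Corollary~\ref{co:uniquetangent} the unique tangent map at $y$ is $\Psi$ up to an isometry, so for $r$ small the rescaled map $u(y + r\,\cdot)$ is $L^2$-close to $\Psi$ on $B_{10}$. Strong $W^{1,2}$ convergence transfers this closeness to $u_k$ at suitably chosen points $y_k \to y$. Applying Theorem~\ref{th:NabVal-best-approx} then yields smallness of the Jones $\beta$-numbers of $\mu_k$ on $B_r(y_k)$, which by the rectifiable Reifenberg Theorem~\ref{th:NabVal-reifenberg} forces $\mu_k(B_r(y_k)) \geq (1-\varepsilon)\,\omega_{n-3}\,r^{n-3}$. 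Combined with the matching upper bound from Theorem~\ref{th:NabVal-general-meas-bound} and the fact that $\mu(B_r(y))/(\omega_{n-3}r^{n-3}) \to 1$ at $\mu$-a.e.\ point of $\sing_* u$ (recall $\H^{n-3}(S_{n-4}) = 0$ by \eqref{eq:Sjdim}), this yields the local mass matching $\mu_k(B_r(y_k)) \to \mu(B_r(y))$.

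The final step is a Vitali-type covering argument: for every $\delta > 0$, cover a subset of $\sing_* u$ carrying all but $\delta$ of the $\mu$-mass by finitely many disjoint balls $B_{r_i}(y_i)$ on each of which local stability applies. For each ball, choose $y_{i,k} \to y_i$ so that $\mu_k(B_{r_i}(y_{i,k})) \to \mu(B_{r_i}(y_i))$; outside this finite union, $\mu_k$ carries little mass for large $k$ by upper semicontinuity of support and the uniform global bound. Testing against any $1$-Lipschitz $h$ with $|h| \le 1$ and using that $h$ oscillates by at most $2r_i$ on each ball then yields $\bigl|\int h\,d\mu_k - \int h\,d\mu\bigr| \to 0$, establishing $d_W$-convergence and, taking $h \equiv 1$, convergence of total masses.

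The main obstacle is the local stability lemma: this is the unique place where the full quantitative strength of the Naber--Valtorta machinery is essential, together with the tangent-map rigidity from Corollary~\ref{co:uniquetangent}. The remaining steps are softer measure-theoretic compactness arguments that parallel the three-dimensional counting arguments from \cite{MMS18}, with Theorem~\ref{th:NabVal-general-meas-bound} replacing the observation that, in dimension three, the singular set is a finite set of points.
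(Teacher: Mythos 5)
Your overall scaffolding — a finite covering by good balls where both $\sing u$ and $\sing u_k$ have $\H^{n-3}$-density close to $\omega_{n-3}$, bad balls of small total mass controlled via the Naber--Valtorta bound, boundary regularity via Theorem~\ref{th:r4s}, and convergence of singular points via Theorem~\ref{th:ALs2s} — matches the paper's Steps~1--5 closely.

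However, there is a genuine gap in your local stability argument. You assert that smallness of Jones' $\beta$-numbers (from Theorem~\ref{th:NabVal-best-approx}) fed into the Rectifiable Reifenberg Theorem~\ref{th:NabVal-reifenberg} \emph{forces the lower bound} $\mu_k(B_r(y_k)) \geq (1-\eps)\,\omega_{n-3}\,r^{n-3}$. This is not what that theorem gives: as stated, and as used in the paper, Rectifiable Reifenberg only yields the \emph{upper} bound $\mu(B_1) \le (1+\eps)\omega_{n-3}$. The lower bound is a separate matter and cannot be extracted from $\beta$-number control alone — a measure with tiny $\beta$-numbers on all scales could still have very little mass. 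The paper obtains the lower bound via a topological degree argument (Lemma~\ref{lem:mis-forced-sing}, Theorem~\ref{th:mis-all}(2)): the restriction of $u$ to a small $2$-sphere in $L^\perp$ has degree $\pm 1$, which forces the projection of $\sing u$ onto $L$ to cover $L \cap B_{1-\eps}$, and since the projection is $1$-Lipschitz one gets $\H^{n-3}(\sing u \cap B_1) \ge (1-\eps)^{n-3}\omega_{n-3}$. Relatedly, your claim that the density $\mu(B_r(y))/(\omega_{n-3}r^{n-3}) \to 1$ at $\mu$-a.e.\ point ``because $\H^{n-3}(S_{n-4})=0$'' is circular: membership in $\sing_* u$ only fixes the tangent map, not the density of $\sing u$ itself; the density statement is a consequence of the two-sided bound, not an independent fact.

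A smaller but still essential gap is the transfer step from $u$ to $u_k$: you write that strong $W^{1,2}$ convergence ``transfers this closeness to $u_k$ at suitably chosen points $y_k \to y$.'' Producing such points $y_k \in \sing u_k$ with the right density behavior is exactly the content of Proposition~\ref{prop:stability-of-flatness}, and its proof again hinges on the topological degree being preserved under locally uniform convergence away from the singular tube (via \cite[Prop.~4.6]{SU1}), together with the monotonicity formula to control the upper density bound. Without the degree ingredient one cannot even guarantee that $\sing u_k$ is nonempty near $y$. In short, the quantitative Naber--Valtorta machinery you invoke handles only the upper bound side; both the local lower bound and the $u \to u_k$ transfer require the topological flatness apparatus from \cite{Mis18}, which your proposal does not supply.
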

Under the assumption of uniqueness, we obtain immediately Theorem~\ref{th:stabilitynD}.

\begin{proof}[Proof of Theorem~\ref{th:stabilitynD}]
For the sake of contradiction, let $u_k$ be a sequence of minimizers with boundary data $\vp_k$, with $\vp_k \to \vp$ in $W^{1,n-1}(\partial \Omega, \S^2)$. Taking a subsequence, by Theorem~\ref{th:bsc} we may assume that $u_k$ converges in $W^{1,2}(\Omega,\S^2)$ to a minimizer $\overline{u}$ with boundary data $\vp$. By uniqueness, $\overline{u} = u$ and Theorem~\ref{th:global-stability} implies that $\H^{n-3} \llcorner \sing u_k$ tends to $\H^{n-3} \llcorner \sing u$. Thus, we obtain a contradiction for large enough $k$. 
\end{proof}

\subsection{Outline}

In analogy to the original argument of Hardt and Lin \cite{HL1989}, the heart of the argument lies in the special case when $u$ is the tangent map $\Psi$ as in \eqref{eq:def-psi} given by 
\[
\R^3 \times \R^{n-3} \ni (x',x'')
\xmapsto{\quad \Psi \quad}
\frac{x'}{|x'|} \in \S^2.
\]
Establishing a stability result for the singular set (which for $\Psi$ is an $(n-3)$-dimensional plane) requires some care. Here we adopt the notion of $\delta$-flatness introduced in \cite{Mis18}, which combines topological and analytic conditions for a minimizer to be \emph{close} to $\Psi$. In Section~\ref{sec:stab-flatness} we cite the necessary results and also show that the condition for $\delta$-flatness is stable under $W^{1,2}$-perturbations of the minimizer (Proposition~\ref{prop:stability-of-flatness}). 

With this in hand, we are able to modify the original arguments of Naber and Valtorta \cite{NabVal17} and improve on them in the special case of maps into $\S^2$. In result, we obtain the stability result for $\Psi$ mentioned earlier (Lemma \ref{lem:local-stability}). 

Since around $\mathcal{H}^{n-3}$-almost every singular point, any energy minimizer is close to the map $\Psi$ (composed with an isometry), this stability result can be seen as a local case for Theorem~\ref{th:global-stability}. Indeed, in Section~\ref{sec:stab-global} we cover most of the singular set of $u$ by balls on which Lemma~\ref{lem:local-stability} can be applied. An argument based on Proposition~\ref{prop:stability-of-flatness} then shows that the same covering works for both $\sing u$ and $\sing u_k$, and the global estimate follows. 

%By boundary regularity, the singular set of $u$ can be covered with finitely many small balls, so that $u$ is $\delta$-flat in each of them. Then Lemma~\ref{lem:local-stability} can be used to show global estimates for the measure.  

\subsection{Behavior of top-dimensional singularities}
\label{sec:stab-flatness}
This subsection gathers the results of \cite{Mis18}, which allow us to study further the top-dimensional part of the singular set. 

Recall the tangent map $\Psi$ from \eqref{eq:def-psi} and its energy density $\Theta$ from \eqref{eq:def-Theta}, and the rescaled energy $\theta_u$ from \eqref{eq:thetau}. We introduce the following property, which basically says that $u$ is close to $\Psi$ (up to an isometry) on the ball $B_r(x)$. 
\begin{definition}[$\delta$-flatness]
\label{df:mis-flatness}
We say that an energy minimizer $u \colon \Omega \to \S^2$ is $\delta$-flat in the ball $\B_r(x) \subset \Omega$ if 
\begin{enumerate}
\item
%\label{df:flatness-energy}
$x$ is a singular point of $u$ and $\Theta \le \theta_u(x,0) \le \theta_u(x,r) \le \Theta + \delta$, 
\item
%\label{df:flatness-Reifenberg}
for some $(n-3)$-dimensional affine plane $L$ through $x$, $\sing u \cap B_r(x) \subset B_{r/10}(L)$, 
\item
$u$ restricted to $(x+L^\perp) \cap \partial \B_{r/2}(x)$ has degree $\pm 1$ as a map to $\S^2$. 
\end{enumerate}
\end{definition}
Note that this definition is scale-invariant in the following sense: $u$ is $\delta$-flat in $B_r(x)$ if and only if the rescaled map $\overline{u}(y) = u(x+ry)$ is $\delta$-flat in $B_1$. Also note that $u$ is smooth outside the tube around $L$ by and thus the degree is well-defined. 

Definition~\ref{df:mis-flatness} is strongly reminiscent of \cite[Def.~4.3]{Mis18}. There, Reifenberg flattness is additionally assumed, but it follows from 

\begin{lemma}[{{\cite[Lemma~5.1]{Mis18}}}]
\label{lem:mis-forced-sing}
Assume that $\sing u \cap B_r(x) \subset B_{\eps r}(L)$ for some $0 < \eps < \frac 12$ and some $(n-3)$-dimensional plane $L$ through $x$. Moreover, assume that $u$ restricted to $(x+L^\perp) \cap \partial \B_{r/2}(x)$
has degree $\pm 1$ as a map from $\S^2$ to itself. Then 
\[
L \cap B_{(1-\eps)r}(x) \subset \pi_L(\sing u \cap B_r(x)). 
\]
Here and henceforth, $\pi_L$ denotes the nearest-point projection from $\R^n$ onto $L$.
\end{lemma}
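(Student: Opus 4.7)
The plan is topological: fix an arbitrary $z \in L \cap B_{(1-\eps)r}(x)$ and exhibit a singular point of $u$ on the affine $3$-plane $P_z := z + L^\perp$ inside $B_r(x)$. Since $\pi_L(p) = z$ for every $p \in P_z$, this produces the required preimage. Writing $P_x := x + L^\perp$, the reference $2$-sphere is $S_0 := \partial B_{r/2}(x) \cap P_x$, on which $u$ is smooth (the sphere sits at distance $r/2 > \eps r$ from $L$) and on which the degree is $\pm 1$ by hypothesis.

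First, I would pick a target $2$-sphere $S' := \partial B_{\rho'}(z) \cap P_z$ bounding a $3$-ball $D' := \overline{B_{\rho'}(z)} \cap P_z$, with $\rho'$ chosen so that $\eps r < \rho' < \sqrt{r^2 - |z-x|^2}$. Such a $\rho'$ exists because $|z-x| < (1-\eps) r$ yields $\sqrt{r^2 - |z-x|^2} > r\sqrt{2\eps - \eps^2} > \eps r$ as soon as $\eps < 1$. Each point $z + \rho' \omega$ of $S'$ lies in $z + L^\perp$, so the Pythagorean identity gives $\dist(z+\rho'\omega,L) = \rho' > \eps r$ and $|z + \rho'\omega - x|^2 = \rho'^2 + |z-x|^2 < r^2$. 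Hence $S' \subset B_r(x) \setminus B_{\eps r}(L)$ and $u$ is smooth on $S'$.

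Next I would transport the degree from $S_0$ to $S'$ by a homotopy of $2$-spheres remaining inside the smooth region. Parameterising the unit $2$-sphere $\Sigma$ in the $3$-dimensional subspace $L^\perp$, I define
\[
F \colon \Sigma \times [0,1] \to \R^n, \qquad F(\omega,t) := z_t + \rho_t \,\omega,
\]
with $z_t := (1-t)x + t z \in L$ and $\rho_t$ any continuous path joining $\rho_0 = r/2$ to $\rho_1 = \rho'$ inside the interval $\bigl(\eps r,\; r\sqrt{1 - t^2(1-\eps)^2}\bigr)$, which is non-empty for every $t \in [0,1]$ precisely because $\eps < 1$. Since $z_t - x \in L$ is orthogonal to $\rho_t \omega \in L^\perp$, Pythagoras gives $\dist(F(\omega,t), L) = \rho_t > \eps r$ and $|F(\omega,t) - x|^2 = t^2 |z-x|^2 + \rho_t^2 < r^2$, so the image of $F$ lies in $B_r(x) \setminus B_{\eps r}(L)$. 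By the hypothesis $\sing u \cap B_r(x) \subset B_{\eps r}(L)$, the map $u$ is smooth on $F(\Sigma \times [0,1])$, so $u \circ F$ is a continuous homotopy into $\S^2$. The parameterisations $F(\cdot,0)$ and $F(\cdot,1)$ are orientation-preserving diffeomorphisms onto $S_0$ and $S'$, so homotopy invariance of degree yields $|\deg(u|_{S'})| = |\deg(u|_{S_0})| = 1$.

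Finally I would conclude by contradiction. If $\sing u$ did not meet the open $3$-ball $D' \setminus S'$, then $u$ would be continuous on the contractible set $D'$, so $u|_{S'}$ would extend continuously into $\S^2$ from a $3$-ball and therefore be null-homotopic, forcing $\deg(u|_{S'}) = 0$; this contradicts the previous step. Hence there exists $p \in \sing u \cap (D' \setminus S') \subset \sing u \cap B_r(x) \cap P_z$, and such $p$ satisfies $\pi_L(p) = z$. The only delicate part is the second step: arranging the radii $\rho_t$ so that the sweeping sphere both escapes the singular tube $B_{\eps r}(L)$ and remains inside $B_r(x)$ is exactly where the constant $1-\eps$ in the conclusion is pinned down; the rest is standard degree theory together with the Schoen--Uhlenbeck fact that $u$ is smooth off its singular set.
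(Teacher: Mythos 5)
Your argument is correct, and it is the standard degree-transport argument one would expect: slide the $2$-sphere of degree $\pm 1$ from the fiber over $x$ to a small sphere in the fiber over $z$ through the region $B_r(x) \setminus B_{\eps r}(L)$ where $u$ is smooth, then conclude from null-homotopy on a $3$-ball. The paper merely cites this as \cite[Lemma~5.1]{Mis18} without reproducing the proof, so there is no in-paper argument to compare against; your reconstruction (including the interval bookkeeping $\eps r < \rho_t < r\sqrt{1 - t^2(1-\eps)^2}$ that pins down the constant $1-\eps$, and the use of $\eps < \tfrac 12$ to guarantee $r/2$ starts inside that interval at $t=0$) is a faithful and complete version of the intended proof.
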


In particular, it follows from our definition of $\delta$-flatness that $L \cap B_r(x) \subset B_{r/5}(\sing u)$. This allows us to apply the results of \cite{Mis18}.

The first important point is that around each point in top-dimensional part of the singular set, $\sing_* u$, the map $u$ satisfies the $\delta$-flattness property on sufficiently small balls.

\begin{lemma}[{{\cite[Cor~5.4,~Lem~5.8]{Mis18}}}]
\label{lem:mis-some-flatness}
Let $x \in \sing_* u$. Then for each $\delta > 0$ there is $r_0 > 0$ such that $u$ is $\delta$-flat in $B_r(x)$ for all $r \in (0,r_0]$. 
\end{lemma}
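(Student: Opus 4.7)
The plan is to argue by contradiction, exploiting the uniqueness of the tangent map at a point of $\sing_* u$ given by Corollary~\ref{co:uniquetangent}. Suppose that for some $\delta > 0$ there is a sequence of radii $r_i \searrow 0$ such that $u$ fails to be $\delta$-flat in $B_{r_i}(x)$. Consider the rescaled minimizers $u_i(y) := u_{x,r_i}(y) = u(x + r_i y)$, each of which is a minimizing harmonic map on a large ball. Since $\delta$-flatness of $u$ in $B_{r_i}(x)$ is equivalent (by scaling) to $\delta$-flatness of $u_i$ in $B_1$, it suffices to derive a contradiction by showing that $u_i$ must be $\delta$-flat in $B_1$ for all large $i$. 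By Theorem~\ref{th:bsc} and the definition of tangent map, along a subsequence $u_i \to \Phi$ strongly in $W^{1,2}_{\loc}(\R^n,\S^2)$ for some tangent map $\Phi$. By Corollary~\ref{co:uniquetangent}, $\Phi = \Psi \circ R$ for some orthogonal transformation $R$; composing with $R^{-1}$ (which only relabels the plane $L$ that appears below), we may assume $u_i \to \Psi$.

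I would then verify the three conditions of Definition~\ref{df:mis-flatness} for $u_i$ in $B_1$ with $L = \{x'=0\}$, for all sufficiently large $i$. For condition (1), Corollary~\ref{co:uniquetangent} gives $\theta_u(x,0) = \theta_\Psi(0,0) = \Theta$; by the monotonicity formula (Theorem~\ref{th:monotonicityformula}) the function $r \mapsto \theta_u(x,r)$ is nondecreasing with $\theta_u(x,r)\searrow \Theta$ as $r \searrow 0$. Since $\theta_{u_i}(0,0) = \Theta$ and $\theta_{u_i}(0,1) = \theta_u(x,r_i) \to \Theta$, we get $\Theta \le \theta_{u_i}(0,0) \le \theta_{u_i}(0,1) \le \Theta + \delta$ for all large $i$.

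For condition (2), suppose by contradiction that along a further subsequence there are points $y_i \in \sing u_i \cap B_1$ with $\dist(y_i, L) > 1/10$. Passing to a sub-subsequence, $y_i \to y_\infty \in \overline{B_1}$, and Theorem~\ref{th:ALs2s} (singular points converge to singular points) forces $y_\infty \in \sing \Psi = L$, contradicting $\dist(y_i, L) \ge 1/10$. For condition (3), once (2) holds the sphere $S := (L^\perp) \cap \partial B_{1/2}$ is at distance $1/2$ from $\sing u_i$, so by interior $\varepsilon$-regularity the $u_i$ are uniformly smooth in a neighbourhood of $S$; strong $W^{1,2}_{\loc}$ convergence together with these uniform higher derivative bounds yields $C^0$ (in fact smooth) convergence $u_i \to \Psi$ on $S$, hence the degree of $u_i|_S$ equals $\deg \Psi|_S = 1$ for large $i$.

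The main technical obstacle is condition (3): converting strong $W^{1,2}$ convergence of $u_i$ to $\Psi$ into convergence strong enough to preserve the degree. This hinges on showing that $u_i$ is uniformly smooth in a neighbourhood of the 2-sphere $S$, so that strong $W^{1,2}$ convergence upgrades to uniform convergence there; the key inputs are the containment $\sing u_i \cap B_1 \subset B_{1/10}(L)$ from step (2) together with the interior $\varepsilon$-regularity theorem, which guarantees uniform smooth bounds away from the singular set. A secondary point worth emphasizing is that the plane $L$ in Definition~\ref{df:mis-flatness} is allowed to depend on the scale $r$, which is exactly what makes the contradiction argument robust against the possibility that the aligning rotation $R$ depends on the chosen subsequence.
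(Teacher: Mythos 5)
Your blowup/contradiction argument is the natural one and is essentially correct. The paper itself supplies no proof here (it only cites \cite{Mis18}), so a direct comparison is impossible, but the structure you describe --- blow up, reduce to the tangent map $\Psi$ via Corollary~\ref{co:uniquetangent}, then verify each of the three conditions of Definition~\ref{df:mis-flatness} using the monotonicity formula, Theorem~\ref{th:ALs2s} (singular points converge to singular points), and interior $\varepsilon$-regularity together with \cite[Proposition~4.6]{SU1} --- is the expected route. Two remarks. First, a minor imprecision: condition (2) only gives $\sing u_i \cap B_1 \subset B_{1/10}(L)$, so $S = L^\perp \cap \partial B_{1/2}$ is at distance at least $1/2 - 1/10 = 2/5$ (not exactly $1/2$) from $\sing u_i \cap B_1$, and one must also note that $S \subset \partial B_{1/2}$ so the $\varepsilon$-regularity balls around points of $S$ remain inside $B_1$ where condition (2) applies; with this the argument goes through. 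Second, your closing observation --- that Definition~\ref{df:mis-flatness} allows the plane $L$ to depend on the scale, so that the rotation $R$ depending on the chosen subsequence is harmless --- is exactly the point that makes the contradiction scheme work without invoking uniqueness of the tangent map, which is not available for $n>3$.
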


Below we also note various consequences of $\delta$-flatness proved in \cite{Mis18}. For simplicity, we only deal with the unit ball, but one can easily obtain the corresponding statement for any ball using the scale-invariance. 

\begin{theorem}
\label{th:mis-all}
For each $\eps > 0$ there is $\delta > 0$ such that the following holds. If $u$ is $\delta$-flat in $B_1$, then 
\begin{enumerate}

\item for some tangent map of the form $\overline{\Psi} = \Psi \circ \tau$ (with $\Psi$ as in \eqref{eq:def-psi} and some linear isometry $\tau$) we have 
\[
\| u - \overline{\Psi} \|^2_{W^{1,2}(B_1)} \le \eps, 
%\| u - \overline{\Psi} \|^2_{L^2(B_r(x))} + 
%r^2 \| \nabla u - \nabla \overline{\Psi} \|^2_{L^2(B_r(x))}
%\le \eps r^n. 
\]

\item for the $(n-3)$-dimensional linear plane $L' := \sing \overline{\Psi}$, 
\[
\sing u \cap B_1 \subset B_\eps (L')
\quad \text{and} \quad
L' \cap B_{1-\eps} \subset \pi_{L'}(\sing u \cap B_1), 
\]

\item all singular points in $B_{1/2}$ lie in the top-dimensional part $\sing_* u$, and $u$ is $\eps$-flat in each of the balls $B_r(z)$ with $z \in \sing u \cap B_{1/2}$ and $0 < r \le 1/2$. 

\end{enumerate}
\end{theorem}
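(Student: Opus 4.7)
The plan is to argue by contradiction and compactness, treating the three conclusions simultaneously. Suppose the statement fails for some $\eps_0 > 0$; then for each $k$ there is a minimizer $u_k$ on $B_1$ that is $(1/k)$-flat but violates at least one of (1)--(3) with constant $\eps_0$. Condition~(i) of Definition~\ref{df:mis-flatness} gives $\theta_{u_k}(0,1) \le \Theta + 1/k$, so the sequence is uniformly energy-bounded, and Theorem~\ref{th:bsc} produces a subsequence converging strongly in $W^{1,2}_{loc}(B_1)$ to a minimizer $u_\infty$. Moreover (i) forces $\theta_{u_k}(0,0) \to \Theta$ and $\theta_{u_k}(0,1) \to \Theta$, so the monotonicity identity \eqref{eq:monotonicityformula} combined with strong convergence yields $\partial u_\infty/\partial \nu \equiv 0$ on $B_1$; thus $u_\infty$ is $0$-homogeneous with $\theta_{u_\infty}(0,0) = \Theta$.

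The next step is to identify $u_\infty$ with some $\overline{\Psi} = \Psi \circ \tau$. Passing to a further subsequence, the $(n-3)$-planes $L_k$ from (ii) converge (on the Grassmannian) to a plane $L_\infty$ through the origin. By (ii) one has $\sing u_k \cap B_1 \subset B_{1/10}(L_k)$, and Lemma~\ref{lem:mis-forced-sing} gives the reverse inclusion $L_k \cap B_{9/10} \subset \pi_{L_k}(\sing u_k)$. Combining these with Theorem~\ref{th:ALs2s} on convergence of singular points, I obtain $\sing u_\infty \cap B_{9/10} = L_\infty \cap B_{9/10}$. Since $u_\infty$ is $0$-homogeneous, its restriction to $L_\infty^\perp$ is a $0$-homogeneous minimizer $\R^3 \to \S^2$ with an isolated singularity at the origin, so the Brezis--Coron--Lieb classification (Theorem~\ref{th:BCLclassification}) identifies it with $\mathcal{R}(x'/|x'|)$ for a rotation $\mathcal{R}$. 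A standard spine argument (cf.\ \cite[Ch.~3]{Simon1996}), built on the fact that every point of $L_\infty$ achieves the maximal density $\Theta$, then upgrades this to translation invariance along $L_\infty$, giving $u_\infty = \Psi \circ \tau$ for an isometry $\tau$ aligning $L_\infty$ with $\{0\} \times \R^{n-3}$; the degree condition~(iii) passes to the limit on generic transverse $3$-slices and pins down the sign. This contradicts the failure of~(1).

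Conclusion~(2) then follows: the inclusion $\sing u_k \cap B_1 \subset B_\eps(L')$ holds for large $k$ by Theorem~\ref{th:ALs2s} combined with $\sing u_\infty = L_\infty$, and the reverse inclusion $L' \cap B_{1-\eps} \subset \pi_{L'}(\sing u_k)$ is a further application of Lemma~\ref{lem:mis-forced-sing} to $u_k$. For~(3), upper semicontinuity of density gives $\theta_{u_k}(z_k,0) \to \Theta$ for any sequence $z_k \in \sing u_k \cap B_{1/2}$ accumulating at $z_\infty \in L'$; monotonicity then forces every tangent map at $z_k$ to have density $\Theta$, and by Corollary~\ref{co:uniquetangent} any such tangent map lies in $\mathrm{sym}_{n,n-3} \setminus \mathrm{sym}_{n,n-2}$, i.e.\ $z_k \in \sing_* u_k$. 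The $\eps$-flatness on each sub-ball $B_r(z)$ with $z \in \sing u_k \cap B_{1/2}$ and $r \le 1/2$ is then obtained by running the same contradiction/compactness scheme at scale $r$: rescaling $\overline{u}_k(y) = u_k(z_k + r_k y)$ would otherwise produce a violation of the conclusions just established.

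\textbf{Main obstacle.} The hardest step is the identification of $u_\infty$ as $\Psi \circ \tau$: one must simultaneously exploit $0$-homogeneity at the origin, the forced singular set $L_\infty$, the BCL classification on transverse $3$-slices, and the spine argument for translation invariance, then use the degree condition to exclude constant or higher-energy candidates. A secondary subtlety is that the uniform flatness in~(3) is not a direct rescaling of~(1)--(2): one must track that the auxiliary planes $L_r(z)$ also admit convergent subsequences under the rescaling, which is where the scale-invariance of Definition~\ref{df:mis-flatness} and a careful diagonal argument over dyadic scales enter.
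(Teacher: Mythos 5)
Your proposal takes a genuinely different route from the paper: the paper's proof is essentially a citation of \cite[Lem.~5.3, Prop.~5.6, Cor.~5.7]{Mis18} (made applicable via Lemma~\ref{lem:mis-forced-sing}), whereas you attempt a self-contained compactness/contradiction argument. The compactness scheme for conclusions (1) and (2)---strong convergence to a homogeneous limit via monotonicity rigidity, identification of the limit through BCL and a spine argument, then transferring the plane and the forced-singularity inclusion back to $u_k$ via Theorem~\ref{th:ALs2s} and Lemma~\ref{lem:mis-forced-sing}---is in spirit what \cite[Lem.~5.3]{Mis18} does, and is plausible modulo the technical care you correctly flag.

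There is, however, a genuine gap in your treatment of conclusion~(3). You assert that because $\theta_{u_k}(z_k,0) \to \Theta$, ``by Corollary~\ref{co:uniquetangent} any such tangent map lies in $\mathrm{sym}_{n,n-3}\setminus\mathrm{sym}_{n,n-2}$, i.e.\ $z_k \in \sing_* u_k$.'' This uses Corollary~\ref{co:uniquetangent} in the wrong direction: that corollary \emph{assumes} $y \in \sing_* u$ and \emph{concludes} that the tangent map is $\Psi$ up to isometry; it does not say that a singular point whose density is close to (or even equal to) $\Theta$ belongs to $\sing_*$. What you actually need here is a density-gap/rigidity statement: every nonconstant $0$-homogeneous minimizer into $\S^2$ has density $\geq \Theta$, with equality only for $\Psi\circ\tau$, together with a quantitative ``almost equal implies $(n-3)$-symmetric'' version (so that $\theta_{u_k}(z_k,0)$ merely \emph{close} to $\Theta$ is enough). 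This is precisely the content of \cite[Prop.~5.6]{Mis18}, which the paper invokes and your proposal does not prove. There is also a secondary issue you yourself flag: the claim that $\eps$-flatness on every sub-ball $B_r(z)$, $z\in\sing u\cap B_{1/2}$, $r\le 1/2$, ``is obtained by running the same scheme at scale $r$'' conceals a real difficulty. One must control $\theta_u(z,r)-\theta_u(z,0)$ uniformly over all such $(z,r)$ simultaneously from the single assumption of $\delta$-flatness in $B_1$; a scale-by-scale contradiction argument must be organised carefully (e.g.\ via a pinched-density argument as in \cite[Cor.~5.7]{Mis18}) since the subsequence extracted for one scale need not serve another. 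As written, these two steps do not close.
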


\begin{proof}
%First we need to check the missing condition from \cite[Def~4.3]{Mis18}. To this end, we let $L'$ be the $(n-3)$-dimensional plane from Definition \ref{df:mis-flatness} and apply \cite[Lem.~5.1]{Mis18} --- or rather the simplified reasoning preceding the full proof --- to obtain $L' \cap B_{1-\frac{1}{10}} \subset \pi_{L'}(\sing u \cap B_1)$. This is essentialy stronger than the missing condition $L' \cap B_1 \subset B_{\frac{1}{5}}(\sing u)$ appearing in \cite[Def~4.3]{Mis18} (the change from $1/10$ to $1/5$ is non-essential). 
Due to Lemma~\ref{lem:mis-forced-sing}, we may apply the results of \cite{Mis18} directly. 

Points (1) and (2) are essentially the content of \cite[Lem~5.3]{Mis18}, except for the condition $L \cap B_{1-\eps} \subset \pi_L(\sing u \cap B_1)$, which again follows from Lemma~\ref{lem:mis-forced-sing}. Point (3) comes from combining \cite[Prop~5.6]{Mis18} and its corollary \cite[Cor~5.7]{Mis18}. 
\end{proof}

The last ingredient is another consequence of the arguments in \cite{Mis18}. It is to some extent the higher-dimensional analogue of \cite[Theorem 1.8, (2)]{AlmgrenLieb1988} (see \cite[Theorem 4.8 (2)]{MMS18}). 

\begin{proposition}[Stability of $\delta$-flatness]
\label{prop:stability-of-flatness}
For each $\eps > 0$ there is $\delta > 0$ such that the following holds. If $u$ is $\delta$-flat in the ball $B_1$ and $u_k \xrightarrow{k \to \infty} u$ in $W^{1,2}(B_1)$, then for $k$ large enough there is $x_k \in \sing u_k \cap B_\eps$ such that $u_k$ is $\eps$-flat in the ball $B_{1-\eps}(x_k)$. 
\end{proposition}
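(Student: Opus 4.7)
The natural strategy is a compactness-and-contradiction argument in the spirit of the earlier proofs in this section. Suppose the proposition fails for some $\eps_0 > 0$: then one gets $\delta_j \to 0$, minimizers $u^{(j)}$ that are $\delta_j$-flat in $B_1$, and perturbations $u^{(j)}_k \to u^{(j)}$ in $W^{1,2}(B_1)$ for which infinitely many $k$ produce no $\eps_0$-flat singular center in $B_{\eps_0}$ at scale $1-\eps_0$. Diagonally extract $v_j := u^{(j)}_{k_j}$ with $\|v_j - u^{(j)}\|_{W^{1,2}(B_1)} < 1/j$ inheriting the same obstruction. Theorem~\ref{th:mis-all}(1) provides isometries $\tau_j \in O(n)$ with $\|u^{(j)} - \Psi \circ \tau_j\|_{W^{1,2}(B_1)} \to 0$; by compactness of $O(n)$ and postcomposition with a fixed rotation we reduce to the case $u^{(j)} \to \Psi$, and hence $v_j \to \Psi$, strongly in $W^{1,2}(B_1)$.

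To exhibit a candidate center, I observe that $v_j$ cannot be smooth on any fixed ball around $0$ for all large $j$: the uniform $L^\infty$-bound $|v_j|=1$ together with interior elliptic estimates for smooth harmonic maps into $\S^2$ would otherwise yield uniform $C^k$-bounds, and combined with the $W^{1,2}$-convergence would force $\Psi$ itself to be regular at $0$. So there exist $x_j \in \sing v_j$ with $x_j \to 0$, hence $x_j \in B_{\eps_0}$ eventually. Writing $R := 1-\eps_0$, I would verify the three clauses of Definition~\ref{df:mis-flatness} for $v_j$ in $B_R(x_j)$. For the density condition, the strong $W^{1,2}$-convergence and $0$-homogeneity of $\Psi$ give
\[
\theta_{v_j}(x_j,R) = R^{2-n}\int_{B_R(x_j)}|\nabla v_j|^2 \dx \xrightarrow{j\to\infty} R^{2-n}\int_{B_R(0)}|\nabla\Psi|^2 \dx = \Theta,
\]
so $\theta_{v_j}(x_j,R) \le \Theta + \eps_0$ holds eventually, while the matching lower bound $\theta_{v_j}(x_j,0) \ge \Theta$ is the density gap at singular points of $\S^2$-valued minimizers. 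For the singular-set inclusion, set $L_j := x_j + \sing\Psi$; if some $y_j \in \sing v_j \cap B_R(x_j)$ had $\dist(y_j, L_j) \ge R/10$, a subsequential limit $y_j \to y \in \overline{B_1}$ would by Theorem~\ref{th:ALs2s} satisfy $y \in \sing\Psi$ while $\dist(y,\sing\Psi) \ge R/10$, a contradiction. Finally the separating $2$-sphere $S_j := (x_j + L_j^\perp) \cap \partial B_{R/2}(x_j)$ sits at distance at least $R/2 - R/10$ from $\sing v_j$ by the previous point, so interior elliptic regularity in this singularity-free tubular region upgrades the $W^{1,2}$-convergence to $C^1$-convergence, making the degree of $v_j|_{S_j}$ eventually equal to the degree of $\Psi$ on the limiting $2$-sphere, namely $\pm 1$.

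The main obstacles are concentrated in the third step and all revolve around the interplay between analytic convergence and topological data. Controlling the Hausdorff location of $\sing v_j$ relative to $L_j$ hinges crucially on Theorem~\ref{th:ALs2s}; preserving the topological degree requires upgrading $W^{1,2}$-convergence to $C^1$-convergence in a uniform singularity-free tubular neighborhood of $S_j$, which in turn relies on uniform interior regularity for minimizing harmonic maps away from their singular sets. A subtler but standard ingredient is the density lower bound $\theta_{v_j}(x_j,0) \ge \Theta$, which appeals to the density gap for tangent maps into $\S^2$ and ensures that $x_j$ meets the analytic part of Definition~\ref{df:mis-flatness} and not merely the topological part.
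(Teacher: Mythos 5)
Your proposal is broadly in the spirit of the paper's argument (compactness, Theorem~\ref{th:ALs2s}, and a degree argument to force singularities), but it has a genuine gap at the single most delicate point: establishing the existence of a singular point $x_j \in \sing v_j$ near $0$. You claim that if $v_j$ were smooth in a fixed ball $B_\rho(0)$, then the $L^\infty$-bound $|v_j|=1$ together with ``interior elliptic estimates'' would give uniform $C^k$-bounds and force regularity of $\Psi$ at $0$. This is false as stated: minimizing harmonic maps into $\S^2$ do \emph{not} satisfy uniform $C^k$-bounds from $L^\infty$-bounds alone. The $\eps$-regularity theorem gives such bounds only where the scaled energy $\theta_u(y,r)$ is below the regularity threshold, and precisely because $v_j \to \Psi$ with $\theta_\Psi(0,r) = \Theta$ at all scales, the densities $\theta_{v_j}(0,r)$ are near $\Theta$ (well above the threshold) for every fixed $r$. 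Nothing in elliptic theory forbids a sequence of smooth minimizers with gradient blowing up to converge strongly in $W^{1,2}$ to a singular limit; ruling this out requires a topological obstruction. This is exactly what Lemma~\ref{lem:mis-forced-sing} provides, and it is the engine of the paper's proof: after verifying via uniform convergence on the annulus that $u_k$ restricted to $L^\perp \cap \partial B_{1/2}$ still has degree $\pm 1$, the topological forcing lemma produces singular points of $u_k$ projecting onto $L \cap B_{1-2\eps'}$, and in particular plenty of them near the origin. Your write-up eventually invokes degree theory for condition (3), but the same tool is also what must be used to produce $x_j$ in the first place.

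There is also a secondary discrepancy worth flagging. For the lower density bound $\theta_{v_j}(x_j,0) \ge \Theta$, you appeal to a global ``density gap at singular points of $\S^2$-valued minimizers.'' While this is in fact true (by Federer dimension reduction together with Theorem~\ref{th:BCLclassification}), the paper never states it as such and deliberately avoids needing it: having a surfeit of singular points near $L$, the authors can use that $\cH^{n-3}$-a.e.\ singular point lies in $\sing_*$ (the stratification \eqref{eq:Sjdim}) and pick $x_k \in \sing_* u_k$, for which Corollary~\ref{co:uniquetangent} gives the exact identity $\theta_{u_k}(x_k,0) = \Theta$ with no further input. Your verifications of the tube condition via Theorem~\ref{th:ALs2s} and of the degree condition via locally uniform convergence away from the singular set are both correct and parallel the paper; the framing as a contradiction argument versus the paper's direct argument is merely cosmetic. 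The essential issue is the existence of $x_j$, where your argument needs to be replaced by the topological forcing of singularities.
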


\begin{proof}
Choose $\eps'(n,\eps) > 0$ small enough, more precisely such that 
\[
\eps' < \eps/2, 
\quad 
(1-2\eps')^{2-n} (\Theta + \eps/2) \le \Theta + \eps.
\]
By taking $\delta$ small enough, we may assume by Theorem~\ref{th:mis-all} that
\[
\sing u \cap B_1 \subset B_{\eps'/2} (L) 
\]
for some $(n-3)$-dimensional linear plane $L$. 
Since singular points converge again to singular points, Theorem~\ref{th:ALs2s}, we have for all large $k$,
\begin{equation}\label{eq:ukclosetoL}
\sing u_k \cap B_{1-\eps'} \subset B_{\eps'/2} (L) 
\end{equation}
By \cite[Proposition 4.6]{SU1}, we have locally uniform convergence outside the singular set, and thus
\[
u_k \rightrightarrows u \quad \text{ in } B_{1-\eps'} \setminus B_{\eps'/2}(L).
\] 
In particular, $u_k$ and $u$ restricted to $L^\perp \cap \partial B_{1/2}$ have the same homotopy type for large $k$.

By Lemma~\ref{lem:mis-forced-sing}
\[
 L \cap B_{1-2\eps'} \subset  \pi_L(\sing u_k \cap B_{1-\eps'}).
\]
Combined with \eqref{eq:ukclosetoL} this means that $u_k$ has many singular points near $L$. Since $\H^{n-3}$-a.e. singular point lies in $\sing_* u$ (see \eqref{eq:Sjdim}), we find $x_k \in \sing_* u_k$ with $|x_k| \le \frac{1}{2}\eps'$. In particular, we already have $\theta_{u_k}(x_k,0) = \Theta$, by Corollary~\ref{co:uniquetangent}. 

The last condition to show is
$\theta_{u_k}(x_k,1-\eps) \le \Theta + \eps$. By strong convergence, for large enough $k$,
\[
\int_{B_{1-\eps'}} |\nabla u_k|^2 \le \eps/4 + \int_{B_1} |\nabla u|^2.
\]
Thus 
\[
(1-2\eps')^{2-n} \int_{B_{1-2\eps'}(x_k)} |\nabla u_k|^2 
\le (1-2\eps')^{2-n} \left( \eps/4 + \int_{B_1} |\nabla u|^2 \right) 
\le (1-2\eps')^{2-n} (\Theta + \delta + \eps/4), 
\]
which does not exceed $\Theta + \eps$ if only $\delta \le \eps/4$. By the monotonicity formula, we conclude that $\theta_{u_k}(x_k,1-\eps) \le \theta_{u_k}(x_k,1-2\eps') \le \Theta + \eps$ and hence that $u_k$ is $\eps$-flat in the ball $B_{1-\eps}(x_k)$.
\end{proof}

\subsection{Local case}
\label{sec:stab-local}

The lemma below can be thought of as a local version of the stability theorem. It says that perturbing the tangent map $\Psi$ a little does not change the size of the singular set much. 

\begin{lemma}
\label{lem:local-stability}
For each $\eps > 0$ there is $\delta > 0$ such that the following is true. If $u \colon B_{2}(0) \to \S^2$ is energy minimizing and $\delta$-flat in $B_{2}(0)$ (see Definition \ref{df:mis-flatness}), then 
\[
(1-\eps) \omega_{n-3} \le \H^{n-3}(\sing u \cap B_1) \le (1+\eps) \omega_{n-3}. 
\]
Here $\omega_{n-3} = \H^{n-3}(\sing \Psi \cap B_1) $ is the volume of the $(n-3)$-dimensional ball.
\end{lemma}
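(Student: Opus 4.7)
The plan is to handle the lower and upper bounds separately. The lower bound is geometric and comes essentially for free from Theorem~\ref{th:mis-all}(2); the upper bound is the heart of the matter and uses the Naber--Valtorta machinery together with the uniform propagation of $\delta$-flatness from Theorem~\ref{th:mis-all}(3). By the scale invariance noted after Definition~\ref{df:mis-flatness}, $\delta$-flatness in $B_2$ transfers to the unit ball, so in what follows one should think of $u$ as $\delta$-flat on the scale of $B_1$ with controlled parameters.

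For the lower bound, applying Theorem~\ref{th:mis-all}(2) with a parameter $\eps' > 0$ as small as we please (by choosing $\delta$ small), we obtain an $(n-3)$-dimensional affine plane $L'$ with
\[
L' \cap B_{1-\eps'} \subset \pi_{L'}(\sing u \cap B_1).
\]
Since the nearest-point projection $\pi_{L'}$ is $1$-Lipschitz and hence does not increase $\H^{n-3}$-measure,
\[
\H^{n-3}(\sing u \cap B_1) \ge \H^{n-3}(L' \cap B_{1-\eps'}) = (1-\eps')^{n-3} \omega_{n-3} \ge (1-\eps)\omega_{n-3},
\]
for $\eps'$ small.

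For the upper bound, set $\mu := \H^{n-3} \llcorner (\sing u \cap B_1)$; this is a finite Radon measure by Corollary~\ref{co:NabVal-meas-bound}. We aim to apply the rectifiable Reifenberg Theorem~\ref{th:NabVal-reifenberg}, whose hypothesis requires
\[
\int_{B_r(x)} \int_0^r \beta(y,s)^2 \, \frac{ds}{s} \, d\mu(y) \le \delta'' r^{n-3} \qquad \forall \, B_r(x) \subset B_2,
\]
with $\delta''$ arbitrarily small. By Theorem~\ref{th:mis-all}(3), taking $\delta$ small makes $u$ uniformly $\eps_0$-flat on every ball $B_s(y)$ with $y \in \sing u \cap B_1$ and $s$ in the relevant range. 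After rescaling $B_{10s}(y)$ to $B_{10}$, this $\eps_0$-flatness ensures that $u$ is $L^2$-close to some isometric copy of $\Psi$ (hence close to $\mathrm{sym}_{n,0}$) and uniformly $L^2$-far from $\mathrm{sym}_{n,n-2}$ (since $\Psi$ itself lies at a fixed positive distance from $\mathrm{sym}_{n,n-2}$). The $L^2$-best approximation Theorem~\ref{th:NabVal-best-approx} therefore gives
\[
\beta(y,s)^2 \le C s^{-(n-3)} \int_{B_s(y)} \bigl( \theta_u(z, 8s) - \theta_u(z, s) \bigr) \, d\mu(z).
\]
Plugging this into the Reifenberg integrand, swapping the order of integration via Fubini, and using $\mu(B_s(z)) \le C s^{n-3}$ from Corollary~\ref{co:NabVal-meas-bound} yields
\[
\int_{B_r(x)} \int_0^r \beta(y,s)^2 \frac{ds}{s} d\mu(y)
\le C \int_{B_{2r}(x)} \int_0^r \bigl(\theta_u(z, 8s) - \theta_u(z, s)\bigr) \frac{ds}{s} \, d\mu(z).
\]
The inner integral telescopes by the substitution $t = 8s$ and the monotonicity formula (Theorem~\ref{th:monotonicityformula}) to $(\log 8) \bigl(\theta_u(z, 8r) - \theta_u(z, 0)\bigr)$. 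At each singular $z$ in the support, the $\eps_0$-flatness forces this difference to be $\le \eps_0$, so the whole expression is $\le C \eps_0 \, \mu(B_{2r}(x)) \le C \eps_0 \, r^{n-3}$, which falls below $\delta''$ once $\delta$ is small. Theorem~\ref{th:NabVal-reifenberg} then delivers $\mu(B_1) \le (1+\eps)\omega_{n-3}$.

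The main obstacle is ensuring the hypotheses of the $L^2$-best approximation theorem are met uniformly at every singular base point and every scale appearing in the Reifenberg integral; this is exactly the content of the propagation of $\delta$-flatness in Theorem~\ref{th:mis-all}(3), without which we could not bound $\beta$ at every scale simultaneously. A secondary technicality is that Theorem~\ref{th:NabVal-best-approx} is formulated on $B_{10}$, so rescaling $B_{10s}(y)$ to standard size nominally requires $10 s$ to stay within the region of control; this is absorbed by applying the flatness hypothesis on a slightly larger reference ball than $B_2$ (or by a standard dyadic decomposition that reduces the verification of the Reifenberg condition to scales $s$ that are genuinely small).
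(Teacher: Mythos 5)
Your proposal is correct and follows essentially the same route as the paper: the lower bound via the Lipschitz projection onto the plane from Theorem~\ref{th:mis-all}(2), and the upper bound by combining the $L^2$-best approximation estimate, a Fubini swap plus the rough Naber--Valtorta bound, the telescoping of the $\theta$-difference, and finally the rectifiable Reifenberg theorem; the paper likewise absorbs the ``$B_{10}$ vs.\ $B_2$'' mismatch by implicitly rescaling (it works in $B_{40}$ and notes this avoids an extra covering). One tiny imprecision: the inner integral does not telescope \emph{to} $(\log 8)(\theta_u(z,8r)-\theta_u(z,0))$; it equals $\int_r^{8r}(\theta_u(z,t)-\theta_u(z,0))\,dt/t$, which is then \emph{bounded above} by $(\log 8)(\theta_u(z,8r)-\theta_u(z,0))$ using the monotonicity formula, but this does not affect the argument.
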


\begin{proof}
While the ball $B_2$ is indeed enough here, we will work with $B_{40}$ which saves us from an additional covering argument. 

%\textsc{Step 1.} 
The lower bound follows from a simple topological argument. Fix $\eps' = \frac{\eps}{n-2}$, then apply Theorem~\ref{th:mis-all} to find that there is an $(n-3)$-dimensional linear plane $L$ such that 
\[
L \cap B_{1-\eps'} \subset \pi_L( \sing u \cap B_1 ),
\]
provided $\delta$ is small enough. Since the orthogonal projection $\pi_L$ is $1$-Lipschitz, this shows 
\[
\H^{n-3}(\sing u \cap B_1) \ge \H^{n-3}(L \cap B_{1-\eps'}) = (1-\eps')^{n-3} \omega_{n-3} \ge (1-\eps) \omega_{n-3}. 
\]

%\textsc{Step 2.} 
A rough upper bound follows from Naber and Valtorta's work \cite{NabVal17}, namely Corollary~\ref{co:NabVal-meas-bound},
\begin{equation}
\label{eq:weak-meas-bound}
\H^{n-3}(\sing u \cap B_r(z)) \le C(n) r^{n-3}
\end{equation}
for each ball $B_{2r}(z) \subset B_{2}$.

%\textsc{Step 3.} 
To obtain the sharp upper bound, we will follow the general outline of Naber and Valtorta's work \cite[Sec.~1.4]{NabVal17}. When the target manifold is $\S^2$, the original reasoning can be made significantly easier due to topological control of singularities (analyzed in \cite{Mis18}). In particular, we we will be able to apply Rectifiable Reifenberg Theorem~\ref{th:NabVal-reifenberg} to the whole singular set in $B_1$, without decomposing it into many pieces. 

With $\delta_1 > 0$ to be fixed later, by Theorem~\ref{th:mis-all} we can choose $\delta$ small enough so that all singular points in $B_{20}$ lie in the top-dimensional part $\sing_* u$, moreover $u$ is also $\delta_1$-flat in each ball $B_{r}(z)$ with $z \in \sing u \cap B_{20}$ and $0 < r \le 20$. 

We can now apply the $L^2$-best approximation Theorem~\ref{th:NabVal-best-approx} on these balls; for simplicity, we consider the ball $B_{10}$ first. By Theorem~\ref{th:mis-all}, $u$ is $W^{1,2}$-close to a map of the form $\overline{\Psi} = \Psi \circ q$ (with $\Psi$ as in \eqref{eq:def-psi} and some linear isometry $\tau$). Note that $\overline{\Psi}$ lies in $\mathrm{sym}_{n,0}$ and the value 
\[
\eps_0 := \dist_{L^2(B_{10})} (\overline{\Psi}, \ \mathrm{sym}_{n,k+1}) > 0
\]
depends only on the dimension $n$ (not on the choice of $\tau$). Hence, by taking $\delta_1$ small enough we can ensure that 
\begin{align*}
\dist_{L^2(B_{10})} (u, \ \mathrm{sym}_{n,0}  ) & \le \delta, \\
\dist_{L^2(B_{10})} (u, \ \mathrm{sym}_{n,k+1}) & \ge 2\eps_0 
\end{align*}
with $\delta=\delta(\eps_0)$ chosen according to Theorem~\ref{th:NabVal-best-approx}. Then we obtain 
\[
\beta (0,1)^2
\le C(n) \int_{B_1} \left( \theta_u(y,8) - \theta_u(y,1) \right) \dd \mu(y),
\]
where $\mu := \H^{n-3} \llcorner \sing u$ and $\beta = \beta_{\mu,n-3,2}$. Similarly, 
\begin{equation}
\label{eq:beta-estimate}
\beta (z,s)^2
\le C(n) s^{-(n-3)} \int_{B_s(z)} \left( \theta_u(y,8s) - \theta_u(y,s) \right) \dd \mu(y) 
\end{equation}
for each ball $B_s(z) \subset B_2$ with $z \in \sing u$. To see this, one simply needs to consider the rescaled map $\overline{u}(x) = u(z+rx)$ and apply scaling-invariance of $\delta$-flatness and $\beta$-numbers. 

%In the previous step we already noted a weak upper bound on $\mu$, which makes the following direct calculation possible. 

Now we verify the hypotheses of Rectifiable Reifenberg Theorem~\ref{th:NabVal-reifenberg}. Fix a ball $B_r(x) \subset B_2$; we only need to check that 
\begin{equation}
\label{eq:jones-estimate}
\int_{B_r(x)} \int_0^r \beta(z,s)^2 \frac{\dd s}{s} \dd \mu(z) \le \delta_2 r^{n-3} 
\end{equation}
with $\delta_2(\eps) > 0$ chosen according to Theorem~\ref{th:NabVal-reifenberg}, 

First, we integrate the estimate \eqref{eq:beta-estimate} over $B_r(x)$ and exchange the order of summation: 
\begin{align*}
\int_{B_r(x)} \beta(z,s)^2 \dd \mu(z) 
& \lesssim s^{-(n-3)} \int_{B_r(x)} \int_{B_s(z)} (\theta_u(y,8s)-\theta_u(y,s)) \dd \mu(y) \dd \mu(z) \\
& \le s^{-(n-3)} \int_{B_{2r}(x)} \int_{B_s(y)} (\theta_u(y,8s)-\theta_u(y,s)) \dd \mu(z) \dd \mu(y) \\
& \lesssim \int_{B_{2r}(x)} (\theta_u(y,8s)-\theta_u(y,s)) \dd \mu(y)
\end{align*}
Note that in the last step we used the weak upper bound \eqref{eq:weak-meas-bound} on the ball $B_s(y)$. 

When the above is integrated with respect to $s$, we obtain a telescopic sum. In order to estimate it, first recall that $u$ is $\delta_1$-flat in each ball $B_{8s}(y)$ with $y \in \sing u \cap B_{20}$ and $0 < s \le 2$, in particular 
\[
\theta_u(y,8r) - \theta_u(y,0) \le \delta_1 
\]
on the support of $\mu$. Thus, the substitution $s \mapsto 8s$ together with monotone convergence $\theta_u(y,s) \searrow \theta_u(y,0)$ give us 
\begin{align*}
\int_0^r (\theta_u(y,8s)-\theta_u(y,s)) \frac{\dd s}{s} 
& = \int_{r}^{8r} (\theta_u(y,s) - \theta_u(y,0)) \frac{\dd s}{s} \\
& \le \ln(8) \delta_1.
\end{align*}
Now we are ready to combine the above estimates: 
\begin{align*}
\int_{B_r(x)} \int_0^r \beta_{\mu,2}^2(z,s) \frac{\dd s}{s} \dd \mu(z) 
& \lesssim \int_0^r \int_{B_{2r}(x)} (\theta_u(y,8s)-\theta_u(y,s)) \dd \mu(y) \frac{\dd s}{s} \\
& \le \int_{B_{2r}(x)} \ln(8) \delta_1 \dd \mu(y) \\
& \lesssim \delta_1 r^{n-3},
\end{align*}
where we used \eqref{eq:weak-meas-bound} again in the last line. Assuming $\delta_1 \le \delta_2(\eps) / C(n)$, we have verified the assumption \eqref{eq:jones-estimate} and we infer the upper estimate 
\[
\H^{n-3}(\sing u \cap B_1) = \mu(B_1) \le (1+\eps) \omega_{n-3}. 
\] 
%$\mu(B_r(x)) \le (1+\eps)\omega_{n-3} r^{n-3}$ on each ball $B_r(x) \subset B_1$, in particular 

\end{proof}

\subsection{Global case}
\label{sec:stab-global}
The idea of the proof is to cover most of $\sing u$ by good balls, on which $u$ is $\delta$-flat and thus the measure of $\sing u$ is controlled by Lemma \ref{lem:local-stability}. The rest of the singular set is to be covered by bad balls, whose total mass is small. To achieve this, we will need the following simple covering lemma. 

\begin{lemma}
\label{lem:easy-covering}
For $k \geq 0$, let $S \subset \R^n$ be a compact set of finite $\cH^{k}$-measure. 

Let $\cB$ be a family of open balls with the following property:
For all $p \in S$ there exists $r(p) > 0$ such that $B_r(p) \in \cB$ for all $r < r(p)$.

Then, given any $\eps > 0$, $S$ can be covered by the union of families of open balls $\good$, $\bad$, where $\good \subset \cB$ consists of finitely many pairwise disjoint balls and $\bad = B_{r_j}(p_j)$ is a countable family which is small in the sense that 
\begin{equation}
\label{eq:cov:smallness}
\sum_j r_j^{k} \le \eps. 
\end{equation}
\end{lemma}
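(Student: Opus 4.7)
The plan is to split $S$ into two pieces: a portion carrying most of the $\H^{k}$-mass, covered by a finite pairwise disjoint subfamily of $\cB$, and a thin remainder whose small Hausdorff measure translates, via the very definition of $\H^k$, into the summability condition \eqref{eq:cov:smallness}.

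First I would introduce the finite Radon measure $\mu := \H^{k} \llcorner S$ and observe that the family $\mathcal F := \{B_r(p) : p \in S,\ r < r(p)\} \subset \cB$ is a Vitali fine cover of $S$, since for each $p \in S$ it contains balls centred at $p$ of arbitrarily small radius. The Vitali covering theorem for Radon measures then produces a countable, pairwise disjoint subfamily $\{B_{r_i}(p_i)\}_{i=1}^{\infty} \subset \cB$ with
\[
\mu\Bigl(S \setminus \bigcup_{i=1}^{\infty} B_{r_i}(p_i)\Bigr) = 0.
\]
Because the balls are disjoint and $\mu$ is finite, $\sum_{i>N}\mu(B_{r_i}(p_i)) \to 0$ as $N \to \infty$, so I can pick $N$ large enough that the leftover set $S' := S \setminus \bigcup_{i=1}^{N} B_{r_i}(p_i)$ satisfies $\H^{k}(S') \le \eta$ for a small parameter $\eta$ to be fixed below, and declare $\good := \{B_{r_1}(p_1),\dots,B_{r_N}(p_N)\}$.

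Since $S$ is compact and $\bigcup_{i \le N} B_{r_i}(p_i)$ is open, $S'$ is compact. Now I would invoke the very definition of Hausdorff measure: for any preassigned $\eta'>0$ there exists a countable cover $\bad = \{B_{\rho_j}(q_j)\}$ of $S'$ by open balls with
\[
\sum_j \rho_j^{k} \le C_k\bigl(\H^{k}(S') + \eta'\bigr) \le C_k(\eta + \eta'),
\]
where $C_k$ is a dimensional constant accounting for the discrepancy between ball covers and the standard (diameter-based) definition of $\H^{k}$. Choosing $\eta$ and $\eta'$ so that $C_k(\eta+\eta') \le \eps$ delivers \eqref{eq:cov:smallness}, and by construction $\good \cup \bad$ covers $S$.

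The only genuinely non-trivial step is the application of the Vitali covering theorem; once its fine cover hypothesis has been verified from the assumption on $r(p)$, the rest is bookkeeping, with finiteness of $\H^k(S)$ permitting truncation to a finite $\good$ and smallness of $\H^{k}(S')$ feeding directly into the smallness of the bad family.
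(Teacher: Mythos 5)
Your argument is essentially the same as the paper's: apply Vitali's covering theorem for the Radon measure $\mu = \H^k \llcorner S$ to the fine cover coming from $\cB$, truncate to a finite disjoint subfamily $\good$ whose complement has small $\mu$-measure, and then cover the compact remainder by small balls using the definition of Hausdorff measure. The one detail you gloss over is that the Vitali covering theorem for Radon measures produces \emph{closed} balls, whereas $\cB$ consists of open balls; the paper handles this by arranging that the chosen closed balls satisfy $B_{2r_s}(p_s)\in\cB$ and then slightly enlarging only the finitely many good balls to open balls still in $\cB$ and still pairwise disjoint. This is a routine fix and does not affect the substance of your argument.
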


\begin{proof}
One way to construct this covering is by using Vitali's covering theorem for Radon measures, e.g., \cite[Theorem 2.8]{Mat95}. Applying it to the measure $\mu := \cH^{k} \llcorner S$, we obtain a countable family of pairwise disjoint balls $\cA = \left\{ \overline{B_{r_s}(p_s)} \right\}$, covering $\mu$-almost all $S$ and satisfying $B_{2r_s}(p_s) \in \cB$ for each $s$. Since the series $\sum_s \mu(B_{r_s}(p_s))$ converges, we can divide $\cA$ into two subfamilies $\good'$, $\bad'$, where $\good'$ is finite and $\bad'$ is small, i.e., $\mu \left( \bigcup \bad' \right) \le \eps$. To obtain the desired properties, we still need to alter these families a little. 

First, we define $\good$ to be the balls of $\good'$ slightly enlarged to open balls, but still pairwise disjoint and still belonging to $\cB$. 

Now, the remaining part $S \setminus \bigcup \good$ is a compact set and 
\[
\mu \left( S \setminus \bigcup \good \right) \le \mu \left( \bigcup \bad' \right) \le \eps.
\]
By definition of Hausdorff measure, this set can be covered by a countable family of open balls $\bad$ satisfying the smallness condition \eqref{eq:cov:smallness}. 
\end{proof}

\begin{proof}[Proof of Theorem~\ref{th:global-stability}]

Fix $\varepsilon > 0$. For the sake of clarity, we focus on showing that the difference $| \H^{n-3}(\sing u_k) - \H^{n-3}(\sing u) |$ is controlled by $\eps$ for $k$ large enough. The estimate for Wasserstein distance follows the same lines; it is briefly discussed at the end of the proof. 

\textsc{Step 1 (boundary regularity).}
Choose $\varepsilon_0 > 0$ according to the boundary regularity theorem, Theorem~\ref{th:r4s}. Fix $\rho > 0$ such that 
\[
\sup_{x \in \partial \Omega} \int_{B_\rho(x)} |\nabla \varphi|^{n-1} \le \varepsilon_0 / 2.
\]
Then $u$ is smooth in a $\lambda \rho$-neighborhood of $\partial \Omega$. By strong convergence of $\varphi_k$ to $\varphi$ in $W^{1,n-1}(\partial \Omega)$, we may assume w.l.o.g. for all $k \in \N$,
\[
\sup_{k} \sup_{x \in \partial \Omega} \int_{B_\rho(x)} |\nabla \varphi_k|^{n-1} \le \varepsilon_0.
\]
As a consequence, we may assume each $u_k$ is also smooth in the same fixed neighborhood of $\partial \Omega$. 

\textsc{Step 2 (covering the low-dimensional part).}
Recall the stratification, Section~\ref{s:tangentmaps},
\[
S_0 \subset \ldots \subset S_{n-4} \subset S_{n-3} = \sing u,
\]
in which the $k$-th stratum $S_k$ has Hausdorff dimension $k$ or smaller. We will consider separately the set $S_{n-4}$ and the top-dimensional part 
\[
\sing_* u := S_{n-3} \setminus S_{n-4}.
\]
Since $\sing u$ is compact and $\sing_* u$ is an open subset of $\sing u$ (see Theorem~\ref{th:mis-all}), $S_{n-4}$ is also compact. At the same time, it has a uniform distance from $\partial \Omega$ and $\H^{n-3}(S_{n-4})=0$, so it can be covered by a finite family $\bad_1 = \{ B_{r_i}(p_i) \}$ of open balls satisfying the smallness condition \eqref{eq:cov:smallness} 
\[
\sum_i r_i^{n-3} \le \varepsilon 
\]
and such that $B_{2r_i}(p_i) \subset \Omega$ for each $i$. 
% \cite[Cor.~1.5]{Mis18} not necessary

%by uniform boundedness Theorem~\ref{th:uniformboundedness} and Naber-Valtorta's result Theorem~\ref{th:NabVal}, $u$ has the following estimate for the singular set: 
On each such ball Corollary~\ref{co:NabVal-meas-bound} yields $\H^{n-3}(\sing u \cap B_{r_i}(p_i)) \le C r_i^{n-3}$, with $C$ depending only on the dimension $n$. Summing over all balls, we obtain 
\[
\H^{n-3} \left( \sing u \cap \bigcup \bad_1 \right) 
\le C \eps. 
\]
The same estimate holds verbatim for each $u_k$, by the same application of Corollary \ref{co:NabVal-meas-bound}. 
% since Corollary~\ref{co:NabVal-meas-bound} is again applicable.

\textsc{Step 3 (covering the top-dimensional part and estimating $\H^{n-3}(\sing u)$).}
Here, we use the covering lemma (Lemma \ref{lem:easy-covering}) for the set $S := \sing u \setminus \bigcup \bad_1$. Thanks to Step 1, $\sing u$ has positive distance from the boundary, so it is a compact set of finite $\cH^{n-3}$-measure due to Corollary \ref{co:NabVal-meas-bound}. We choose $\cB$ to be 
\[
\cB = \left \{ B_r(p) \colon \quad p \in \sing_* u, \text{$u$ is $\delta$-flat in $B_{41r}(p)$} \right \},
\]
where $\delta(\eps) > 0$ is chosen according to Lemma \ref{lem:local-stability}. Since $S_{n-4}$ is already covered by $\bad_1$, we know that $S \subset \sing_* u$ and hence small enough balls around each point in $S$ lie in $\cB$ by Lemma \ref{lem:mis-some-flatness}. 

Having checked the properties required by Lemma \ref{lem:easy-covering}, we can cover $S$ by the union of a finite disjoint family $\good \subset \cB$ and another countable family $\bad_2$ satisfying \eqref{eq:cov:smallness}. We add the latter to $\bad_1$ to obtain the family of bad balls $\bad := \bad_1 \cup \bad_2$, which still satisfies the smallness condition \eqref{eq:cov:smallness}. 

Repeating the reasoning from Step 2, we have again via Corollary~\ref{co:NabVal-meas-bound},
\begin{equation}
\label{eq:comparison-bad-balls}
\H^{n-3} \left( \sing u \cap \bigcup \bad \right) 
\le 2C \eps, 
\quad  
\H^{n-3} \left( \sing u_k \cap \bigcup \bad \right) 
\le 2C \eps 
\quad \text{for all } k.
\end{equation}

By assumption, the map $u$ is $\delta$-flat in $B_{40r_s}(p_s)$ for each ball $B_{r_s}(p_s) \in \good$. By Lemma~\ref{lem:local-stability}, we now obtain 
\[
(1-\eps) \omega_{n-3} r_s^{n-3} 
\le \H^{n-3}(\sing u \cap B_{r_s}(p_s)) 
\le (1+\eps) \omega_{n-3} r_s^{n-3} 
\]
for each $s$. To finish the proof, we need to show that a similar comparison holds for $u_k$ if $k$ is large.

\textsc{Step 4 (estimating $\H^{n-3}(\sing u_k)$).} Since $u_k \to u$ in $W^{1,2}(\Omega)$ and $\sing u$ is covered by the open families $\good,\bad$, Theorem~\ref{th:ALs2s} (singular points converge to singular points) implies that the same holds for $u_k$ if $k$ is large enough (from now on we assume it is). For bad balls, the rough estimate \eqref{eq:comparison-bad-balls} will be enough, so we focus on good balls. 

By Proposition~\ref{prop:stability-of-flatness}, we can assume (by taking $k$ large and $\delta$ small) that for each $B_{r_s}(p_s) \in \good$ there is $p^k_s \in \sing u_k$ such that $|p^k_s-p_s| \le \eps r_s$ and $u_k$ is $\delta'$-flat in the ball $B_{40(1+\eps)r_s}(p^k_s)$. Here, the value of $\delta'$ is chosen to be $\delta(\eps)$ from Lemma~\ref{lem:local-stability}. 

Applying Lemma~\ref{lem:local-stability} to $u_k$ on balls $B_{(1-\eps)r_s}(p^k_s)$ and $B_{(1-\eps)r}(p^k_s)$, we obtain  
\begin{align*}
(1-\eps)^{n-2} \omega_{n-3} r_s^{n-3} 
& \le \H^{n-3}(\sing u_k \cap B_{(1-\eps)r_s}(p^k_s)) \\
& \le \H^{n-3}(\sing u_k \cap B_{r_s}(p_s)) \\
& \le \H^{n-3}(\sing u_k \cap B_{(1+\eps)r_s}(p^k_s)) \\
& \le (1+\eps)^{n-2} \omega_{n-3} r_s^{n-3}, 
\end{align*}
which is only slightly worse that the estimate for $\H^{n-3}(\sing u)$.

\textsc{Step 4 (comparison).}
Recalling that $\good$ is a disjoint family, we can sum the above estimate over all $s$ to obtain 
\[
(1-\eps)^{n-2} A 
\le \H^{n-3}(\sing u_k \cap \bigcup \good) 
\le (1+\eps)^{n-2} A,  
\]
where $A := \sum_s \omega_{n-3} r_s^{n-3}$. 
%we will show both $\H^{n-3}(\sing u_k)$ and $\H^{n-3}(\sing u_k)$ are close to this value.
Combining it with the estimate for bad balls \eqref{eq:comparison-bad-balls}, we finally obtain 
\[
(1-\eps)^{n-2} A 
\le \H^{n-3}(\sing u_k)
\le (1+\eps)^{n-2} A + 2C \eps.
\]
Exactly the same estimate is true for $u$. Combining these two yields 
\begin{align*}
\big| \H^{n-3}(\sing u_k) - \H^{n-3}(\sing u) \big| 
& \le \left( (1+\eps)^{n-2} - (1-\eps)^{n-2} \right) A + 2C \eps \\
& \le \left( \frac{(1+\eps)^{n-2}}{(1-\eps)^{n-2}} - 1 \right) \H^{n-3}(\sing u) + 2C \eps. 
\end{align*}
Evidently the right-hand side tends to zero when $\eps \to 0$, which ends the proof of stability of $\cH^{n-3}(\sing u)$. 

\textsc{Step 5 (Wasserstein distance estimate).} With just a little bit more care, the Wasserstein distance estimate follows. Let us decompose the measure $\mu := \cH^{n-3} \llcorner \sing u$ into $\mu = \mu_b + \sum_s \mu_s$, where 
\[
\mu_b = \mu \llcorner \left( \bigcup \bad \setminus \bigcup \good \right), \quad \mu_s = \mu \llcorner B_{r_s}(p_s) \quad \text{for each ball } B_{r_s}(p_s) \in \good.
\]
The estimate for $\mu_b$ is simply $d_W(\mu_b, 0) \le \mu \left ( \bigcup \bad \right) \le 2 C \eps$, whereas on each good ball $B_{r_s}(p_s)$ we have the inequalities 
\begin{align*}
\int_{\R^n} h \dd \mu_s - \omega_{n-3} r_s^{n-3} h(p_s) 
& = \int_{B_{r_s}(p_s)} (h - h(p_s)) \dd \mu + (\mu(B_{r_s}(p_s)) - \omega_{n-3} r_s^{n-3}) h(p_s) \\
& \le r_s \mu(B_{r_s}(p_s)) + |\mu(B_{r_s}(p_s)) - \omega_{n-3} r_s^{n-3}| \\
& \le (r_s + 2\eps) \omega_{n-3} r_s^{n-3}. 
\end{align*}
for any function $h \colon \R^n \to \R$ satisfying $|h| \le 1$ and $|\nabla h| \le 1$. Thus $d_W(\mu_k, \omega_{n-3} r_s^{n-3} \delta_{p_s}) \le 3\eps \omega_{n-3} r_s^{n-3}$, if only each radius is smaller than $\eps$. By triangle inequality, $d_W(\mu,\nu) \le 3 \eps A + 2 C \eps$, where $\nu = \sum_s \omega_{n-3} r_s^{n-3} \delta_{p_s}$ is the packing measure associated to $\good$ and once again $A = \nu(\R^n)$. Applying the same reasoning to $u_k$, we conclude as before. 

\end{proof}

\section{Almgren and Lieb's linear law for \texorpdfstring{$n \geq 3$}{n >= 3}: size of the singular set}
Here we obtain following higher-dimensional counterpart for Almgren--Lieb's linear estimate on the number of singularities. 
Let us stress, that the fundamental result that makes such estimates possible is Naber and Valtorta's breakthrough paper \cite{NabVal17}, Corollary~\ref{co:NabVal-meas-bound}.

\begin{theorem}
\label{th:almgren-lieb-high}
 Let $u\in W^{1,2}(\Omega,\S^2)$ be a minimizing map with $u\rvert_{\partial \Omega} = \varphi$, $\varphi \colon \partial \Omega \to \S^2$, where $\Omega\subset \R^n$ is an open bounded domain. Then 
 \begin{equation}\label{eq:mainestimate}
  \H^{n-3}(\sing u) \le C(\Omega) \int_{\partial \Omega} |\nabla \varphi|^{n-1} \dhn.
 \end{equation}
\end{theorem}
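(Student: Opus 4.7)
The approach follows the spirit of \cite{AlmgrenLieb1988,MMS18}, with the crucial new ingredient being the Naber--Valtorta measure bound (Corollary~\ref{co:NabVal-meas-bound}). First I would split $\sing u$ into an \emph{interior part} $S_{int} := \sing u \cap \{\dist(\cdot,\partial\Omega) > \sigma\}$ and a \emph{boundary layer} $S_{bdry} := \sing u \cap \{\dist(\cdot,\partial\Omega) \le \sigma\}$, where $\sigma = \sigma(\Omega)$ is small enough for Theorems~\ref{th:int-regularity-in-terms-of-bdry} and~\ref{th:hot-spots} to apply. For $S_{int}$ I would use a dichotomy: either $\int_{\partial\Omega}|\nabla\vp|^{n-1}\,d\cH^{n-1} < \eps_0$, in which case $S_{int} = \emptyset$ by Theorem~\ref{th:int-regularity-in-terms-of-bdry}, or $\int_{\partial\Omega}|\nabla\vp|^{n-1}\,d\cH^{n-1} \ge \eps_0$, in which case the uniform estimate $\H^{n-3}(S_{int}) \le C(\Omega,\sigma)$ from Corollary~\ref{co:NabVal-meas-bound} (applied to finitely many balls compactly contained in $\Omega$) yields the desired bound after dividing by $\eps_0$. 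The $n=3$ case is already covered by Theorem~\ref{th:ll3D} from \cite{MMS18}, so I focus the remaining argument on $n \ge 4$.

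To control $S_{bdry}$, I would decompose the boundary strip into dyadic annular shells $A_k := \{2^{-(k+1)}\sigma < \dist(\cdot,\partial\Omega) \le 2^{-k}\sigma\}$ and set $r_k := c\cdot 2^{-k}\sigma$ with $c > 0$ chosen small enough that $B_{10 r_k}(p) \subset \Omega$ for any $p \in A_k$. At each level $k$, a maximal-packing argument produces a family $\{p^{(k)}_j\}_j \subset \sing u \cap A_k$ with pairwise distances $\ge 2 r_k$, so the balls $\{B_{r_k}(p^{(k)}_j)\}_j$ are disjoint while $\{B_{2 r_k}(p^{(k)}_j)\}_j$ covers $\sing u \cap A_k$. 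Applying Corollary~\ref{co:NabVal-meas-bound} to each $B_{2r_k}(p^{(k)}_j)$ gives $\H^{n-3}(\sing u \cap B_{2r_k}(p^{(k)}_j)) \le C r_k^{n-3}$, so $\H^{n-3}(\sing u \cap A_k) \le C N_k r_k^{n-3}$ where $N_k$ is the cardinality of the $k$-th family. To estimate $N_k$, I would apply the Hot Spots Theorem~\ref{th:hot-spots} at each $p^{(k)}_j$ (choosing $B$ as an arbitrary $\lambda r_k$-ball) and obtain $\int_{\partial\Omega \cap B_{\Lambda r_k}(p^{(k)}_j)}|\nabla\vp|^2\,d\cH^{n-1} \ge \eps\, r_k^{n-3}$. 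Since the boundary patch has area $\le C r_k^{n-1}$, Hölder's inequality gives
\[
\eps\, r_k^{n-3} \le \int_{\partial\Omega \cap B_{\Lambda r_k}(p^{(k)}_j)}|\nabla\vp|^2\,d\cH^{n-1} \le C r_k^{n-3}\left(\int_{\partial\Omega \cap B_{\Lambda r_k}(p^{(k)}_j)} |\nabla\vp|^{n-1}\,d\cH^{n-1}\right)^{2/(n-1)},
\]
hence each such boundary patch carries at least $c_0 > 0$ of the $W^{1,n-1}$-energy, with $c_0$ depending only on $n$ and $\eps$. Because the centers $\{p^{(k)}_j\}_j$ are $2r_k$-separated, the enlarged balls $\{B_{\Lambda r_k}(p^{(k)}_j)\}_j$ overlap at most $C(n,\Lambda)$ times, and summing the lower bound gives $N_k \le C \int_{\partial\Omega}|\nabla\vp|^{n-1}\,d\cH^{n-1}$.

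The final step is to sum over scales:
\[
\H^{n-3}(S_{bdry}) \le C\sum_{k \ge 0} N_k\, r_k^{n-3} \le C\Big(\sum_{k\ge 0}(2^{-k}\sigma)^{n-3}\Big) \int_{\partial\Omega}|\nabla\vp|^{n-1}\,d\cH^{n-1},
\]
and for $n \ge 4$ the geometric series converges to at most $C(n)\sigma^{n-3}$, which combined with the interior estimate proves the theorem. The \emph{main obstacle} is a scaling mismatch: the Hot Spots theorem yields only a uniform, scale-free lower bound $c_0$ on the $W^{1,n-1}$-energy of each hot spot rather than one scaling as $r_k^{n-3}$; this is precisely why the dyadic decomposition together with a summable geometric exponent $n-3 \ge 1$ is essential. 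The borderline $n = 3$ case (where the analogous series would diverge) requires the more delicate cross-scale disjoint-hot-spot construction of \cite{MMS18}, which must be invoked separately.
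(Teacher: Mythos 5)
Your argument is correct and, in the boundary-layer part, genuinely different from the paper's. The paper handles the boundary singularities by a single application of the Almgren--Lieb covering lemma (Theorem~\ref{th:covering}) to the Vitali family $\{B_{r_j/\lambda}(p_j)\}$ equipped with the measure $\mu = \frac{1}{\eps}|\nabla\vp|^{n-1}\cH^{n-1}\llcorner\partial\Omega$; the ``exclusion-ball'' hypothesis $\mu(B_r(p)\setminus B_{\tau r}(q)) \ge 1$ is what makes that lemma work across all scales simultaneously, and it is verified via Theorem~\ref{th:hot-spots} together with the same H\"older step you use. This one-shot covering lemma handles every $n \ge 3$. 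You instead decompose the boundary strip dyadically in the distance-to-boundary parameter, extract a maximal $2r_k$-separated family at each level, count it via bounded overlap of the enlarged balls $B_{\Lambda r_k}$ (each carrying a scale-free amount $c_0$ of the $W^{1,n-1}$-energy by Hot Spots plus H\"older), and then sum over scales. The trade is clear: your argument is more elementary and transparent, avoiding the exclusion-ball covering lemma entirely, but it relies on the geometric series $\sum_k 2^{-k(n-3)}$ converging and hence only works for $n \ge 4$; the paper's route works uniformly for $n \ge 3$. You correctly flag this and correctly note that $n = 3$ is covered by Theorem~\ref{th:ll3D}. The interior-part dichotomy and the use of Corollary~\ref{co:NabVal-meas-bound} on each ball are the same in both proofs. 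One small remark: in Theorem~\ref{th:hot-spots} the density scale $r$ is $\dist(p,\partial\Omega)$ rather than your packing radius $r_k$; for $p \in A_k$ these differ only by a bounded factor depending on your choice of $c$, so the estimate is unaffected, but it is worth making the constant-chasing explicit in a final write-up.
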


As in the case $n=3$, the study of singularities near the boundary involves the following covering lemma, which we here cite from \cite[Theorem 2.8, 2.9]{AlmgrenLieb1988}. 

\begin{theorem}[Covering lemma]
\label{th:covering}
Let $\mathcal B$ be a family of closed balls in $\R^n$, $\mu$ be a Borel measure over $\R^n$, and let $\tau,\omega \in (0,1)$. Moreover, assume that the following two hypotheses hold: 
\begin{enumerate}
 \item For any two different $B_r(p), B_s(q) \in \mathcal{B}$ we have 
 \[
  |p-q| \ge \omega \min(r,s).
 \]
 \item Suppose that $B_r(p)\in \cB$ and $q\in\R^n$ is an arbitrary point, then 
 \[
  \mu \left( B_r(p) \setminus B_{\tau r}(q) \right) \ge 1.
 \]
\end{enumerate}
Then
\[
\# \text{balls in }\mathcal{B} \le C \mu(\R^n), 
\]
for a constant $C(\omega,\tau,n) > 0$. 
\end{theorem}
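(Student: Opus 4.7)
The plan is to associate to each ball $B = B_r(p) \in \mathcal{B}$ a region of $\mu$-mass at least $1$ whose indicator functions have bounded overlap, and then integrate. The natural choice is the annulus $A_B := B_r(p) \setminus B_{\tau r}(p)$, which captures mass while avoiding the concentric core. Applying hypothesis (2) with the particular choice $q = p$ gives immediately $\mu(A_B) \ge 1$ for every $B \in \mathcal{B}$, so this step is essentially free.

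The core of the argument is to bound pointwise the multiplicity of the annuli $\{A_B\}_{B \in \mathcal{B}}$. Fix any $x \in \R^n$ and let $B_{i} = B_{r_i}(p_i)$, $i=1,\ldots, k$, enumerate those balls of $\mathcal{B}$ for which $x \in A_{B_i}$, i.e.\ $\tau r_i < |x-p_i| < r_i$. I would first observe that all the $r_i$ are comparable: indeed, letting $r_{\max} := \max_i r_i$, the inequality $r_i < |x-p_i|/\tau \le r_{\max}/\tau$ shows $\tau r_{\max} < r_i \le r_{\max}$. In particular every center $p_i$ lies in $B_{r_{\max}}(x)$.

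Next I would invoke the quasi-disjointness hypothesis (1): for $i \ne j$,
\[
|p_i - p_j| \ge \omega \min(r_i, r_j) \ge \omega\tau r_{\max}.
\]
Therefore the concentric balls $B_{\omega\tau r_{\max}/2}(p_i)$ are pairwise disjoint and all contained in $B_{(1+\omega\tau/2)r_{\max}}(x)$. A standard volume-comparison argument then yields $k \le N$ with $N = N(\omega,\tau,n)$ an explicit dimensional constant, say $N = (4/(\omega\tau))^n$.

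With this multiplicity bound $\sum_{B \in \mathcal{B}} \chi_{A_B} \le N$ in hand, integration against $\mu$ concludes the proof:
\[
\#\mathcal{B} \le \sum_{B \in \mathcal{B}} \mu(A_B) = \int_{\R^n} \sum_{B \in \mathcal{B}} \chi_{A_B}(x) \, d\mu(x) \le N \, \mu(\R^n),
\]
which is the desired estimate with $C = N(\omega,\tau,n)$. I do not anticipate any real obstacle; the only nontrivial point is the observation that the annular condition $|x-p_i|\in(\tau r_i, r_i)$ already forces the radii to be comparable, which is what allows the separation from (1) to be converted into a uniform packing bound.
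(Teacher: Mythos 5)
Your argument has a genuine gap at exactly the point you flag as the key step. You claim that if $x$ lies in several annuli $A_{B_i} = B_{r_i}(p_i) \setminus B_{\tau r_i}(p_i)$, then the radii are all comparable, deducing $\tau r_{\max} < r_i \le r_{\max}$. But the chain $r_i < |x-p_i|/\tau \le r_{\max}/\tau$ only yields $r_i < r_{\max}/\tau$, which is weaker than the trivial $r_i \le r_{\max}$ and gives no lower bound at all. In fact comparability fails: take $n=1$, $\tau = \omega = \tfrac12$, and $B_j = B_{4^j}\bigl(\tfrac34\cdot 4^j\bigr)$ for $j=0,\dots,N$. Every pair satisfies the separation hypothesis (1) (since $|p_i-p_j| \ge \tfrac94\cdot 4^{\min(i,j)} \ge \tfrac12\cdot 4^{\min(i,j)}$), yet the origin lies in $A_{B_j}$ for every $j$, and the radii span $N$ orders of magnitude. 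So the multiplicity of your chosen family $\{A_B\}$ at $x=0$ is unbounded and the volume-packing step collapses; the concluding integration then gives nothing.

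The underlying issue is that restricting to $q=p$ discards almost all of hypothesis (2). That hypothesis is stated for an arbitrary $q$ precisely because the excised ball $B_{\tau r}(q)$ can be re-centered: when a large ball $B_{r'}(p')$ nests around a much smaller one $B_r(p)$ (as in the example above), one chooses $q$ near $p$ so that the large core $B_{\tau r'}(q)$ swallows the small annulus, preventing the overlaps from accumulating across scales. Any correct proof must exploit this freedom --- for instance by an adaptive choice of $q$ over a size-ordered processing of the balls, or by a dyadic-scale decomposition combined with such a choice. The paper itself does not reprove the statement; it cites Theorems~2.8--2.9 of \cite{AlmgrenLieb1988}, where this more careful bookkeeping is carried out.
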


\begin{proof}[Proof of Theorem~\ref{th:almgren-lieb-high}]
Choose $\sigma > 0$ (depending on the geometry of $\partial \Omega$) according to Theorems \ref{th:int-regularity-in-terms-of-bdry}, \ref{th:hot-spots}. We first estimate the measure of the set 
\[
 A_1 := \{ p \in \sing u : r(p) \le \sigma \}, 
 \quad \text{where } r(p) = \tfrac 12 \dist(p,\partial \Omega),  
\]
which is covered by balls $B_{r(p)}(p)$. Then choose a Vitali subcovering such that the balls $B_{r_j}(p_j)$ cover $A_1$ and the balls $B_{r_j/5}(p_j)$ are disjoint; let $\cB$ be the family of balls $B_{r_j/\lambda}(p_j)$ with $\lambda$ as in Theorem~\ref{th:hot-spots}. The first condition from Theorem~\ref{th:covering} with $\omega = \lambda/5$ follows: for any two distinct balls in our collection we have 
\[
 |p_i-p_j| \ge \tfrac 15 (r_i+r_j) \ge \tfrac{\lambda}{5} \max(r_i/\lambda,r_j/\lambda).
\]
Now let $\mu$ be the measure 
\[
 \mu = \frac 1 \eps |\nabla \varphi|^{n-1} \, \cH^{n-1} \llcorner \partial \Omega, 
 \quad \text{i.e. } 
 \mu(U) = \frac 1 \eps \int_{\partial \Omega \cap U} |\nabla \varphi|^{n-1} \dd \cH^{n-1},
\]
where $\eps > 0$ is the constant from Theorem~\ref{th:hot-spots}. If we set $\tau = \lambda^2$, then the second condition of Theorem~\ref{th:covering} with $k = n-3$ follows from Theorem~\ref{th:hot-spots} and we infer that 
\[
% \sum_j r_j^{n-3}
\# \cB \le C \int_{\partial \Omega} |\nabla \varphi|^{n-1} \dd \cH^{n-1}. 
\]
On each ball $B_{r_j}(p_j)$ Corollary~\ref{co:NabVal-meas-bound} implies $\H^{n-3}(\sing u \cap B_{r_j}(p_j)) \le C r_j^{n-3} C(\Omega)$. Summing over all balls, we obtain 
\[
\H^{n-3}(A_1) \le C \int_{\partial \Omega} |\nabla \varphi|^{n-1} \dd \cH^{n-1}. 
\]

Next we estimate the set 
\[
 A_2 := \{ p \in \sing u : r(p) \ge \sigma \}.
\]
For each ball $B_\sigma(y)$ with $\dist(y,\partial \Omega) \ge 2 \sigma$ we have a bound $\H^{n-3}(\sing u \cap B_\sigma(y)) \le C(\Omega)$ by Corollary~\ref{co:NabVal-meas-bound}. The set $A_2$ can be covered by finitely many such balls (the number of balls depending only on $\sigma$ and the geometry of $\Omega$), which gives us an estimate 
\[
 \H^{n-3}(A_2) \le C_0. 
\]
Taking $C_0$ as above and $\eps$ as in Theorem~\ref{th:int-regularity-in-terms-of-bdry}, we see that either $\int_{\partial \Omega} |\nabla \varphi|^{n-1} \dhn \le \eps$ and $\H^{n-3}(A_2) = 0$, or 
\[
\H^{n-3}(A_2) \le C_0 \le \frac{C_0}{\eps} \int_{\partial \Omega} |\nabla \varphi|^{n-1} \dd \cH^{n-1},
\]
which ends the proof.
\end{proof}

\bibliographystyle{abbrv}%
\bibliography{bib}%

\end{document}